\subjclass{13F60, 16G20}
\newcommand{\grlf}{\operatorname{Gr_{l.f.}}}
\newcommand{\rk}{\operatorname{\underline{rank}}}
\title[Locally free Caldero--Chapoton functions via reflections]{Locally free Caldero--Chapoton functions via reflections}
\author{Lang Mou}
\address{Lang Mou\newline
Department of Pure Mathematics and Mathematical Statistics,\newline
University of Cambridge, CB3 0WB, United Kingdom}
\email{lmou.math@gmail.com}
\date{}
\begin{document}
\maketitle

\begin{abstract}
We study the reflections of locally free Caldero--Chapoton functions associated to representations of Gei\ss--Leclerc--Schr\"oer's quivers with relations for symmetrizable Cartan matrices. We prove that for rank 2 cluster algebras, non-initial cluster variables are expressed as locally free Caldero--Chapoton functions of locally free indecomposable rigid representations. Our method gives rise to a new proof of the locally free Caldero--Chapoton formulas obtained by Gei\ss--Leclerc--Schr\"oer in Dynkin cases. For general acyclic skew-symmetrizable cluster algebras, we prove the formula for any non-initial cluster variable obtained by almost sink and source mutations.
\end{abstract}

\tableofcontents

\section{Introduction}

Cluster algebras are invented by Fomin and Zelevinsky \cite{fomin2002cluster} in connection with dual canonical bases and total positivity. A cluster algebra $\mathcal A(B)$ associated to a skew-symmetrizable matrix $B$ is a subalgebra of $\mathbb Q(x_1, \dots, x_n)$ generated by a distinguished set of generators called \emph{cluster variables} obtained by certain iterations called \emph{mutations}. A first remarkable feature is that they turn out to be Laurent polynomials with integer coefficients. Much effort has been taken to give formulas or interpretations of these Laurent polynomials since the invention of cluster algebras.

The classification of finite type cluster algebras is identical to the Cartan--Killing classification of finite root systems \cite{fomin2003cluster}. In particular, non-initial cluster variables are naturally in bijection with positive roots of the corresponding root system. Meanwhile, Gabriel's theorem \cite{gabriel1972unzerlegbare} states that the indecomposable representations of a Dynkin quiver are in bijection with positive roots, thus further in bijection with non-initial cluster variables. Caldero and Chapoton \cite{caldero2006cluster} showed that any non-initial cluster variable can be obtained directly from its corresponding quiver representation as the generating function of Euler characteristics of quiver Grassmannians of subrepresentations, which we now call the Caldero--Chapoton function.

Caldero and Keller \cite{caldero2006triangulated} have extended the above correspondence to cluster algebras associated to acyclic quivers, that is, non-initial cluster variables of $\mathcal A(Q)$ are in bijection with real Schur roots in the root system associated to $Q$, and are again equal to the Caldero--Chapoton functions of the corresponding indecomposable rigid representations.

Gei\ss, Leclerc and Schr\"oer \cite{geiss2017quivers} have defined a class of Iwanaga--Gorenstein algebras $H$ associated to acyclic skew-symmetrizable matrices, generalizing the path algebras of acyclic quivers. These algebras are defined over arbitrary fields so certain geometric constructions valid for quivers carry over to them. The authors introduced \emph{locally free Caldero--Chapoton functions} for locally free $H$-modules and showed that in Dynkin cases those of locally free indecomposable rigid modules are exactly non-initial cluster variables \cite{geiss2018quivers}. Their proof however does not explicitly interpret mutations of cluster variables in terms of representations but actually relies on \cite{geiss2016quiversIII} a realization of the positive part of the enveloping algebra of a simple Lie algebra using locally free $H$-modules and a known connection between cluster algebras of Dynkin types and (dual) enveloping algebras \cite{yang2008cluster}.

In this paper, we study the recursion of locally free Caldero--Chapoton functions of modules under reflection functors. These functors, introduced in \cite{geiss2017quivers} for $H$-modules, generalize the classical Bernstein--Gelfand--Ponomarev reflection functors \cite{bernstein1973coxeter} for representations of Dynkin quivers. We show that this recursion coincides with cluster mutations that happen at a sink or source, leading to our main results:
\begin{enumerate}
    \item Non-initial cluster variables of a rank 2 cluster algebra are exactly locally free Caldero--Chapoton functions of locally free indecomposable rigid $H$-modules.
    \item In Dynkin cases, we obtain a new proof of the aforementioned correspondence in \cite{geiss2018quivers} which does not rely on results in \cite{geiss2016quiversIII} and \cite{yang2008cluster}.
    \item In general, any non-initial cluster variable obtained from almost sink and source mutations is expressed as the locally free Caldero--Chapoton function of a unique locally free indecomposable rigid $H$-module.
\end{enumerate}

We next provide a more detailed summary of this paper.

\subsection{Rank 2 cluster algebras}\label{subsection: rank 2}
Let $b$ and $c$ be two non-negative integers. The \emph{cluster algebra} $\mathcal A(b,c)$ is defined to be the subalgebra of $\mathbb Q(x_1, x_2)$ generated by \emph{cluster variables} $\{x_n\mid n\in \mathbb Z\}$ satisfying relations
\[
x_{n-1}x_{n+1} = \begin{cases} 1 + x_n^b \quad & \text{$n$ is odd}\\
1 + x_n^c \quad &\text{$n$ is even}.
\end{cases}
\]
Every cluster variable $x_n$ is viewed as a rational function of $x_1$ and $x_2$. The cluster algebras $\mathcal A(b,c)$ are said to be of rank 2 because the cardinality of each \emph{cluster} $\{x_n, x_{n+1}\}$ is 2.

Let $c_1$ and $c_2$ be two positive integers such that $c_1b = c_2c$. Let $g \coloneqq \gcd(b,c)$. Let $Q$ be the quiver
\[
\begin{tikzcd}
1\arrow[loop left, "\varepsilon_1"]\arrow[rr,out=45, in=135, shift left=.4ex, "\alpha_1"] \arrow[rr, out=25, in=155, "\alpha_2"]\arrow[rr,out=340, in=200,swap,"\alpha_g"] &\vdots & 2\arrow[loop right, "\varepsilon_2"]
\end{tikzcd}.
\]
Following \cite{geiss2017quivers}, we define $H = H(b,c,c_1,c_2)$ to be the path algebra $\mathbb CQ$ modulo the ideal
\[
I \coloneqq \langle \varepsilon_1^{c_1},\ \varepsilon_2^{c_2},\ \varepsilon_2^{b/g}\alpha_k - \alpha_k \varepsilon_1^{c/g}\mid k=1,2,\dots, g \rangle.
\]

Denote by $\rep H$ the category of finitely generated left $H$-modules. For any $M\in \rep H$ and $i\in \{1,2\}$, the subspace $M_i\coloneqq e_iM$ is a finitely generated module over the algebra $H_i\coloneqq e_iHe_i \cong \mathbb C[\varepsilon]/(\varepsilon^{c_i})$. We say that $M\in \rep H$ is \emph{locally free} (l.f. for short) if $M_i$ is a free $H_i$-module for $i=1,2$. For such $M$, we define its \emph{rank vector}
\[
\rk M \coloneqq (m_1, m_2)\in \mathbb N^2
\]
where $m_i$ denotes the rank of $M_i$ as a finitely generated free $H_i$-module. Let $E_1$ (resp. $E_2$) be the locally free module with rank vector $(1,0)$ (resp. $(0,1)$).

To any locally free $M\in \rep H$ with $\rk M = (m_1, m_2)$, we associated a Laurent polynomial
\begin{equation}\label{eq: cc function rank 2}
X_M(x_1, x_2) = x_1^{-m_1}x_2^{-m_2} \sum_{\mathbf r = (r_1, r_2)\in \mathbb N^2} \chi(\grlf(\mathbf r, M)) x_1^{b(m_2-r_2)} x_2^{cr_1} \in \mathbb Z[x_1^\pm, x_2^\pm],
\end{equation}
where $\grlf(\mathrm r, M)$ is the \emph{locally free quiver Grassmannian} (see \Cref{def: locally free grassmannian}) which is a quasi-projective complex variety parametrizing locally free submodules of $M$ with rank vector $\mathbf r$, and $\chi(\cdot)$ denotes the Euler characteristic in complex analytic topology. The Laurent polynomial $X_M$ is the \emph{locally free Caldero--Chapoton function} associated to $M$.

Our first main result is
\begin{theorem}[{\Cref{thm: main theorem rank 2}}]\label{thm: intro main theorem rank 2}
For $bc\geq 4$, there is a class of locally free indecomposable rigid $H$-modules $M(n)$ parametrized by $\{n\in \mathbb Z\mid \text{$n\leq 0$ or $n\geq 3$} \}$ such that 
\[
    X_{M(n)}(x_1, x_2) = x_n.
\]
In fact, this equality gives a bijection between all locally free indecomposable rigid $H$-modules (up to isomorphism) and non-initial cluster variables of $\mathcal A(b,c)$.
\end{theorem}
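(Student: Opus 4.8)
The plan is to generate all non-initial cluster variables of $\mathcal A(b,c)$ by iterating the reflection functors of \cite{geiss2017quivers} from the two ``generalised simple'' modules $E_1,E_2$, checking that each reflection step realises a cluster mutation on the level of Caldero--Chapoton functions while preserving local freeness, indecomposability and rigidity. One begins by evaluating \eqref{eq: cc function rank 2} on $E_1$ and $E_2$: since the only locally free submodules of $E_i$ are $0$ and $E_i$, the sum has two terms and yields
\[
X_{E_1}=x_1^{-1}(1+x_2^{c})=x_3,\qquad X_{E_2}=x_2^{-1}(1+x_1^{b})=x_0,
\]
the last equalities being the defining exchange relations of $\mathcal A(b,c)$. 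One also records, from the general theory of $H$, that $E_1,E_2$ are locally free, indecomposable and rigid, and sets $M(3):=E_1$, $M(0):=E_2$.

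\textbf{The reflection recursion (the crux).} Let $\Sigma_i^{\pm}$ be the reflection functors of \cite{geiss2017quivers}; in the rank-$2$ case they exchange $\rep H$ with the module category $\rep H'$ of the opposite-orientation algebra $H'$, so two of them compose to an endofunctor of $\rep H$. The key lemma I would prove is: if $M$ is locally free with no direct summand isomorphic to $E_i$, then $\Sigma_i^{\pm}M$ is again locally free, $\rk\Sigma_i^{\pm}M=s_i(\rk M)$ for the reflection $s_i$ of the root system of $\left(\begin{smallmatrix}2&-b\\-c&2\end{smallmatrix}\right)$, $\Sigma_i^{\pm}M$ is indecomposable (resp.\ rigid) iff $M$ is, and
\[
X_{\Sigma_i^{\pm}M}=\mu_i\bigl(X_M\bigr),
\]
where $\mu_i$ denotes the cluster mutation at vertex $i$ of the ambient field. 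Local freeness, indecomposability and rigidity are variants of known properties of $\Sigma_i^{\pm}$ — rigidity, for instance, because $\Sigma_i^{\pm}$ restrict to mutually inverse equivalences between the full subcategories of $\rep H$ and $\rep H'$ of modules without an $E_i$-summand, and the rank formula is immediate from the construction. The identity $X_{\Sigma_i^{\pm}M}=\mu_i(X_M)$ is the new input and, I expect, the main obstacle: one must compare the locally free quiver Grassmannians of $M$ and $\Sigma_i^{\pm}M$ rank by rank. Since on the relevant locus the reflection functor is given by a (co)kernel construction, the plan is to exhibit $\grlf(\mathbf r,M)$ and $\grlf(\mathbf r',\Sigma_i^{\pm}M)$ as related by an affine-bundle morphism (or a matching stratification), so that their Euler characteristics coincide; the delicate part is lining this up with the monomial weights $x_1^{b(m_2-r_2)}x_2^{cr_1}$ in \eqref{eq: cc function rank 2}, which requires careful bookkeeping of the vertex-$i$ part of a submodule, i.e.\ of ranks over $H_i$ versus $\mathbb C$-dimensions.

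\textbf{Building the family.} Granting the lemma, I define $M(n)$ for $n\le 0$ and $n\ge 3$ by recursion: starting from $M(3)=E_1$ and $M(0)=E_2$, each application of a reflection functor at the appropriate vertex (in rank $2$ every vertex is a sink or a source, so every mutation is realised) sends the module realising $x_n$ to one realising a neighbouring cluster variable; iterating and returning to $\rep H$ via the identification above produces a locally free indecomposable rigid $H$-module $M(n)$ with $X_{M(n)}=x_n$, by induction on $|n|$ using the base cases and the lemma. Because $bc\ge 4$, no $M(n)$ is ever isomorphic to $E_1$ or $E_2$, so all the reflections are defined and none of them annihilates a module.

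\textbf{The bijection.} Finally, one checks that $M\mapsto X_M$ restricts to a bijection from isomorphism classes of locally free indecomposable rigid $H$-modules onto non-initial cluster variables. The $M(n)$ are pairwise non-isomorphic because $\rk M(n)$ ranges over the positive real roots of $\left(\begin{smallmatrix}2&-b\\-c&2\end{smallmatrix}\right)$ (the vectors $s_{i_1}\cdots s_{i_k}(\alpha_{i_{k+1}})$), which are pairwise distinct precisely when $bc\ge 4$; likewise the $x_n$ with $n\le 0$ or $n\ge 3$ are pairwise distinct. Conversely, any locally free indecomposable rigid $N$ is some $M(n)$: if $N\cong E_1$ or $E_2$ we are done, and otherwise $\rk N$ is a positive real root $\ne\alpha_1,\alpha_2$, so $s_i(\rk N)$ has strictly smaller height for a suitable $i$, whence $\Sigma_i^{\pm}N$ is locally free, indecomposable, rigid and of strictly smaller total rank; induction on $m_1+m_2$ then reduces $N$ to a base case. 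For this reduction to identify $N$ with the correct $M(n)$ one needs the remaining input that a locally free indecomposable rigid module is determined up to isomorphism by its rank vector; this is the second place that requires genuine care, and I would derive it from rigidity via the Euler form, as for exceptional modules over hereditary algebras.
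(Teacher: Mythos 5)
Your proposal follows the same strategy as the paper: base cases $E_1, E_2$; the key recursion $X_{F_k^{\pm}M} = \mu_k(X_M)$ under reflection (proved in the paper's Proposition 4.6 via a stratification of the locally free quiver Grassmannians and a fiber Euler-characteristic computation, exactly the route you sketch); iterate to build $M(n)$; and close the bijection using the classification of locally free indecomposable rigid modules by their rank vectors. The one place you diverge slightly is the last step: you propose a descending induction on total rank, applying a reflection that decreases height, to show every such module is an $M(n)$, and you suggest deriving the ``determined by rank vector'' fact from the Euler form ``as for exceptional modules over hereditary algebras.'' The paper instead cites Gei\ss--Leclerc--Schr\"oer's result (recorded in Remark 5.4 with a reference) that locally free indecomposable rigid $H$-modules are parametrized by real Schur roots; since $H$ is not hereditary, the hereditary Euler-form argument does not transfer verbatim, so the citation is the cleaner route, but your acknowledgement that an external classification input is needed is correct and the overall argument goes through.
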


\begin{remark}
When $bc<4$, the cluster variables $x_n$ are periodic, that is, there are only finitely many distinguished $x_n$. These cases actually fall into another class of cluster algebras of \emph{finite types} (or \emph{Dynkin types}), which will be discussed in \Cref{subsection: lf cc function}.
\end{remark}

\begin{example}\label{ex: b=2 c=3}
Let $(b, c) = (2, 3)$ and $(c_1, c_2) = (3, 2)$. Then the algebra $H = H(b,c,c_1,c_2)$ is the path 
algebra of the quiver $Q$
\[
\begin{tikzcd}
1 \ar[loop left, "\varepsilon_1"] \ar[r, "\alpha"] &2 \arrow[loop right, "\varepsilon_2"]
\end{tikzcd}    
\]
modulo the relations $\varepsilon_1^3 = 0$ and $\varepsilon_2^2 = 0$. According to the construction in \Cref{rmk: construction of M(n)}, we list first a few $M(n)\in \replf H$ for $n\geq 3$.
\begin{enumerate}
    \item $M(3) = E_1$. One calculates the only non-empty quiver Grassmannians' Euler characteristics $\chi(\grlf((0,0), E_1)) = \chi(\grlf((1,0), E_1)) = 1$. Thus $X_{M(3)} = x_1^{-1}(1 + x_2^3) = x_3$.
    \item $M(4) = I_2$, the injective hull of $E_2$. It is obtained in a similar way as the module $N$ in \Cref{ex: reflection b=2 c=3}. It is easy to see that $\chi(\grlf((0,0), I_2)) = \chi(\grlf((2,1), I_2)) = \chi(\grlf((0,1), I_2)) = 1$ as in each case, the quiver Grassmannian consists of a single subrepresentation. We have $\chi(\grlf((1,1), I_2)) = \chi(\mathbb P^1) = 2$ according to \Cref{cor: euler char of fiber source}. Thus 
    \[
        X_{M(4)} = x_1^{-2}x_2^{-1}(x_1^2 + x_2^6 + 1 + 2x_2^3) = x_4.
    \]
    \item $M(5)$ is calculated in \Cref{ex: reflection b=2 c=3}. Computing $X_{M(5)}$ is not so easy, but via (the proof of) \Cref{lemma: recursion of f polynomial general}, we have
\[
X_{M(5)} = x_1^{-5}x_2^{-3} \left( x_1^6 + 3x_1^4(1+ x_2^3) + 3x_1^2(1 + x_2^3)^3 + (1 + x_2^3)^5 \right) = x_5.
\]
\end{enumerate}
\end{example}

\subsection{Higher rank cluster algebras}
Extending the construction of $\mathcal A(b,c)$ to any $n\times n$ integral skew-symmetrizable matrix $B$, there is an associated (coefficient-free) cluster algebra $\mathcal A(B) \subset \mathbb Q(x_1, \dots, x_n)$ with the initial seed \[\Sigma = (B, (x_1, \dots, x_n)).\] Here we briefly review the construction by Fomin and Zelevinsky \cite{fomin2002cluster}. The previous definition of $\mathcal A(b,c)$ in rank 2 corresponds to $B = \begin{bsmallmatrix} 0 & -b\\
c & 0\end{bsmallmatrix}$.

Let $\mathbb T_n$ be the infinite simple $n$-regular tree emanating from a given root $t_0$ such that the $n$ edges incident to any vertex are numbered by $\{1,\dots, n\}$. We associate $\Sigma$ to $t_0$, and inductively if $\Sigma_t  = (B^t = (b_{ij}^t), (x_{1;t}, \dots, x_{n;t}))$ is associated to some vertex $t\in \mathbb T_n$, then
\begin{equation}\label{eq: cluster mutation}
\Sigma_{t'}\coloneqq \mu_k(\Sigma_t) \coloneqq \left(\mu_k(B^t), (x_{1;t'}, \dots, x_{n;t'})\right)
\end{equation}
is associated to $t'$ for $t \frac{k}{\quad\quad} t'$ in $\mathbb T_n$, where $\mu_k(B^t)$ is Fomin--Zelevinsky's matrix mutation of $B^t$ in direction $k$ and
\[
\text{$x_{i;t'} \coloneqq x_{i;t}$ for $i\neq k$\quad and\quad $x_{k;t} \coloneqq x_{k;t}^{-1}\bigg(\prod_{i=1}^n x_{i;t}^{[b_{ik}^t]_+} + \prod_{i=1}^n x_{i;t}^{[-b_{ik}^t]_+}\bigg)$}.
\]
In this way, each $t\in \mathbb T_n$ is associated with a well-defined seed $(B^t, (x_{1;t},\dots, x_{n;t}))$ where $B^t$ is an $n\times n$ integral skew-symmetrizable matrix and each rational function $x_{i;t}\in \mathbb Q(x_1,\dots, x_n)$ is called a \emph{cluster variable}. The cluster algebra $\mathcal A(B)$ is then defined to be the subalgebra of $\mathbb Q(x_1,\dots, x_n)$ generated by all cluster variables. The exchange between $\Sigma_t$ and $\Sigma_{t'}$ for $t\frac{k}{\quad\quad} t'$ is usually called a \emph{cluster mutation}.

\subsection{Locally free Caldero--Chapoton formulas}\label{subsection: lf cc function}

Let $(C, D, \Omega)$ be an $n\times n$ symmetrizable Cartan matrix $C$, a symmetrizer $D$, and an acyclic orientation $\Omega$ (see \Cref{section: GLS algebras} for precise definitions). Gei\ss, Leclerc and Schr\"oer \cite{geiss2017quivers} have associated a finite dimensional $K$-algebra $H = H_K(C, D, \Omega)$ to the triple (where $K$ is a field), generalizing the path algebra of an acyclic quiver. Similar to the rank 2 case, there are locally free $H$-modules, forming the subcategory $\replf H \subset \rep H$. Analogously, each $M\in \replf H$ has its rank vector $\rk M\in \mathbb N^n$. Let $E_i$ be the locally free module with rank vector $\alpha_i \coloneqq (\delta_{ij} \mid j = 1, \dots n)$.

We define the bilinear form $\langle -,- \rangle_H \colon \mathbb Z^n\otimes \mathbb Z^n \rightarrow \mathbb Z$ such that on the standard basis $(\alpha_i)_{i=1}^n$,
\[
\langle \alpha_i, \alpha_j \rangle_H = \begin{cases}
c_i\quad &\text{if $i = j$},\\
c_ic_{ij}\quad &\text{if $(j,i)\in \Omega$},\\
0\quad &\text{otherwise}.
\end{cases}
\]
The skew-symmetrization of $\langle -,- \rangle_H$ (on the basis $(\alpha_i)_i$) defines a skew-symmetric matrix $DB$ (thus defining a skew-symmetrizable matrix $B = B(C, \Omega) = (b_{ij})$ actually having integer entries), i.e. explicitly, we have
\[
b_{ij} = \begin{cases}
c_{ij} &\text{if $(j,i)\in \Omega$},\\
-c_{ij} &\text{if $(i,j)\in \Omega$},\\
0 &\text{otherwise}.
\end{cases}
\]

\begin{definition}[{\cite[Definition 1.1]{geiss2018quivers}}]\label{def: cc function general}
For a locally free $H_\mathbb C(C, D, \Omega)$-module $M$, the associated \emph{locally free Caldero--Chapoton function} is
\[
X_M \coloneqq \sum_{\mathbf r\in \mathbb N^n} \chi(\grlf(\mathbf r, M))\prod_{i=1}^n v_i^{-\langle \mathbf r, \alpha_i \rangle_H - \langle \alpha_i, \rk M - \mathbf r\rangle_H}\in \mathbb Z[x_1^\pm, \dots, x_n^\pm],
\]
where $v_i \coloneqq x_i^{1/c_i}$.
\end{definition}

Suppose that $k\in \{1, \dots, n\}$ is a sink of $\Omega$ and let $s_k(H)\coloneqq H(C, D, s_k(\Omega))$ be the reflection of $H$ at $k$. There is the (sink) reflection functor (see \Cref{section: reflection functors})
\[
F_k^+\colon \rep H \rightarrow \rep s_k(H),
\]
which generalizes the classical BGP reflection functor on quiver representations.

The following proposition gives an algebraic identity on Caldero--Chapoton functions under reflections. It is the key recursion that makes connection with cluster mutations.

\begin{proposition}[{\Cref{lemma: recursion of f polynomial general}} and {\Cref{cor: recursion of cc function general}}]\label{prop: introduction cc function under reflection general}
Let $M$ be a locally free $H$-module such that the map $M_{k,\mathrm{in}}$ is surjective. Then the reflection $M'\coloneqq  F_k^+(M)\in \rep s_k(H)$ is also locally free, and
\[
X_M(x_1,\dots, x_n) = X_{M'}(x_1', \dots, x_n'),
\]
where 
\[
\text{$x_i'=x_i$ for $i\neq k$\quad and \quad $x_k' = x_k^{-1}\left( \prod_{i=1}^n x_i^{[b_{ik}]_+} + \prod_{i=1}^n x_i^{[-b_{ik}]_+}\right)$}.
\]
\end{proposition}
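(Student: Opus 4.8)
The plan is to split the statement into the geometric recursion for the $f$-polynomials $f_M(t_1,\dots,t_n)=\sum_{\mathbf r}\chi(\grlf(\mathbf r,M))\prod_i t_i^{r_i}$ (this is \Cref{lemma: recursion of f polynomial general}) and the formal passage from that recursion to the identity of Caldero--Chapoton functions (\Cref{cor: recursion of cc function general}), and to handle each in turn. For the formal passage I would first record that the surjectivity of $M_{k,\mathrm{in}}$ makes $M'=F_k^+(M)$ locally free with $\rk M'=s_k(\rk M)$: at a vertex $j\neq k$ one has $M'_j=M_j$ as an $H_j$-module, while at $k$ the reflection functor fits into the short exact sequence of $H_k$-modules
\[
0\longrightarrow M'_k\longrightarrow \widehat M_k\xrightarrow{\,M_{k,\mathrm{in}}\,} M_k\longrightarrow 0,
\]
in which $\widehat M_k$ (the source of $M_{k,\mathrm{in}}$) is a free $H_k$-module because $M$ is locally free, exactness on the right being the hypothesis; since $M_k$ is free, hence projective, over the self-injective ring $H_k\cong\mathbb C[\varepsilon]/(\varepsilon^{c_k})$, the sequence splits and $M'_k$ is a direct summand of a free module, hence free. (This is part of \Cref{section: reflection functors}.) Next, a direct computation with the form $\langle-,-\rangle_H$ shows that the exponents $-\langle\mathbf r,\alpha_i\rangle_H-\langle\alpha_i,\rk M-\mathbf r\rangle_H$ of \Cref{def: cc function general}, viewed as affine-linear functions of $\mathbf r$ and $\rk M$, transform under $\mathbf r\mapsto s_k(\mathbf r)$ and $\rk M\mapsto\rk M'$ exactly by the rule attached to the mutation $x_k\mapsto x_k'$; since $k$ is a sink we have $[b_{ik}]_+=0$ for all $i$, so $x_k'=x_k^{-1}\bigl(1+\prod_{(i,k)\in\Omega}x_i^{c_{ik}}\bigr)$, which is precisely the binomial the $f$-polynomial recursion feeds in. Granting that recursion, this gives $X_M(x)=X_{M'}(x')$.

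To prove the $f$-polynomial recursion I would adapt to locally free quiver Grassmannians the fibration argument used by Caldero and Chapoton \cite{caldero2006cluster} for quiver representations. Since $k$ is a sink, a locally free subrepresentation $N\subseteq M$ of rank $\mathbf r$ corresponds to a pair consisting of a locally free subrepresentation $N^{(k)}$ of the restriction $M^{(k)}$ of $M$ to the (still acyclic) quiver obtained by deleting $k$, together with a locally free $H_k$-submodule $N_k\subseteq M_k$ of rank $r_k$ containing the image $M_{k,\mathrm{in}}(\widehat N_k)$, where $\widehat N_k\subseteq\widehat M_k$ is assembled from the $N_j$. This yields a morphism $\grlf(\mathbf r,M)\to\grlf(\mathbf r^{(k)},M^{(k)})$ whose fibre over $N^{(k)}$ is the variety of rank-$r_k$ locally free $H_k$-submodules of $M_k$ lying between $M_{k,\mathrm{in}}(\widehat N_k)$ and $M_k$. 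Because $F_k^+$ does not change the restriction away from $k$, the module $M'$ --- for which $k$ is a source --- admits the analogous presentation over the \emph{same} base $\grlf(\mathbf r^{(k)},M^{(k)})$, the fibre over $N^{(k)}$ now being the variety of rank-$r_k'$ locally free $H_k$-submodules of $M'_k$ sent into the $N_j$ by the new maps out of $k$, i.e.\ locally free submodules of $\ker(M_{k,\mathrm{in}}\colon\widehat N_k\to M_k)$. By additivity of the Euler characteristic the contribution of each stratum of the base to $f_M$ is the Euler characteristic of that stratum times a generating function in $t_k$ supplied by the fibre; a computation of the Euler characteristics of these fibres collapses that generating function into a single binomial factor, and the exponents are arranged so that the contributions from $M$ and from $M'$ match stratum by stratum after the mutation of the relevant $y$-variables at $k$. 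Summing over the strata then gives the recursion.

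The hard part will be this last Euler-characteristic computation. Unlike the quiver case $c_k=1$, a submodule of a free $\mathbb C[\varepsilon]/(\varepsilon^{c_k})$-module need not be free, so the fibres above are not ordinary subspace Grassmannians and their Euler characteristics are not simply binomial coefficients. The surjectivity of $M_{k,\mathrm{in}}$ is exactly what keeps this under control: it forces $M'_k$ to be free, as above, and it restricts the image $M_{k,\mathrm{in}}(\widehat N_k)$ enough that --- after choosing an $H_k$-basis of $M_k$ adapted to that image --- one should be able to identify the relevant fibre with a product of ordinary Grassmannians, or at least to compute its Euler characteristic; I expect this to rest on the structural properties of locally free Grassmannians developed around \Cref{def: locally free grassmannian}. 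The genuinely delicate point is the locus in $\grlf(\mathbf r^{(k)},M^{(k)})$ where $M_{k,\mathrm{in}}(\widehat N_k)$ is not free, or does not have its generic rank: one has to check that the contributions of those strata either vanish or cancel between the $M$- and $M'$-sides. Once the $f$-polynomial recursion is available, the passage to the Caldero--Chapoton functions and the verification that $M'$ is locally free with the reflected rank vector are routine.
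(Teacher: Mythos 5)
Your overall architecture is the same as the paper's: show $M'$ is locally free via the short exact sequence and self-injectivity of $H_k$ (correct), reduce the Caldero--Chapoton identity to an $F$-polynomial recursion by a formal change of variables (correct, this is \Cref{cor: recursion of cc function general}), and attack the recursion by fibering locally free Grassmannians over the data away from vertex $k$ (correct in spirit, this is \Cref{lemma: recursion of f polynomial general}). But you have flagged, without resolving, precisely the step that constitutes the heart of the proof, so there is a genuine gap.

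Concretely, your map $\grlf(\mathbf r,M)\to\grlf(\mathbf r^{(k)},M^{(k)})$ is not an Euler-characteristic fibration: the Euler characteristic of the fiber over $(N_j)_{j\neq k}$ depends on the $H_k$-module isomorphism type of $L\coloneqq M_{k,\mathrm{in}}(N_{(k,-)})$, which varies over the base, so you cannot simply extract one binomial factor. The paper's resolution is to stratify the base by the discrete invariant $r=\operatorname{rank}E(L)$ (the rank of the injective hull of $L$ over $H_k$) on the $M$ side, and dually by $s=\operatorname{rank}F((M'_{k,\mathrm{out}})^{-1}N_{(-,k)})$ (the rank of a maximal free submodule) on the $M'$ side; \Cref{prop: euler char of fiber} then shows that the fiber $\mathcal V(e_k,L,M_k)$ has Euler characteristic $\binom{m_k-r}{e_k-r}$ depending only on $r$, not on the full isomorphism type of $L$. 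That proposition is itself nontrivial: the fiber is not a product of ordinary Grassmannians as you suggest, but is exhibited as an iterated affine-space fibration over $\mathrm{Gr}(e_k-r,m_k-r)$ via a flag-variety parametrization. Finally, the matching of strata between the $M$ and $M'$ sides is the set-theoretic identity $X_{\mathbf e';s}(M')=Z_{\mathbf e';r}(M)$ for $r+s=\sum_j b_{kj}e_j$, coming from the short exact sequence $0\to(M'_{k,\mathrm{out}})^{-1}(N_{(-,k)})\to N_{(k,-)}\to L\to 0$ and the observation that $\operatorname{rank}E(L)+\operatorname{rank}F(\ker)=\operatorname{rank}N_{(k,-)}$. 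Without the stratification invariant, the fiber Euler-characteristic computation, and this rank bookkeeping, the argument does not close. (Two minor slips in your formal passage: for $k$ a sink the surviving product indexes $i$ with $(k,i)\in\Omega$, not $(i,k)\in\Omega$, and the exponent is $-c_{ik}=-b_{ik}$, not $c_{ik}$.)
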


For $B = B(C, \Omega)$, an easy calculation shows that when $k$ is a sink or source, $\mu_k(B) = B(C, s_k(\Omega))$. This hints that the recursion in \Cref{prop: introduction cc function under reflection general} is closely related to cluster mutations as in (\ref{eq: cluster mutation}) at sink or source, which actually leads to our next main result

\begin{theorem}[{\Cref{thm: main theorem general}}]
For any non-initial cluster variable $x$ in $\mathcal A(B)$ obtained by almost sink and source mutations, there is a unique locally free indecomposable rigid $H$-module $M$ such that $x = X_M$.
\end{theorem}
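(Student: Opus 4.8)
The plan is to prove existence and uniqueness together, by induction on the number of mutations, carried out uniformly over all triples $(C,D,\Omega)$. Recall that $x$ being obtained by almost sink and source mutations means there are indices $k_1,\dots,k_\ell$ such that, writing $\Omega_0=\Omega$ and $\Omega_j=s_{k_j}(\Omega_{j-1})$, each $k_j$ is a sink or a source of $\Omega_{j-1}$ and $x=x_{k_\ell;t_\ell}$ for $t_\ell=\mu_{k_\ell}\cdots\mu_{k_1}(t_0)$; I would induct on the least such $\ell$. For $\ell=1$, the variable $x=x_{k;t_1}$ comes from a single mutation at a sink or source $k$ of $\Omega$, and I would take $M=E_k$. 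It is immediate that $E_k$ is locally free with $\rk E_k=\alpha_k$, indecomposable since $\operatorname{End}_H(E_k)\cong H_k$ is local, and rigid since any self-extension of $E_k$ is supported at the vertex $k$ and hence splits already over $H_k$, so that $\operatorname{Ext}^1_H(E_k,E_k)=0$. Since $E_k$ has only the two locally free submodules $0$ and $E_k$, the function $X_{E_k}$ has exactly two monomials; unravelling \Cref{def: cc function general}, and using that $k$ is a sink or source of $\Omega$ so that $\langle\alpha_k,-\rangle_H$ and $\langle-,\alpha_k\rangle_H$ are supported in complementary directions, one identifies these two monomials with the two terms of the exchange relation at $k$, whence $X_{E_k}=x_{k;t_1}$. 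Uniqueness for $\ell=1$ follows because $X_M$ recovers $\rk M$ as its denominator vector, forcing $\rk M=\alpha_k$, and the only locally free module with rank vector $\alpha_k$ is $E_k$.

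For the inductive step, $\ell\ge2$, the idea is to strip off the first mutation via the reflection recursion. Fix a shortest witnessing sequence $k_1,\dots,k_\ell$; minimality forces $x\ne x_{k_1;t_1}$, and since $x$ is non-initial it also differs from the remaining variables of the $t_1$-cluster. Assume $k_1$ is a sink of $\Omega$, the source case being symmetric, and set $H'=s_{k_1}(H)=H(C,D,\Omega_1)$. Since $\mu_{k_1}(B)=B(C,\Omega_1)$, the $t_1$-seed is an initial seed for the same cluster algebra, relative to which $x$ is a non-initial cluster variable obtained by the shorter sequence $k_2,\dots,k_\ell$; the induction hypothesis applied to $H'$ then yields a unique locally free indecomposable rigid $H'$-module $M'$ with $X_{M'}(x_{1;t_1},\dots,x_{n;t_1})=x$. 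I would then set $M=F_{k_1}^-(M')$, using the source reflection functor $F_{k_1}^-\colon\rep H'\to\rep s_{k_1}(H')=\rep H$ at $k_1$ (which is a source of $\Omega_1$); see \Cref{section: reflection functors}. By \Cref{prop: introduction cc function under reflection general} and its inverse \Cref{cor: recursion of cc function general}, the module $M$ is again locally free and $X_M(x_1,\dots,x_n)=X_{M'}(x_{1;t_1},\dots,x_{n;t_1})=x$, the change of variables in the recursion being precisely the cluster mutation $\mu_{k_1}$; moreover $M$ inherits indecomposability and rigidity from $M'$, since the reflection functors restrict to equivalences preserving $\operatorname{Ext}^1$ between the relevant subcategories. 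The same equivalence gives uniqueness: any other locally free indecomposable rigid $H$-module $M''$ with $X_{M''}=x$ is not isomorphic to $E_{k_1}$ (since $X_{E_{k_1}}=x_{k_1;t_1}$ is $1$-admissible while $\ell\ge2$), so $F_{k_1}^+(M'')$ is a locally free indecomposable rigid $H'$-module whose Caldero--Chapoton function in the $t_1$-variables is $x$, hence is isomorphic to $M'$ by induction; applying $F_{k_1}^-$ gives $M''\cong M$.

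The step I expect to be the main obstacle is the verification, needed at every level of the induction, that the module $M'$ handed back by the induction hypothesis satisfies the surjectivity/injectivity hypothesis of \Cref{prop: introduction cc function under reflection general}, equivalently that $M'\not\cong E_{k_1}$ as an $H'$-module. This is exactly where the restriction to cluster variables reachable by sink and source mutations is essential: were $M'\cong E_{k_1}$, the base case applied to $H'$ would force $X_{M'}(x_{1;t_1},\dots,x_{n;t_1})=x_{k_1;t_0}=x_{k_1}$, an initial cluster variable of $\mathcal A(B)$, contradicting that $x$ is non-initial; but for a general mutation direction, which need not be a sink or a source, no such dichotomy is available and the recursion cannot be applied. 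Everything else comes down to routine computations --- the base-case evaluation of $X_{E_k}$, and the identification of $\rk M$ with the denominator vector of $X_M$ --- together with standard properties of the reflection functors already established in \Cref{prop: introduction cc function under reflection general}, \Cref{cor: recursion of cc function general}, \Cref{section: reflection functors} and \cite{geiss2017quivers}, namely that they restrict to equivalences preserving local freeness, indecomposability and rigidity on the subcategories in question.
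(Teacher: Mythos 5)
Your proof is correct in substance and follows the same overall strategy as the paper: induct on the length of an admissible mutation sequence, with the base case $M=E_k$ and $X_{E_k}$ equal to a once-mutated cluster variable, and the inductive step given by stripping the first (sink or source) mutation via the recursion of \Cref{cor: recursion of cc function general}. The two places where you diverge from the paper are worth highlighting. First, to handle the degenerate case where the module appearing in the recursion would be $E_{k_1}$, the paper extends the reflection operators to formal symbols $x_1,\dots,x_n$ so that $F_i^\pm$ sends $E_i$ to the symbol $x_i$ and back, making the unified formula $X_{M_\mathbf i}=x_\mathbf i$ hold for all admissible $\mathbf i$ without case analysis; you instead choose a \emph{shortest} witnessing sequence and argue that minimality together with $x$ being non-initial rules out $M'\cong E_{k_1}$. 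Both work; the paper's device is slicker for bookkeeping, yours is more transparent about why the sink/source hypothesis matters. Second, for uniqueness the paper reads off $\rk M$ as the $\mathbf d$-vector of $X_M$ (\Cref{rmk: express cc function by f polynomial}) and then invokes \cite{geiss2018rigid} to say a locally free indecomposable rigid module is determined by its rank vector; you only use the $\mathbf d$-vector argument at the base case (where rank vector $\alpha_k$ trivially forces $M\cong E_k$) and propagate uniqueness through the reflection equivalences of \Cref{prop: reflections are adjoint}. This is a genuine simplification: your argument avoids the citation to \cite{geiss2018rigid} entirely, at the cost of having to verify at each step that the competing module $M''$ is not $E_{k_1}$, which again follows from minimality.

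One small but real inaccuracy: you paraphrase ``almost sink and source mutations'' as requiring \emph{every} index $k_j$, including the last one $k_\ell$, to be a sink or source. The paper's \Cref{def: admissible sequence} deliberately places no constraint on the last index $i_{k+1}$ --- that is the content of ``almost.'' Your base case even appeals to $k$ being a sink or source to match $X_{E_k}$ with the exchange relation, but this is unnecessary: \Cref{ex: cc function of E_k} shows $X_{E_k}=x_k^{-1}\bigl(\prod_i x_i^{[b_{ik}]_+}+\prod_i x_i^{[-b_{ik}]_+}\bigr)$ for arbitrary $k$, which is exactly $x_{k;\mu_k(t_0)}$. With that observation your base case and hence the whole induction cover the paper's actual hypothesis (only $k_1,\dots,k_{\ell-1}$ need be sinks or sources), so the gap is one of formulation rather than of argument, and is easily repaired.
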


If $C$ is of Dynkin type, it is known that any non-initial cluster variable can be obtained by almost sink and source mutations. Therefore our method provides a new proof of the following theorem of Gei\ss--Leclerc--Schr\"oer \cite[Theorem 1.2 (c) and (d)]{geiss2018quivers}.

\begin{theorem}[\Cref{thm: main theorem GLS Dynkin}]\label{thm: intro main theorem dynkin}
If $C$ is of Dynkin type, then the map $M\mapsto X_M$ induces a bijection between isomorphism classes of locally free indecomposable rigid $H$-modules and the non-initial cluster variables of $\mathcal A(B)$.
\end{theorem}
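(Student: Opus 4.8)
The plan is to reduce the statement to \Cref{thm: main theorem general} (our result on cluster variables obtained by almost sink and source mutations) together with a Bernstein--Gelfand--Ponomarev style reduction using the reflection identity \Cref{prop: introduction cc function under reflection general}. Since $C$ is Dynkin, it is classical that every non-initial cluster variable $x$ of $\mathcal A(B)$ is obtained from the initial seed by almost sink and source mutations, so \Cref{thm: main theorem general} furnishes a unique locally free indecomposable rigid $H$-module $M_x$ with $X_{M_x}=x$. The assignment $x\mapsto M_x$ is injective (if $M_x=M_{x'}$ then $x=X_{M_x}=X_{M_{x'}}=x'$). If one knows in addition that $X_M$ is a non-initial cluster variable for \emph{every} locally free indecomposable rigid $M$, then $x\mapsto M_x$ is also surjective --- any such $M$ equals $M_{X_M}$, since $M$ and $M_{X_M}$ have the same locally free Caldero--Chapoton function and \Cref{thm: main theorem general} asserts uniqueness --- hence a bijection, whose inverse is exactly $M\mapsto X_M$. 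Here one also invokes the standard Fomin--Zelevinsky facts that in finite type distinct cluster variables are distinct Laurent polynomials and that $X_M$, having denominator vector $\rk M\neq 0$, is not an initial variable, so the target set is unambiguous.

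It therefore remains to show that $X_M$ is always a non-initial cluster variable, and this is where reflections enter in the spirit of the paper. Fix a locally free indecomposable rigid $M$ over $H$ and choose an admissible sequence of sinks $k_1,k_2,\dots$; that is, $k_1$ is a sink of $\Omega$, $k_2$ a sink of $s_{k_1}(\Omega)$, and so on. A Bernstein--Gelfand--Ponomarev argument, available because $C$ is Dynkin, shows that the iterated reflections $M^{(j)}\coloneqq F^+_{k_j}\circ\cdots\circ F^+_{k_1}(M)$ remain locally free indecomposable rigid over the reflected algebras and that, after finitely many steps, $M^{(j)}$ becomes isomorphic to $E_{k_{j+1}}$ for the then-current sink $k_{j+1}$. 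At each intermediate step $M^{(i)}$ is indecomposable and not the simple at the current sink, so the map $M^{(i)}_{k_{i+1},\mathrm{in}}$ is surjective and \Cref{prop: introduction cc function under reflection general} applies, rewriting $X_{M^{(i)}}$ in the current cluster as $X_{M^{(i+1)}}$ in the cluster obtained by mutating at $k_{i+1}$ --- a genuine cluster of $\mathcal A(B)$, since that mutation occurs at a sink. Iterating this identity, $X_M$ in the initial cluster equals $X_{E_{k_{j+1}}}$ evaluated in a later cluster; and a direct count of the two nonempty locally free quiver Grassmannians of $E_k$ (as carried out for $E_1$ in \Cref{ex: b=2 c=3}) gives $X_{E_k}=x_k^{-1}\bigl(\prod_i x_i^{[b_{ik}]_+}+\prod_i x_i^{[-b_{ik}]_+}\bigr)$, the cluster variable obtained from $x_k$ by a single mutation. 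Hence $X_M$ is a cluster variable of $\mathcal A(B)$, closing the loop.

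I expect the main obstacle to be verifying that the Bernstein--Gelfand--Ponomarev reduction transfers verbatim from quiver representations to locally free $H$-modules in Dynkin type: that an admissible sequence of sink reflections annihilates every locally free indecomposable module after finitely many steps, that a non-simple locally free indecomposable satisfies the surjectivity hypothesis of \Cref{prop: introduction cc function under reflection general} at the current sink, and that $F^+_k$ preserves local freeness, indecomposability and rigidity under these hypotheses (the last following from the theory of reflection functors of \cite{geiss2017quivers} on which \Cref{prop: introduction cc function under reflection general} is built). An alternative that sidesteps this is a counting argument: Gei\ss--Leclerc--Schr\"oer classify, by purely representation-theoretic means, the locally free indecomposable rigid $H$-modules in Dynkin type by their rank vectors, which range exactly over the positive roots of $C$, while \cite{fomin2003cluster} puts the non-initial cluster variables in bijection with the same set; then the injection $x\mapsto M_x$ above, being a map between finite sets of equal cardinality, is forced to be a bijection with inverse $M\mapsto X_M$. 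This route trades the reduction for the Gei\ss--Leclerc--Schr\"oer classification, and one must only check that that classification is genuinely independent of the results \cite{geiss2016quiversIII} and \cite{yang2008cluster} that the new proof is meant to circumvent.
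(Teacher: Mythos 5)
Your proposal is essentially correct and runs parallel to the paper's argument, with a reordering of the logic rather than a genuinely different route. The paper proves \Cref{thm: main theorem GLS Dynkin} directly by the same induction underlying \Cref{thm: main theorem general}: it chooses, for each positive root $\beta$, a sink-adapted sequence $\mathbf i$ (via \Cref{lemma: sequence for a positive root}), builds $M_\mathbf i$ by successive source reflections from $E_{i_{k+1}}$, and applies \Cref{cor: recursion of cc function general} at each step to get $X_{M_\mathbf i}=x_\mathbf i$. You instead invoke \Cref{thm: main theorem general} wholesale; that is fine logically (the paper itself remarks in the introduction that the Dynkin case follows from the almost-sink-and-source result), but it is the same induction, not a new argument.

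Where you diverge is the surjectivity step. The paper does not reduce an arbitrary locally free indecomposable rigid $M$ by BGP reflections; it constructs $M_\mathbf i$ for every $\beta\in\Delta^+_C$ and then appeals to Gei\ss--Leclerc--Schr\"oer's classification (\cite[Theorem 1.3]{geiss2017quivers}, cf.\ \cite{geiss2018rigid}) to conclude that the map $\beta\mapsto M_\mathbf i$ exhausts all locally free indecomposable rigid modules, and to \cite{fomin2003cluster} for the bijection with non-initial cluster variables via $\mathbf d$-vectors. Your primary path runs the other way: reduce $M$ to some $E_k$ by sink reflections and unwind. This is in principle correct, and you correctly flag the termination of the BGP reduction as the nontrivial point; but note that establishing termination in this setting essentially requires knowing that $\rk M$ is a positive real root, which is precisely the Gei\ss--Leclerc--Schr\"oer classification you were hoping to sidestep. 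In other words the BGP route does not actually buy independence from \cite{geiss2018rigid}. Your second, counting-style alternative coincides with what the paper does (modulo packaging), and your closing worry --- whether that classification avoids \cite{geiss2016quiversIII} and \cite{yang2008cluster} --- is exactly the point the paper's ``new proof'' claim rests on: the rigid-module classification in \cite{geiss2018rigid} is a self-contained representation-theoretic result, not a consequence of the PBW/enveloping-algebra machinery the paper is circumventing.

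Two small points worth tightening in your write-up. First, the uniqueness statement in \Cref{thm: main theorem general} ($M_\mathbf i$ determined by $x_\mathbf i$) itself already uses the rank-vector classification, so citing it to conclude $M\cong M_{X_M}$ is fine but makes the dependence on \cite{geiss2018rigid} explicit rather than avoided. Second, in the BGP reduction you should allow the reflected module to reach $E_m$ with $m$ not the current sink (in which case reflection simply continues, since $(E_m)_k=0$ makes $(E_m)_{k,\mathrm{in}}$ vacuously surjective); the process halts only when the rank vector becomes $\alpha_k$ at a moment when $k$ is the current sink, which is guaranteed by the root combinatorics once one knows $\rk M\in\Delta^+_C$.
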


\subsection{Other related work} 

Caldero and Zelevinsky \cite{caldero2006laurent} studied how the Caldero--Chapoton functions of representations of generalized Kronecker quivers behave under reflection functors and used them to express cluster variables of skew-symmetric rank 2 cluster algebras. Our result in rank 2 can thus be seen as a generalization to the skew-symmetrizable case. 

We remark that the recursion in \Cref{prop: introduction cc function under reflection general} has already been achieved in the skew-symmetric case for any reflection, not necessarily at sink or source, of any quiver by Derksen--Weyman--Zelevinsky \cite{derksen2008quivers,derksen2010quivers}. 
Extending their theory, especially obtaining Caldero--Chapoton type formulas, to the skew-symmetrizable case in full generality remains an open problem; see for example \cite{demonet2010mutations, labardini2016strongly, Geuenich-LF1, Geuenich-LF2, lopez2019species, bautista2021potentials}.

There are several earlier work generalizing Caldero--Chapoton type formulas (or in the name of \emph{cluster characters}) to the skew-symmetrizable case. Demonet \cite{demonet2011categorification} has obtained cluster characters for acyclic skew-symmetrizable cluster algebras by extending \cite{geiss2011kac} to an equivariant version. Rupel \cite{rupel2011quantum, rupel2015quantum} has used representations of valued quivers over finite fields to obtain a quantum analogue of Caldero--Chapoton formula for quantum acyclic symmetrizable cluster algebras. The representation theories used in those work are however different from the one initiated in \cite{geiss2017quivers} which we follow in this paper. 

Fu, Geng and Liu \cite{fu2020cluster} have obtained locally free Caldero--Chapoton formulas for type $C_n$ cluster algebras with respect to not necessarily acyclic clusters. In the upcoming work \cite{LMpart2} with Labardini-Fragoso, we prove locally free Caldero--Chapoton formulas with respect to any cluster for cluster algebras associated to surfaces with boundary marks and orbifold points.

\subsection{Organization}
The paper is organized as follows. In \Cref{section: GLS algebras}, we recall the algebras $H(C, D, \Omega)$ defined by Gei\ss--Leclerc--Schr\"oer and review some necessary notions including locally free $H$-modules. In \Cref{section: reflection functors}, we review the definition of reflection functors for $H$-modules and their properties. In \Cref{section: lf cc functions} we study the reflections of $F$-polynomials of locally free modules, leading to a cluster type recursion of locally free Caldero--Chapoton functions. In \Cref{section: rank 2}, \Cref{section: dynkin case} and \Cref{section: beyond}, we apply the results obtained in \Cref{section: lf cc functions} to rank 2, Dynkin, and general cases respectively to obtain locally free Caldero--Chapoton formulas of cluster variables for skew-symmetrizable cluster algebras.

\section*{Acknowledgement}
The author would like to thank Daniel Labardini-Fragoso for helpful discussions. LM is supported by the Royal Society through the Newton International Fellowship NIF\textbackslash R1\textbackslash 201849.

\section{The algebras \texorpdfstring{$H(C, D, \Omega)$}{H(C,D,Omega)}}\label{section: GLS algebras}

In this section, we review the algebras $H(C, D, \Omega)$ defined in \cite{geiss2017quivers} and some relevant notions. 

\subsection{}
Let $(C, D, \Omega)$ be a \emph{symmetrizable Cartan matrix} $C$, a symmetrizer $D$ of $C$, and an acyclic orientation $\Omega$ of $C$. Let $I = \{1, \dots, n\}$. Precisely, the matrix $C = (c_{ij})\in \mathbb Z^{I\times I}$ satisfies that
\begin{itemize}
    \item $c_{ii} = 2$ for any $i\in I$ and $c_{ij}\leq 0$ for $i\neq j$, and
    \item there is some \emph{symmetrizer} $D = \mathrm{diag}(c_1, \dots, c_n)$ where $c_i\in \mathbb Z_{>0}$ for $i\in I$ such that $DC$ is symmetric.
\end{itemize}
An \emph{orientation} of $C$ is a subset $\Omega\subset I\times I$ such that 
\begin{itemize}
    \item $\{(i,j),(j,i)\}\cap \Omega \neq \varnothing$ if and only if $c_{ij}<0$, and
    \item if $(i,j)\in \Omega$, then $(j,i)\neq \Omega$.
\end{itemize}

Define $Q^\circ = Q^\circ (C, \Omega)\coloneqq (Q^\circ_0, Q^\circ_1, s, t)$ to be the quiver with
\begin{itemize}
    \item the set of vertices $Q_0^\circ \coloneqq I$, and
    \item the set of arrows $Q_1^\circ \coloneqq \{\alpha_{ij}^{(k)}\colon j\rightarrow i \mid (i,j)\in \Omega,\ k = 1,\dots, g_{ij}\}$ where $g_{ij}\coloneqq \gcd(c_{ij}, c_{ji})$.
\end{itemize}
We say $\Omega$ \emph{acyclic} if the quiver $Q^\circ$ is acyclic, i.e., has no oriented cycles. Define $Q = Q(C, \Omega)$ to be the quiver obtained from $Q^\circ$ by adding one loop $\varepsilon_i\colon i\rightarrow i$ to each vertex $i\in I$.

Following \cite{geiss2017quivers}, we define (over some ground field $K$) the algebra $H\coloneqq H_K(C, D, \Omega)$ to be the path algebra $KQ$ modulo the ideal generated by the relations
\begin{itemize}
    \item $\varepsilon_i^{c_i} = 0$ for $i\in I$,
    \item $\varepsilon_i^{f_{ji}} \alpha_{ij}^{(k)} = \alpha_{ij}^{(k)} \varepsilon_j^{f_{ij}}$ for $(i,j)\in \Omega$ and $k=1,\dots, g_{ij}$ where $f_{ij} \coloneqq  -c_{ij}/g_{ij}$.
\end{itemize}

The \emph{opposite orientation} of $\Omega$ is
\[
\Omega^* \coloneqq \{(i,j)\mid (j,i)\in \Omega\},
\]
which clearly is an orientation of $C$. We denote $H^* \coloneqq H(C, D, \Omega^*)$.

\subsection{} From now on, we will always assume that $\Omega$ is acyclic. For $i\in I$, let
\[
H_i \coloneqq e_iHe_i \cong K[\varepsilon_i]/(\varepsilon_i^{c_i}),
\]
which is a subalgebra of $H$. For $(i,j)\in \Omega$, define the $H_i$-$H_j$-sub-bimodule
\[
_iH_j \coloneqq \langle \alpha_{ij}^{(k)} \mid k = 1, \dots, g_{ij} \rangle \subset H.
\]
If $(j,i)\notin \Omega$, then $(i, j)\in \Omega^*$. Consider the algebra $H^* = H(C, D, \Omega^*)$. We define $_jH_i \coloneqq {_j(H^*)_i}$, which is an $H_j$-$H_i$-bimodule.

As a right $H_j$-module, ${_iH_j}$ is free of rank $-c_{ji}$ with the basis given by
\[
\{ \varepsilon_i^{f_{ji}-1-f}\alpha_{ij}^{(k)}\mid 0\leq f\leq f_{ji}-1,\ 1\leq k\leq g_{ij}\}.
\]
While as a left $H_i$-module, ${_iH_j}$ is free of rank $-c_{ij}$ with the basis
\[
\{ \alpha_{ij}^{(k)}\varepsilon_{j}^f \mid 0\leq k\leq g_{ij},\ 0\leq f\leq f_{ij}\}.
\]
There is then an $H_i$-$H_j$-bimodule isomorphism
\begin{equation}\label{eq: isomorphism rho}
\rho\colon {_iH_j} \rightarrow \Hom_{H_j}({_jH_i}, H_j),\quad \varepsilon_i^{f_{ji}-1-f}\alpha_{ij}^{(k)}\mapsto (\alpha_{ji}^{(k)} \varepsilon_i^f)^*    
\end{equation}
for $0\leq f\leq f_{ji}-1$ and $1\leq k\leq g_{ij}$.
For more details, we refer to \cite[Section 5.1]{geiss2017quivers}.

\subsection{}
Let $\rep H$ denote the category of finitely generated left $H$-modules. We will often treat $\rep H$ as the equivalent category of quiver representations of $Q$ satisfying relations in $I$. For $M\in \rep H$ and $i\in I$, the subspace $M_i\coloneqq e_iM$ is a finitely generated module over $H_i$.

\begin{definition}
We say that $M\in \rep H$ is \emph{locally free} if for each $i\in I$, the $H_i$-module $M_i$ is free, i.e. is isomorphic to $H_i^{\oplus r_i}$ for some $r_i\in \mathbb N$.
\end{definition}

Denote the full subcategory of locally free $H$-modules by $\replf H$. For $M\in \replf H$, define its \emph{rank vector}
\[
\rk M \coloneqq (r_1, \dots, r_n)\in \mathbb Z^I
\]
where $r_i$ stands for the rank of $e_iM$ as a free $H_i$-module. Let $E_i$ be the locally free $H$-module such that $\rk E_i = \alpha_i\coloneqq (\delta_{ij}\mid j\in I)\in \mathbb Z^I$.

We remark that $H_i$ itself is the only (non-zero) indecomposable projective (also injective) $H_i$-module. Any indecomposable $H_i$-module is isomorphic to the submodule $H_i \varepsilon_i^{k} \subset H_i$ for some $k \in \{ 0, \dots, c_i \}$.

\subsection{} Any $M\in \rep H$ is determined by the $H_i$-modules $M_i$ for $i\in I$ and the $H_i$-module homomorphisms
\[
M_{ij}\colon _iH_j\otimes_{H_j} M_j \rightarrow M_i,\quad (\alpha_{ij}^{(k)}, m) \mapsto M_{\alpha_{ij}^{(k)}}(m)
\]
for any $(i,j)\in \Omega$. We will later describe an $H$-module $M$ by specifying the data $(M_i, M_{ij})$. 

When there is no ambiguity, the subscript $H_j$ under the tensor product will be omitted, hence the simplified notation $_iH_j\otimes M_j$.

\section{Reflection functors}\label{section: reflection functors}

The Bernstein--Gelfand--Ponomarev reflection functors \cite{bernstein1973coxeter} are firstly defined to relate representations of an acyclic quiver $Q$ with that of the reflection of $Q$ at a sink or source vertex. These functors have been generalized to act on representations of $H(C, D, \Omega)$ in \cite{geiss2017quivers}. In this section, we recall their definitions and review some useful properties.

For an orientation $\Omega$ of $C$ and $i\in I$, the \emph{reflection of $\Omega$ at $i$} is the following orientation of $C$
\[
s_i(\Omega) \coloneqq \{(r,s)\in \Omega\mid i\notin \{r,s\}\} \cup \{(s,r)\in \Omega^* \mid i\in \{r, s\}\}.
\]
We denote $s_i(H) \coloneqq H(C, D, s_i(\Omega))$. Denote
\[
\Omega(i,-) \coloneqq \{j\in I\mid (i,j)\in \Omega\}\quad \text{and}\quad \Omega(-,i) \coloneqq \{j\in I\mid (j,i)\in \Omega\}.
\]
A vertex $i\in I$ is called a sink (resp. source) of $\Omega$ if it is a sink (resp. source) of the quiver $Q^\circ$, i.e. $\Omega(-,i) = \varnothing$ (resp. $\Omega(i,-) = \varnothing$). The only cases we will need are reflections at a sink or source.

\subsection{Sink reflection} Let $k$ be a sink of $\Omega$. In this subsection we define the \emph{sink reflection functor}
\[
F_k^+\colon \rep H \rightarrow \rep s_k(H).
\]
Denote $M_{(k,-)} \coloneqq \bigoplus_{j\in \Omega(k,-)} {_k H_j}\otimes M_j$. Consider the $H_k$-module morphism
\[
M_{k,\mathrm{in}}\coloneqq (M_{kj})_j \colon M_{(k,-)} \longrightarrow M_k.
\]
Let $N_k$ be $\ker M_{k,\mathrm{in}}$ as an $H_k$-module. Denote the inclusion $N_k\subset M_{(k,-)}$ by
\[
(\beta_{jk})_j\colon N_k \rightarrow M_{(k,-)}, \quad \beta_{jk} \colon N_k \rightarrow \,_kH_j\otimes M_j.
\]
The isomorphism $\rho\colon _kH_j\rightarrow \Hom_{H_j}(_jH_k, H_j)$ (\ref{eq: isomorphism rho}) induces the isomorphism
\[
\rho \colon \,_kH_j \otimes M_ j \rightarrow \Hom_{H_j}(_jH_k, M_j)
\]
Then further by the tensor-hom adjunction, we have
\[
\Hom_{H_k}(N_k, \,_kH_j\otimes M_j) \cong \Hom_{H_j} (_jH_k\otimes N_k, M_j),
\]
under which $\beta_{jk}$ corresponds explicitly to the map
\begin{equation}\label{eq: define sink reflection}
N_{jk}\colon _jH_k\otimes N_k \rightarrow M_j,\quad (\alpha, n) \mapsto \langle \alpha, \rho(\beta_{jk}(n)) \rangle \quad \text{for $n\in N_k$ and $\alpha\in {_jH_k}$}.    
\end{equation}

Now we define $F_k^+(M) = (N_r, N_{rs})$ with $(r,s)\in s_k(\Omega)$, where
\[
N_r\coloneqq \begin{cases}
M_r,\quad &\text{if $r\neq k$},\\
N_k,\quad &\text{if $r = k$}
\end{cases}\quad \text{and}
\]
\[
N_{rs}\coloneqq \begin{cases}
M_{rs}\quad &\text{if $(r,s)\in\Omega$ and $r\neq k$,}\\
N_{rk}\quad &\text{if $(r,s)\in \Omega^*$ and $s = k$}.
\end{cases}
\]

For a morphism $f = (f_i)_{i\in I} \colon M\rightarrow M'$ in $\rep H$, the morphism 
\[
    F_k^+(f) = (f_i')_{i\in I} \colon F_k^+(M) \rightarrow F_k^+(M')
\]
is defined by setting $f'_i = f_i$ for $i\neq k$ and $f_k'$ to be naturally induced between kernels. Thus $F_k^+$ is functorial.

\subsection{Source reflection}
For $k$ a source of $\Omega$, we define the \emph{source reflection functor}
\[
F_k^-\colon \rep H \rightarrow \rep s_k(H).
\]
Denote $M_{(-,k)} \coloneqq \bigoplus_{j\in \Omega(-,k)} \,_k H_j\otimes M_j$. Consider the $H_k$-module morphism
\[
M_{k,\mathrm{out}}\coloneqq (\overline M_{jk})_j \colon M_k\rightarrow M_{(-,k)}
\]
where each component $\overline  M_{jk} \colon M_k \rightarrow \,_kH_j\otimes M_j$ for $(k,j)\in \Omega$ is defined through the structure morphism $M_{jk}$ as follows. In fact, by the tensor-hom adjunction, we have the canonical isomorphism
\[
\Hom_{H_j}({_jH_k}\otimes M_k, M_j) \cong \Hom_{H_k}(M_k, \Hom_{H_j}({_jH_k}, M_j)),
\]
where the later is further identified with $\Hom_{H_k}(M_k, {_kH_j}\otimes M_j)$ induced by the isomorphism
\[
\rho\colon {_kH_j}\rightarrow \Hom_{H_j}({_jH_k}, H_j).
\]
Hence there is some $\overline M_{jk}\in \Hom_{H_k}(M_k, {_kH_j}\otimes M_j)$ that $M_{jk}\in \Hom_{H_j}({_jH_k}\otimes M_k, M_j)$ corresponds to.

Let $N_k$ be $\coker M_{k,\mathrm{out}}$ as an $H_k$-module. Denote the quotient $M_{(-,k)}\rightarrow N_k$ by
\[
(N_{kj})_j\colon M_{(-,k)}\rightarrow N_k, \quad N_{kj} \colon \,_kH_j\otimes M_j \rightarrow N_k.
\]
We define $F_k^-(M) = (N_r, N_{rs})$ with $(r,s)\in s_k(\Omega)$, where
\[
N_r\coloneqq \begin{cases}
M_r,\quad &\text{if $r\neq k$},\\
N_k,\quad &\text{if $r = k$}
\end{cases}\quad \text{and}
\]
\[
N_{rs}\coloneqq \begin{cases}
M_{rs}\quad &\text{if $(r,s)\in\Omega$ and $s\neq k$,}\\
N_{ks}\quad &\text{if $(r,s)\in \Omega^*$ and $r = k$}.
\end{cases}
\]
Analogously to $F_k^+$, it is clear that $F_k^-$ is also functorial.

\subsection{Some properties of reflection functors}

For $i\in I$, let $S_i$ be the simple $H$-module supported at the vertex $i$. Note that $S_i$ is at the same time the socle and the top (or head) of $E_i$. The following lemma is straightforward.

\begin{lemma}\label{lemma: equivalent conditions}
For any $M\in \rep H$, we have
\[
\Hom_H(M, S_k) = 0 \Leftrightarrow \Hom_H(M, E_k) = 0 \Leftrightarrow\ \text{$M_{k,\mathrm{in}}$ is surjective, and}
\]
\[
\Hom_H(S_k, M) = 0 \Leftrightarrow \Hom_H(E_k, M) = 0 \Leftrightarrow\ \text{$M_{k,\mathrm{out}}$ is injective}.
\]
\end{lemma}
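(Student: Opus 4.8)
The plan is to reduce the lemma to elementary module theory over the local commutative artinian ring $H_k = e_kHe_k\cong K[\varepsilon_k]/(\varepsilon_k^{c_k})$, whose unique simple module is $S_k$ and whose regular module $H_k$ is the injective envelope of $S_k$; in particular $S_k=\operatorname{soc}(H_k)$, giving an inclusion $S_k\hookrightarrow H_k$, and $S_k=\operatorname{top}(H_k)$, giving a surjection $H_k\twoheadrightarrow S_k$.

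First I would compute the four $\Hom$ spaces explicitly. Since $S_k$ and $E_k$ are supported only at the vertex $k$, every morphism $f\colon M\to S_k$ or $f\colon M\to E_k$ vanishes on $M_j$ for $j\neq k$; the compatibility conditions at the arrows incident to $k$ then force the component $f_k$ to be $H_k$-linear and to annihilate $\operatorname{im}(M_{k,\mathrm{in}})$ (the conditions at the arrows out of $k$, when present, being automatic). This yields natural isomorphisms of $K$-vector spaces
\[
\Hom_H(M,S_k)\cong\Hom_{H_k}(M_k/\operatorname{im}(M_{k,\mathrm{in}}),\,S_k),\qquad
\Hom_H(M,E_k)\cong\Hom_{H_k}(M_k/\operatorname{im}(M_{k,\mathrm{in}}),\,H_k).
\]
Dually, a morphism $S_k\to M$ or $E_k\to M$ is determined by the image $m\in M_k$ of the canonical generator at vertex $k$; unwinding the tensor--hom adjunction that defines $M_{k,\mathrm{out}}$ from the structure maps $M_{jk}$ (together with the isomorphism $\rho$ of \eqref{eq: isomorphism rho}), the conditions at the arrows out of $k$ amount exactly to $m\in\ker(M_{k,\mathrm{out}})$, while $H_k$-linearity imposes $\varepsilon_k m=0$ in the $S_k$ case. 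Hence
\[
\Hom_H(S_k,M)\cong\Hom_{H_k}(S_k,\,\ker(M_{k,\mathrm{out}})),\qquad
\Hom_H(E_k,M)\cong\Hom_{H_k}(H_k,\,\ker(M_{k,\mathrm{out}}))\cong\ker(M_{k,\mathrm{out}}).
\]

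Next I would invoke the purely local fact that, for a finitely generated $H_k$-module $N$, the conditions $N=0$, $\Hom_{H_k}(N,S_k)=0$, $\Hom_{H_k}(N,H_k)=0$, $\Hom_{H_k}(S_k,N)=0$ and $\Hom_{H_k}(H_k,N)=0$ are all equivalent: a nonzero finitely generated module over the artinian ring $H_k$ has a nonzero top and a nonzero socle, each $\cong S_k$, and one passes between $S_k$ and $H_k$ by composing with $S_k\hookrightarrow H_k$ or $H_k\twoheadrightarrow S_k$, while $\Hom_{H_k}(H_k,N)\cong N$ by evaluation at $1$. Applying this with $N=M_k/\operatorname{im}(M_{k,\mathrm{in}})$ turns the first line of the lemma into ``$M_{k,\mathrm{in}}$ surjective'', and with $N=\ker(M_{k,\mathrm{out}})$ turns the second line into ``$M_{k,\mathrm{out}}$ injective''.

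I do not expect a genuine obstacle. The only point needing care is the bookkeeping in the first step: checking that the arrow-compatibility conditions for morphisms into $S_k$ and $E_k$ collapse to ``vanishing on $\operatorname{im}(M_{k,\mathrm{in}})$'', and those for morphisms out of $S_k$ and $E_k$ to ``lying in $\ker(M_{k,\mathrm{out}})$''. For the latter one must verify from the adjunction-based definition of $M_{k,\mathrm{out}}$ that $\overline M_{jk}(m)=0$ is equivalent to $M_{\alpha_{jk}^{(l)}}(m)=0$ for all $l$. Once this is established the equivalences follow at once, matching the paper's assertion that the lemma is straightforward.
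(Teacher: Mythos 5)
Your proof is correct; the paper gives no proof of this lemma (it is dismissed as ``straightforward''), and your direct verification is the expected one. The reduction of all four $\Hom$-spaces to the vanishing of a single finitely generated $H_k$-module --- $M_k/\operatorname{im}(M_{k,\mathrm{in}})$ for the first chain and $\ker(M_{k,\mathrm{out}})$ for the second --- is the right bookkeeping, and unwinding the tensor--hom adjunction to show $\Hom_H(E_k,M)\cong\ker(M_{k,\mathrm{out}})$ is the one step genuinely worth writing out, as you do. Two small wording issues, neither fatal. First, ``a nonzero top and a nonzero socle, each $\cong S_k$'' should read ``each a nonzero direct sum of copies of $S_k$''; what matters is only that they are nonzero semisimple. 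Second, the phrase ``one passes between $S_k$ and $H_k$ by composing with $S_k\hookrightarrow H_k$ or $H_k\twoheadrightarrow S_k$'' overstates the role of composition: a nonzero map $N\to H_k$ composed with $H_k\twoheadrightarrow S_k$ can vanish when its image lies in $\operatorname{rad}(H_k)$. The clean justification, which your surrounding sentences already supply, is that each vanishing condition is equivalent to $N=0$: $\Hom_{H_k}(H_k,N)\cong N$ directly; $\Hom_{H_k}(N,S_k)$ factors through $\operatorname{top}N$ and $\Hom_{H_k}(S_k,N)\cong\operatorname{soc}N$, both nonzero when $N\neq 0$; and $\Hom_{H_k}(N,H_k)\neq 0$ for $N\neq 0$ because an inclusion $S_k\hookrightarrow\operatorname{soc}N\subset N$ together with $S_k\hookrightarrow H_k$ extends along the inclusion by self-injectivity of $H_k$.
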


\begin{proposition}[{\cite[Proposition 9.1 and Corollary 9.2]{geiss2017quivers}}]\label{prop: reflections are adjoint}
The pair of reflection functors 
\[
    F_k^+\colon \rep H\rightarrow \rep s_k(H)\quad \text{and}\quad F_k^-\colon \rep s_k(H)\rightarrow \rep H
\]
are (left and right) adjoint (additive) functors. They define inverse equivalences on subcategories
\begin{align*}
\mathcal T^H_k &\coloneqq \{M\in \rep H \mid \Hom_H(M, S_k) = 0\} \subset \rep H\quad \text{and}\\
\mathcal S^{s_k(H)}_k &\coloneqq \{M\in \rep s_k(H) \mid \Hom_{s_k(H)}(S_k, M) = 0\} \subset \rep s_k(H).
\end{align*}
\end{proposition}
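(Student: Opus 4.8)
The plan is to write down explicit unit and counit natural transformations for the would-be adjunction and then read off the equivalence by restricting to the subcategories on which they become isomorphisms. Fix a sink $k$ of $\Omega$, so that $k$ is a source of $s_k(\Omega)$. The first observation is that both composites $F_k^-F_k^+$ and $F_k^+F_k^-$ leave a representation unchanged away from the vertex $k$: for $j\neq k$ the spaces $M_j$ and all structure maps $M_{rs}$ with $r,s\neq k$ are transported verbatim by each functor, and the two bimodule sums involved coincide because $\Omega(k,-)=s_k(\Omega)(-,k)$, so that $M_{(k,-)}=\bigoplus_{j\in\Omega(k,-)}{_kH_j}\otimes M_j$ is literally the object playing the role of $N_{(-,k)}$ on the source side. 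Thus everything happens at $k$. For $M\in\rep H$ one has $(F_k^+M)_k=N_k=\ker M_{k,\mathrm{in}}$; applying $F_k^-$ one forms the cokernel of the map $N_k\to M_{(k,-)}$ produced by the source construction, and the key claim is that this map is exactly the inclusion $(\beta_{jk})_j$ of the sink construction. Granting this, $(F_k^-F_k^+M)_k=M_{(k,-)}/N_k\cong\operatorname{im}M_{k,\mathrm{in}}\subseteq M_k$, and we obtain a morphism $\varepsilon_M\colon F_k^-F_k^+M\to M$ that is the identity at every $j\neq k$ and the inclusion $\operatorname{im}M_{k,\mathrm{in}}\hookrightarrow M_k$ at $k$ (compatibility at the arrows into $k$ is automatic, since those structure maps are the $j$-components of $M_{(k,-)}\twoheadrightarrow\operatorname{im}M_{k,\mathrm{in}}$). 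Dually one builds $\eta_N\colon N\to F_k^+F_k^-N$ for $N\in\rep s_k(H)$, the identity off $k$ and the corestriction $N_k\to\operatorname{im}N_{k,\mathrm{out}}$ at $k$.

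The heart of the argument, and the step I expect to be the main obstacle, is the claim that the structure maps are recovered on the nose — not merely up to isomorphism — by the twofold application; that is, that passing from $\beta_{jk}$ to $N_{jk}$ via \eqref{eq: define sink reflection} and then back through the source-reflection recipe is the identity, and likewise that $P_{k,\mathrm{in}}$ for $P=F_k^-N$ is the canonical quotient onto $\coker N_{k,\mathrm{out}}$. This amounts to a biduality property of the bimodules ${_iH_j}$: the isomorphism $\rho\colon{_iH_j}\xrightarrow{\sim}\Hom_{H_j}({_jH_i},H_j)$ of \eqref{eq: isomorphism rho} and its left-hand analogue must be mutually transpose under the natural evaluation pairing, so that the two tensor--hom adjunctions employed in the sink and source constructions are inverse to one another. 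All of this is elementary linear algebra over the truncated polynomial rings $H_i=K[\varepsilon_i]/(\varepsilon_i^{c_i})$, but it requires careful tracking of the explicit $H_i$- and $H_j$-bases of ${_iH_j}$ and of the $\varepsilon_i$-shifts appearing in $\rho$; this bookkeeping is exactly what is done in \cite[Section 5]{geiss2017quivers} and is where the genuine content sits.

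With the biduality in hand, naturality of $\varepsilon$ and $\eta$ is immediate from the description of $F_k^\pm$ on morphisms (the identity off $k$, the induced map on kernels/cokernels at $k$), and the two triangle identities reduce off $k$ to identities and at $k$ to the statement that a corestriction onto an image followed by the inclusion of that image is the original map (which on kernels and cokernels washes out to the identity). This exhibits $F_k^-$ as left adjoint to $F_k^+$ (equivalently, $F_k^+$ as a right adjoint to $F_k^-$) — the ``left and right adjoint'' functors of the statement — and both are additive since they are built from kernels, cokernels and direct sums.

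It remains to restrict. If $M\in\mathcal T_k^H$ then $\Hom_H(M,S_k)=0$, hence $M_{k,\mathrm{in}}$ is surjective by \Cref{lemma: equivalent conditions}, so $\operatorname{im}M_{k,\mathrm{in}}=M_k$ and $\varepsilon_M$ is an isomorphism; moreover the ``out'' map of $F_k^+(M)$ at $k$ is, by the explicit description above, the inclusion $N_k\hookrightarrow M_{(k,-)}$, which is injective, so $F_k^+(M)\in\mathcal S_k^{s_k(H)}$, again by \Cref{lemma: equivalent conditions}. The dual reasoning shows that $F_k^-$ sends $\mathcal S_k^{s_k(H)}$ into $\mathcal T_k^H$ and that $\eta_N$ is an isomorphism for $N\in\mathcal S_k^{s_k(H)}$. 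Therefore $F_k^+$ and $F_k^-$ restrict to mutually quasi-inverse equivalences between $\mathcal T_k^H$ and $\mathcal S_k^{s_k(H)}$, as claimed.
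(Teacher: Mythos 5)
The paper does not prove this result; it cites it directly to \cite[Proposition 9.1 and Corollary 9.2]{geiss2017quivers}, so there is no in-paper argument to compare against. Your sketch is a reasonable reconstruction of the standard proof, and the overall architecture is sound: everything happens at the vertex $k$, the counit $\varepsilon_M\colon F_k^-F_k^+M\to M$ is the identity off $k$ and the inclusion $\operatorname{im}M_{k,\mathrm{in}}\hookrightarrow M_k$ at $k$, the unit is dual, naturality is formal, the triangle identities reduce to the tautology that passing to the kernel (resp.\ cokernel) of a map and of its corestriction to (resp.\ quotient by) the image gives the same answer, and \Cref{lemma: equivalent conditions} is exactly what turns the unit/counit into isomorphisms on $\mathcal S_k^{s_k(H)}$ and $\mathcal T_k^H$. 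You also get the handedness right — $F_k^-$ is the left adjoint and $F_k^+$ the right adjoint; the reverse adjunction genuinely fails, already for ordinary quiver representations, so the parenthetical ``(left and right)'' in the statement should be read as ``one is the left and the other is the right,'' as you do.

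One small remark: you flag the ``biduality'' of the pairing $\rho$ as the main obstacle, but inspecting the definitions in \Cref{section: reflection functors} shows this is essentially built in. Both the sink recipe (passing from $\beta_{jk}\colon N_k\to{_kH_j}\otimes M_j$ to $N_{jk}\colon{_jH_k}\otimes N_k\to M_j$) and the source recipe (passing from $M_{jk}$ to $\overline M_{jk}$) invoke \emph{the same} isomorphism $\rho\colon{_kH_j}\to\Hom_{H_j}({_jH_k},H_j)$ composed with \emph{the same} tensor--hom adjunction, just read in the two directions. So once one checks the harmless identification of ${_kH_j}$ as a sub-bimodule of $H$ with ${_kH_j}$ as a sub-bimodule of $s_k(H)^*$ (both are the free bimodule on the arrows between $j$ and $k$, with the same $H_k$- and $H_j$-actions), the ``out'' map of $F_k^+M$ at $k$ is literally the kernel inclusion $N_k\hookrightarrow M_{(k,-)}$ and no further computation with bases is needed. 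This also immediately gives $F_k^+(\rep H)\subset\mathcal S_k^{s_k(H)}$ for \emph{all} $M$, not just rigid ones, which you use. Your argument is correct; it is just somewhat more cautious than necessary at that step.
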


We now focus on the actions of reflection functors on locally free modules.

\begin{lemma}[{\cite[Lemma 3.6]{geiss2018icm}}]\label{lemma: locally free and rigid implies surjective}
Suppose that $k$ is a sink (resp. source) of $\Omega$ and $M$ a locally free rigid $H$-module, with no direct summand isomorphic to $E_k$. Then we have $\Hom_H(M, E_k) = 0$ (resp. $\Hom_H(E_k, M) = 0$). In particular, the map $M_{k,\mathrm{in}}$ (resp. $M_{k,\mathrm{out}}$) is surjective (resp. injective).
\end{lemma}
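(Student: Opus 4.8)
The plan is to treat the sink case --- the source case being entirely dual --- and, assuming $k$ is a sink, to reduce to $M$ indecomposable, dispose of an easy subcase by a splitting argument, and then isolate the genuine point.

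\emph{Reductions.} We prove $\Hom_H(M,E_k)=0$; by \Cref{lemma: equivalent conditions} this already gives the surjectivity of $M_{k,\mathrm{in}}$. Since $k$ is a sink, no arrow of $Q^\circ$ starts at $k$, so the only paths of $Q$ starting at $k$ are the powers of $\varepsilon_k$; hence $e_jHe_k=0$ for $j\neq k$ while $e_kHe_k=H_k$, i.e. the indecomposable projective $P_k=He_k$ equals $E_k$, and in particular $E_k$ is projective. As $\Hom_H(-,E_k)$ is additive and a direct summand of a locally free (resp. rigid) module is again locally free (resp. rigid) --- summands of free $H_i$-modules are free by the Krull--Schmidt theorem over $H_i\cong K[\varepsilon]/(\varepsilon^{c_i})$ --- we may assume $M$ is indecomposable; then $M\not\cong E_k$.

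\emph{Building a copy of $E_k$ inside $M$.} Suppose, for contradiction, $\Hom_H(M,E_k)\neq 0$. If some $f\colon M\to E_k$ is surjective, then $0\to\ker f\to M\to E_k\to 0$ splits because $E_k$ is projective, so $E_k$ is a direct summand of $M$ --- a contradiction. Otherwise every $f\colon M\to E_k$ has image inside the unique maximal submodule $\operatorname{rad}E_k=\varepsilon_kE_k$. By \Cref{lemma: equivalent conditions} we also have $\Hom_H(M,S_k)\neq 0$, so there is a surjection $\bar f\colon M\twoheadrightarrow S_k$; since $E_k$ is the projective cover of $S_k$, with cover map $\pi\colon E_k\twoheadrightarrow S_k$, and $E_k$ is projective, $\pi$ lifts through $\bar f$ to a map $g\colon E_k\to M$ with $\bar fg=\pi$. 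The element $x:=g(e_k)\in M_k$ satisfies $\bar f(x)=\pi(e_k)\neq 0$, so $x\notin\ker\bar f\supseteq\operatorname{rad}M\supseteq\varepsilon_kM_k$; as $M_k$ is a free $H_k$-module, $x$ extends to an $H_k$-basis of $M_k$, so $g$ is injective and $C:=M/g(E_k)$ is locally free with $\rk C=\rk M-\alpha_k$. Thus we have a short exact sequence $0\to E_k\xrightarrow{\ g\ }M\to C\to 0$ of locally free modules with $E_k$ projective, and it suffices to show it splits (which again forces $E_k\mid M$).

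\emph{The obstacle.} Rigidity goes part of the way: applying $\Hom_H(M,-)$ to this sequence and using $\operatorname{Ext}^1_H(M,M)=0$ together with $\operatorname{Ext}^2_H(M,E_k)=0$ (the latter because $\operatorname{proj.dim}M\le 1$ for locally free $M$ over the $1$-Iwanaga--Gorenstein algebra $H$), one gets $\operatorname{Ext}^1_H(M,C)=0$. On the other hand, writing $J:=\operatorname{im}M_{k,\mathrm{in}}\subseteq M_k$ (so that $\Hom_H(M,E_k)=\Hom_{H_k}(M_k/J,H_k)$, since $E_k$ is concentrated at the sink $k$ with trivial arrow‑action), the sequence splits if and only if $\varepsilon_k^{c_k-1}x\notin J$ --- both conditions say that the rank‑one free summand $H_kx$ of $M_k$ survives, modulo $J$, as an $H_k$‑direct‑summand, using that $H_k\cong K[\varepsilon]/(\varepsilon^{c_k})$ is self‑injective. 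The real content of the lemma, and the step I expect to be the main obstacle, is to combine $\operatorname{Ext}^1_H(M,C)=0$ with the finer structure theory of locally free $H$-modules to conclude that $\varepsilon_k^{c_k-1}x\notin J$, equivalently that the extension class of $0\to E_k\to M\to C\to 0$ in $\operatorname{Ext}^1_H(C,E_k)$ vanishes; a bare dimension count with the Euler form $\langle-,-\rangle_H$ is only tautologically consistent with this and does not suffice. This is the argument of \cite[Lemma 3.6]{geiss2018icm}. (When $c_k=1$ it is trivial: then $\operatorname{rad}E_k=0$, so already $M=g(E_k)\oplus\ker\bar f$ --- the classical situation for quiver representations --- and all the difficulty sits at the ``thick'' vertices $c_k\ge 2$.)
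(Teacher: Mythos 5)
The paper's own ``proof'' of this lemma consists only of a citation to \cite[Lemma 3.6]{geiss2018icm} for the sink case together with the one-line remark that the source case is dual; no argument is reproduced. Your proposal goes further and your reductions are sound: since $k$ is a sink, $e_jHe_k=0$ for $j\neq k$ so $E_k=He_k=P_k$ is projective; $\Hom_H(-,E_k)$ is additive, so one may assume $M$ indecomposable; a surjective $f\colon M\to E_k$ splits by projectivity; otherwise $\Hom_H(M,S_k)\neq 0$ by \Cref{lemma: equivalent conditions}, and lifting the projective cover $\pi\colon E_k\twoheadrightarrow S_k$ through $\bar f$ gives $g\colon E_k\to M$ with $x=g(e_k)\notin\varepsilon_kM_k$, so $H_kx$ is a free rank-one $H_k$-summand of $M_k$ and $C=M/g(E_k)$ is locally free of rank $\rk M-\alpha_k$. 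The characterization ``the sequence splits iff $\varepsilon_k^{c_k-1}x\notin J$'' is also essentially right, using that $H_k$ is self-injective.

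However, the decisive step --- deriving from $\operatorname{Ext}^1_H(M,M)=0$ that the short exact sequence $0\to E_k\to M\to C\to 0$ splits, equivalently that its class in $\operatorname{Ext}^1_H(C,E_k)$ vanishes, equivalently that $\varepsilon_k^{c_k-1}x\notin J$ --- is not carried out. You correctly observe that the vanishing $\operatorname{Ext}^1_H(M,C)=0$ obtained from rigidity and $\operatorname{proj.dim}M\le 1$ does not by itself force the relevant class in $\operatorname{Ext}^1_H(C,E_k)$ to die, and you explicitly defer to \cite[Lemma 3.6]{geiss2018icm} exactly at this point. That step is the actual mathematical content of the lemma, so what you have is a correct and well-organized reduction, not a proof. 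Since the paper itself defers entirely to the same reference, you are no worse off than the paper; but taken on its own terms the proposal has a genuine gap that it names but does not close.
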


\begin{proof}
The case where $k$ is a sink is \cite[Lemma 3.6]{geiss2018icm}. The other case is simply a dual version.
\end{proof}

Let $L = \mathbb Z^n$. We think of rank vectors of locally free $H$-modules as living in $L$ via $\mathbb N^n\subset \mathbb Z^n$. For $i\in I$, define the \emph{reflection}
\[
s_k\colon L\rightarrow L,\quad s_k(\alpha_i)  \coloneqq \alpha_i - c_{ki} \alpha_k\quad \text{for any $i\in I$}.
\]

\begin{proposition}[{\cite[Proposition 9.6]{geiss2017quivers}} and {\cite[Lemma 3.5]{geiss2018icm}}]\label{prop: reflection preserve locally free and rigid}
If $k$ is a sink (resp. source) of $\Omega$ and $M$ is a locally free rigid $H$-module, then $F_k^+(M)$ (resp. $F_k^-(M)$)
is locally free and rigid. If furthermore $M_{k,\mathrm{in}}$ is surjective (resp. $M_{k,\mathrm{out}}$ is injective), then
\[
\rk F_k^\pm (M) = s_k(\rk M).
\]
\end{proposition}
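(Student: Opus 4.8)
The plan is to treat the sink case; the source case is formally dual, obtained by replacing $F_k^+$, $\ker$, $M_{k,\mathrm{in}}$, ``$M_k$ projective'', $\Hom_H(-,S_k)$ and $\mathcal T_k$ throughout by $F_k^-$, $\coker$, $M_{k,\mathrm{out}}$, ``$M_k$ injective'', $\Hom_H(S_k,-)$ and $\mathcal S_k$. First I would reduce to the case where $M$ has no direct summand isomorphic to $E_k$: by Krull--Schmidt write $M=M'\oplus E_k^{\oplus a}$ with $M'$ having no $E_k$-summand; then $M'$ is again locally free (a summand of a locally free module is locally free, as each $H_i$ is local) and rigid, and since $F_k^+(E_k)=0$ (immediate from the definition, as $(E_k)_j=0$ for $j\neq k$) we get $F_k^+(M)=F_k^+(M')$ by additivity. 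By \Cref{lemma: locally free and rigid implies surjective} together with \Cref{lemma: equivalent conditions}, $M'_{k,\mathrm{in}}$ is surjective, i.e.\ $M'\in\mathcal T_k^H$. Hence it suffices to prove, for locally free $M\in\mathcal T_k^H$: (i) $F_k^+(M)$ is locally free with $\rk F_k^+(M)=s_k(\rk M)$, and (ii) if moreover $M$ is rigid, then $F_k^+(M)$ is rigid. This also yields the ``furthermore'' clause of the Proposition, since $M_{k,\mathrm{in}}$ surjective forces $a=0$, hence $M=M'$.

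For (i) I would use the defining exact sequence of $H_k$-modules $0\to N_k\to M_{(k,-)}\xrightarrow{M_{k,\mathrm{in}}}M_k\to 0$, where $(F_k^+M)_k=N_k$ and $(F_k^+M)_r=M_r$ for $r\neq k$. Since ${}_kH_j$ is free over $H_k$ (of rank $-c_{kj}$) and $M_j$ is free over $H_j$, each ${}_kH_j\otimes_{H_j}M_j$, hence $M_{(k,-)}$, is free over $H_k$. Now $H_k=K[\varepsilon_k]/(\varepsilon_k^{c_k})$ is a (commutative) Frobenius algebra, so free $H_k$-modules are both projective and injective; in particular $M_k$ is projective, the sequence splits, and $N_k$ is a direct summand of a free $H_k$-module, hence free. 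Therefore $F_k^+(M)$ is locally free, and comparing $K$-dimensions in the split sequence shows $N_k$ has rank $\sum_{j\in\Omega(k,-)}(-c_{kj})r_j-r_k$, where $\rk M=(r_1,\dots,r_n)$; one checks this equals the $k$-th entry of $s_k(\rk M)=\rk M-\big(\sum_{i\in I}c_{ki}r_i\big)\alpha_k$, using that for $i\neq k$ one has $c_{ki}\neq 0$ precisely when $i\in\Omega(k,-)$ (as $k$ is a sink).

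For (ii) the point is to transport $\mathrm{Ext}^1$ along the equivalence $F_k^+\colon\mathcal T_k^H\xrightarrow{\ \sim\ }\mathcal S_k^{s_k(H)}$ of \Cref{prop: reflections are adjoint}. Both $\mathcal T_k^H=\{X:\Hom_H(X,S_k)=0\}$ and $\mathcal S_k^{s_k(H)}=\{X:\Hom_{s_k(H)}(S_k,X)=0\}$ are closed under extensions (by left-exactness of $\Hom$), so for objects of these subcategories the Yoneda $\mathrm{Ext}^1$ computed inside them coincides with $\mathrm{Ext}^1$ in $\rep H$, resp.\ $\rep s_k(H)$. Next, I would check that $F_k^+$ carries a short exact sequence whose three terms lie in $\mathcal T_k^H$ to a short exact sequence (automatically landing in $\mathcal S_k^{s_k(H)}$): at vertices $r\neq k$ this is tautological, and at vertex $k$ it follows from the snake lemma applied to the commutative square with exact rows $0\to A_{(k,-)}\to B_{(k,-)}\to C_{(k,-)}\to 0$ (exact because ${}_kH_j$ is flat over $H_j$) and $0\to A_k\to B_k\to C_k\to 0$, whose vertical maps $A_{k,\mathrm{in}},B_{k,\mathrm{in}},C_{k,\mathrm{in}}$ are all surjective, forcing the kernels into a short exact sequence. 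Since $F_k^-$ likewise preserves short exact sequences among objects of $\mathcal S_k^{s_k(H)}$ and the two functors are quasi-inverse, the induced maps on Yoneda $\mathrm{Ext}^1$ are mutually inverse isomorphisms. Combining these, for locally free rigid $M\in\mathcal T_k^H$,
\[
\mathrm{Ext}^1_{s_k(H)}\big(F_k^+(M),F_k^+(M)\big)\;\cong\;\mathrm{Ext}^1_H(M,M)\;=\;0,
\]
so $F_k^+(M)$ is rigid.

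The main obstacle is step (ii): an equivalence between the non-abelian subcategories $\mathcal T_k^H$ and $\mathcal S_k^{s_k(H)}$ need not preserve $\mathrm{Ext}^1$ a priori, so one must genuinely verify both that $F_k^+$ is exact for the natural (inherited) exact structures and that these $\mathrm{Ext}$ groups agree with the ambient ones. Everything else is routine once one notes that $H_k$ is Frobenius, so that free $H_k$-modules are projective and injective and the defining (co)kernel sequence splits. (An alternative route to (ii): granting that locally free $H$-modules have projective dimension $\le 1$ and that their Euler form equals $\langle\rk{-},\rk{-}\rangle_H$, one may instead combine the $\Hom$-isomorphism of \Cref{prop: reflections are adjoint} with the $s_k$-invariance of the symmetrization of $\langle-,-\rangle_H$ to conclude $\dim\mathrm{Ext}^1(F_k^+M,F_k^+M)=\dim\mathrm{Ext}^1(M,M)$ directly.)
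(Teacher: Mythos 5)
Your proposal is correct. Note that the paper does not give its own proof of this proposition: it is stated with citations to \cite{geiss2017quivers} and \cite{geiss2018icm} only. The local-freeness and rank computation (your step (i)) does appear implicitly in this paper, inside the proof of \Cref{lemma: recursion of f polynomial general}, and it is exactly your argument: the defining exact sequence $0\to N_k\to M_{(k,-)}\to M_k\to 0$ of $H_k$-modules, freeness of ${}_kH_j\otimes M_j$, and then a rank count. (Minor cosmetic remark: the Frobenius property of $H_k$ is not really what you use there; the sequence splits because $M_k$ is free hence projective, and $N_k$ is then free because $H_k$ is local, so finitely generated projective implies free.) Your reduction to the $E_k$-free case via Krull--Schmidt and $F_k^+(E_k)=0$ is clean and also correctly explains why the rank identity only holds under the surjectivity hypothesis.

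For rigidity (your step (ii)) the paper just cites, so your argument is an independent self-contained proof. The key point you correctly identify — that a mere equivalence of the full subcategories $\mathcal T_k^H$ and $\mathcal S_k^{s_k(H)}$ does not automatically preserve $\mathrm{Ext}^1$ — is the genuine subtlety, and you handle it properly: ${}_kH_j$ is free, hence flat, over $H_j$, so the top row in your snake-lemma diagram is exact; surjectivity of all three $(-)_{k,\mathrm{in}}$ maps kills the cokernels; and the resulting exactness of $F_k^+$ (and dually $F_k^-$) on the extension-closed subcategories, combined with $GF\cong\mathrm{id}$ and $FG\cong\mathrm{id}$, yields the Ext-isomorphism. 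The alternative via the Euler form and the $s_k$-invariance of the symmetric bilinear form $\tfrac12 DC$ is also sound, though it requires importing the homological facts about locally free modules that you mention. Either route works; the snake-lemma version has the advantage of using only what this paper has already set up in \Cref{prop: reflections are adjoint}.
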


\begin{remark}\label{rmk: reflection preserve indecomposable}
We remark that the functor $F_k^+$ (resp. $F_k^-$) preserves indecomposability if $M_{k,\mathrm{in}}$ is surjective (resp. $M_{k,\mathrm{out}}$ is injective) by \Cref{prop: reflections are adjoint} and \Cref{lemma: equivalent conditions}.
\end{remark}

\section{Locally free Caldero--Chapoton functions}\label{section: lf cc functions}

\subsection{Locally free Caldero--Chapoton functions}
Let $H = H_\mathbb C(C, D, \Omega)$.

\begin{definition}\label{def: locally free grassmannian}
For $M\in \replf H$ and a rank vector $\mathbf r = (r_i)_{i\in I}$, the \emph{locally free quiver Grassmannian} is
\[
\grlf(\mathbf r, M) \coloneqq \{N \mid \text{$N$ is a locally free submodule of $M$ and $\rk N = \mathbf r$}\}.
\]
\end{definition}

\begin{remark}
It is clear that the set $\grlf (\mathbf r, M)$ can be realized as a locally closed subvariety of the product of ordinary Grassmannians $\prod_{i\in I}\mathrm{Gr}(c_ir_i, M_i)$. We take its analytic topology and denote by $\chi(\cdot)$ the Euler characteristic. 
\end{remark}

\begin{definition}
For $M\in \replf H$, we define its \emph{locally free $F$-polynomial} as
\[
F_M(y_1, \dots, y_n) \coloneqq \sum_{\mathbf r\in \mathbb N^n} \chi(\grlf(\mathbf r, M))\prod_{i=1}^n y_i^{r_i}\in \mathbb Z[y_1, \dots, y_n].
\]
\end{definition}

Recall that we have defined in \Cref{subsection: lf cc function} the bilinear form $\langle -,- \rangle_H \colon \mathbb Z^n \times \mathbb Z^n\rightarrow \mathbb Z$ and the skew-symmetrizable matrix $B = (b_{ij})$ associated to $(C,\Omega)$.

\begin{definition}
For $M\in \replf H$ with $\rk M = (m_i)_{i\in I}$, the associated \emph{locally free Caldero--Chapoton function} is the Laurent polynomial
\[
X_M \coloneqq \sum_{\mathbf r\in \mathbb N^n} \chi(\grlf(\mathbf r, M))\prod_{i=1}^n v_i^{-\langle \mathbf r, \alpha_i \rangle_H - \langle \alpha_i, \rk M - \mathbf r\rangle_H}\in \mathbb Z[x_1^\pm, \dots, x_n^\pm],
\]
where $v_i \coloneqq x_i^{1/c_i}$.
\end{definition}

\begin{remark}\label{rmk: express cc function by f polynomial}
Using the $F$-polynomial $F_M$, the Caldero--Chapoton function $X_M$ can be rewritten as
\begin{align*}
X_M &= \prod_{i=1}^n x_i^{-m_i + \sum_{j=1}^n [-b_{ij}]_+m_j} \cdot \sum_{\mathbf r\in \mathbb N^n} \chi(\grlf(\mathbf r, M))\prod_{i=1}^n x_i^{\sum_{j=1}^n b_{ij}r_j}\\
&= \prod_{i=1}^n x_i^{-m_i + \sum_{j=1}^n [-b_{ij}]_+m_j} \cdot F_M(\hat y_1, \dots, \hat y_n),
\end{align*}
where $\hat y_j = \prod_{i=1}^n x_i^{b_{ij}}$. Another way to write $X_M$ is
\[
X_M = \prod_{i=1}^n x_i^{-m_i} \cdot \sum_{\mathbf r\in \mathbb N^n} \chi(\grlf(\mathbf r, M))\prod_{i=1}^n x_i^{\sum_{j=1}^n [-b_{ij}]_+m_j + b_{ij}r_j}.
\]
We note that every term in the summation is an actual monomial since $[-b_{ij}]_+m_j+b_{ij}r_j\geq 0$ because we need $r_j\leq m_j$ for the quiver Grassmannian to be non-empty. Moreover, for $\mathbf r = 0$ and $\mathbf r = \rk M$, $\chi(\grlf(\mathbf r, M)) = 1$ and the two corresponding monomials are coprime. Therefore $\prod_{i=1}^nx_i^{m_i}$ can be characterized as the minimal denominator when expressing $X_M = f/g$ as a quotient of a polynomial $f$ and a monomial $g$. We thus call $\rk M$ the \emph{$\mathbf d$-vector} of the Laurent polynomial $X_M$.
\end{remark}

\begin{example}\label{ex: cc function of E_k}
For $k\in I$ and $E_k\in \replf H$, the only non-empty locally free quiver Grassmannians are $\grlf(0, E_k) = \{0\}$ and $\grlf(\alpha_k, E_k) = \{E_k\}$. Thus we have
\[
X_{E_k} = \prod_{i=1}^n v_i^{-\langle \alpha_i, \alpha_k \rangle_H} + \prod_{i=1}^n v_i^{-\langle \alpha_k, \alpha_i\rangle_H} = x_k^{-1}\left( \prod_{i=1}^n x_i^{[b_{ik}]_+} + \prod_{i=1}^n x_i^{[-b_{ik}]_+}\right).
\]
\end{example}

\subsection{The key recursion}
The following is the key proposition on the recursion of $F$-polynomials under reflections.

\begin{proposition}\label{lemma: recursion of f polynomial general} 
Let $M\in \replf H$ be of rank $(m_i)_{i\in I}$ and $k$ be a sink of $H$. Suppose that the map
\[
M_{k,\mathrm{in}}\colon \bigoplus_{j\in \Omega(k,-)}{_kH_j}\otimes M_j\rightarrow M_k
\]
is surjective. Then the reflection $M' \coloneqq F_k^+(M)$ is locally free over $s_k(H)$ with the rank vector $(m_i')_{i\in I}$ such that $m_i'=m_i$ for $i\neq k$ and 
\[
    m_k' = -m_k + \sum_{j\in \Omega(k,-)} b_{kj} m_j.
\]
Their $F$-polynomials satisfy the equation
\[
   (1 + y_k)^{-m_k} F_M(y_1,\dots, y_n) = F_{M'}(y_1', \dots, y_n').
\]
where 
\[
    y_i' = y_iy_k^{b_{ki}}/(y_k+1)^{b_{ki}}\ \text{for $i\neq k$,\quad and} \quad  y_k' = y_k^{-1}.
\]
\end{proposition}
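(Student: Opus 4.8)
The plan is to reduce everything to a statement about how locally free quiver Grassmannians of $M$ relate to those of $M' = F_k^+(M)$ at the sink $k$. Since $k$ is a sink of $\Omega$, in the quiver $Q^\circ$ there are only arrows into $k$, so a subrepresentation $N \subseteq M$ is determined by a choice of subspace $N_i \subseteq M_i$ at each vertex together with the condition that $M_{k,\mathrm{in}}$ carries $\bigoplus_{j \in \Omega(k,-)} {_kH_j} \otimes N_j$ into $N_k$; local freeness adds the requirement that each $N_i$ is $H_i$-free. First I would fix the data $(N_i)_{i \neq k}$ — equivalently a point of the product of the "off-$k$" locally free Grassmannians — and analyze the fibre consisting of admissible $H_k$-free submodules $N_k$ with $\mathrm{im}(M_{k,\mathrm{in}}|_{\bigoplus {_kH_j}\otimes N_j}) \subseteq N_k \subseteq M_k$ of a prescribed rank $r_k$. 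Because $M_{k,\mathrm{in}}$ is surjective, restricting to a sub-collection $(N_j)$ gives a subspace $U \subseteq M_k$ whose $H_k$-module structure is again free (this is where I would use the local-freeness bookkeeping of $H_k = \mathbb{C}[\varepsilon]/(\varepsilon^{c_k})$), and the locally free $N_k$ containing $U$ of rank $r_k$ form a locally free Grassmannian in the quotient $M_k/U$, which is itself $H_k$-free.

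The combinatorial heart is then the following: on the $F$-polynomial level, summing over $\mathbf r = (r_i)$ with $r_i$ fixed for $i \neq k$ and letting $r_k$ vary produces a factor that is a geometric-type sum of Euler characteristics of these nested Grassmannians. I expect this to collapse, via the known computation that the locally free Grassmannian of an $H_k$-free module of rank $a$ inside an $H_k$-free module of rank $b$ (with the inclusion "split" at the level of $\varepsilon$-towers) is a point or, more precisely, that $\sum_{r_k} \chi(\cdots)\, y_k^{r_k}$ equals $(1+y_k)$ raised to the appropriate difference of ranks. Concretely the exponent will be $b$(minus the rank of $U$) on one side, and after reindexing $r_k \leftrightarrow$ (rank of the "cokernel part") this is exactly what turns $y_k^{r_k}$ into $y_k'^{\,r_k'} = y_k^{-r_k'}$ together with the prefactor $(1+y_k)^{-m_k}$. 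The rank-vector formula $m_k' = -m_k + \sum_{j \in \Omega(k,-)} b_{kj} m_j$ is already supplied by \Cref{prop: reflection preserve locally free and rigid} (here $\rk F_k^+(M) = s_k(\rk M)$ and $c_{kj} = -b_{kj}$ since $k$ is a sink, so $(j,k) \in \Omega$), and one only needs that $M'$ is again locally free, which that proposition also gives once surjectivity of $M_{k,\mathrm{in}}$ is in hand — note surjectivity is exactly the hypothesis, equivalently $\Hom_H(M, E_k) = 0$ by \Cref{lemma: equivalent conditions}.

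For the change of variables I would proceed as follows. Write $F_M(\mathbf y) = \sum_{\mathbf r} \chi(\grlf(\mathbf r, M)) \prod_i y_i^{r_i}$ and split the sum as $\sum_{(r_i)_{i \neq k}} \bigl(\prod_{i \neq k} y_i^{r_i}\bigr) \bigl(\sum_{r_k} \chi(\grlf(\mathbf r, M))\, y_k^{r_k}\bigr)$. Using the fibration over $\grlf((r_i)_{i\neq k}, M')$ — here is where the explicit description of $F_k^+$ from \Cref{section: reflection functors} enters, identifying subrepresentations of $M'$ off $k$ with the same data for $M$ and matching $N_k = \ker M_{k,\mathrm{in}}$ with the fibre structure — the inner sum factors as $\chi(\grlf((r_i'), M'))$ times a universal polynomial in $y_k$ depending only on the two ranks involved, and that universal polynomial is $(1+y_k)^{e}$ for the correct $e$. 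Substituting and collecting the powers of $y_k$ that get attached to each $y_i$ with $i \neq k$ yields precisely $y_i' = y_i y_k^{b_{ki}}/(y_k+1)^{b_{ki}}$ and $y_k' = y_k^{-1}$, with the overall $(1+y_k)^{-m_k}$ coming out front.

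The main obstacle I anticipate is the fibre computation at the sink, i.e. showing rigorously that the locally free Grassmannians parametrizing admissible $H_k$-free $N_k$ with $U \subseteq N_k \subseteq M_k$ have Euler characteristics assembling into exactly $(1+y_k)^{e}$, rather than something more complicated. The subtlety is that the inclusion $U \subseteq M_k$ of $H_k$-modules need not split as a map of $H_k$-modules even when both are free, so one must argue either that it does split after the surjectivity hypothesis trivializes the relevant obstruction, or compute $\chi$ directly using a torus action on the $\varepsilon$-towers (a $\mathbb{C}^*$-action scaling the generators) and count fixed points — the fixed loci being products of ordinary Grassmannians whose Euler characteristics are binomial coefficients summing to a power of $(1+y_k)$. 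I would handle this by setting up such a torus action so that the fixed points are "monomial" submodules, reducing the count to a lattice-path / binomial identity; this step is the one I would write out in full detail, as everything else is formal manipulation of the resulting generating function.
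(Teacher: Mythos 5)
Your high-level strategy --- fix $(N_i)_{i\neq k}$, study the fibre of admissible free $N_k$, and fold the resulting generating function into a power of $(1+y_k)$ --- is the same skeleton as the paper's proof, but there is a genuine gap in the way you set up the fibre, and a second gap in how the two sides of the identity are supposed to match.

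First, your assertion that $U := M_{k,\mathrm{in}}\bigl(\bigoplus_{j}{_kH_j}\otimes N_j\bigr)$ is an $H_k$-free module is false. The source $\bigoplus_j {_kH_j}\otimes N_j$ is indeed free over $H_k$, but the image of a free $H_k$-module under an $H_k$-linear map need not be free: for $H_k = \mathbb C[\varepsilon]/(\varepsilon^{c_k})$ with $c_k\geq 2$, multiplication by $\varepsilon$ on $H_k$ has image $\varepsilon H_k$, which is not free. The paper deals with exactly this by stratifying $\grlf(\mathbf e, M)$ according to $r = \operatorname{rank} E(U)$, the rank of the \emph{injective hull} of $U$, not of $U$ itself; the fibre over a fixed $(N_i)_{i\neq k}$ then has Euler characteristic $\binom{m_k-r}{e_k-r}$ (Proposition \ref{prop: euler char of fiber}), and it is this $r$, not the rank of a (nonexistent) free part of $U$, that governs the exponent of $(1+y_k)$. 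Your ``universal polynomial is $(1+y_k)^e$'' claim therefore has the right flavour but the wrong $e$ unless this point is corrected, and the later remark about splitting obstructions between free modules is aimed at a problem that does not actually arise --- the real difficulty is that $U$ is not free at all.

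Second, even granting a correct fibre computation, you have not explained why the sum over $r_k$ on the $M$ side is term-by-term identifiable with the sum on the $M'$ side after the change of variables. The paper's Step IV is the crux: given $(N_i)_{i\neq k}$, the exact sequence $0\to(M'_{k,\mathrm{out}})^{-1}(N_{(-,k)})\to N_{(k,-)}\to M_{k,\mathrm{in}}(N_{(k,-)})\to 0$ forces $\operatorname{rank} E\bigl(M_{k,\mathrm{in}}(N_{(k,-)})\bigr)+\operatorname{rank} F\bigl((M'_{k,\mathrm{out}})^{-1}(N_{(-,k)})\bigr)=\sum_j b_{kj}e_j$, which is precisely what makes the strata $Z_{\mathbf e';r}(M)$ and $X_{\mathbf e';s}(M')$ coincide when $r+s=\sum_j b_{kj}e_j$. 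Your proposal gestures at ``matching $N_k=\ker M_{k,\mathrm{in}}$ with the fibre structure'' but never states this complementary-rank identity, without which the two $F$-polynomials cannot be aligned. Your torus-action idea for the fibre Euler characteristics is a reasonable alternative to the paper's affine-bundle tower in Proposition \ref{prop: euler char of fiber}, but it can only be set up once you know the correct invariant ($\operatorname{rank} E(U)$ on the sink side, $\operatorname{rank} F$ of the preimage on the source side) to stratify by.
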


\begin{proof}
The first half of the statement is simply a recast of \Cref{prop: reflection preserve locally free and rigid} without the hypothesis and conclusion on the rigidity. Explicitly, we observe that $M_k'$ and $M_k$ naturally fit into the following exact sequence (of free $H_k$-modules)
\begin{equation}\label{eq: an short exact sequence}
0 \longrightarrow  M_k' \xlongrightarrow{M'_{k,\mathrm{out}}} \bigoplus_{j\in \Omega(k,-)} {_kH_j}\otimes M_j  \xlongrightarrow{M_{k,\mathrm{in}}} M_k \longrightarrow 0.    
\end{equation}
Since $M_j$ is free over $H_j$ of rank $m_j$, we have that for any $j\in \Omega(k,-)$, the bimodule $_kH_j\otimes M_j$ is isomorphic to $_kH_j ^{\oplus m_j}$, thus a free left $H_k$-module of rank $b_{kj}m_j$. Then the calculation on $m_k'$ follows from the exact sequence.

Next we prove the recursion on $F$-polynomials.

\textbf{Step I}. Let $\mathbf e = (e_i)_i\in \mathbb N^I$ be a rank vector. Decompose $\grlf(\mathbf e, M)$ into constructible subsets $\tilde Z_{\mathbf e; r}(M)$ as follows. Let $N\subset M$ be a locally free submodule. Denote
\[
N_{(k,-)}\coloneqq \bigoplus_{j\in \Omega(k,-)} {_kH_j}\otimes N_j.
\]
Then $M_{k, \mathrm{in}}(N_{(k,-)})$ is an $H_k$-submodule of $M_k$. For $r\in \mathbb N$, let $\tilde Z_{\mathbf e;r}(M)$ be the subset of $\grlf(\mathbf e, M)$ consisting of $N\subset M$ such that
\[
\operatorname{rank} E(M_{k,\mathrm{in}}(N_{(k,-)})) = r.
\]
where $E(\cdot)$ denotes the injective hull (of an $H_k$-module).
Then $\grlf(\mathbf e, M)$ is a disjoint union of (finitely many) $\tilde Z_{\mathbf e;r}(M)$ when $r$ runs over $\mathbb N$ and thus
\[
\chi(\grlf(\mathbf e, M)) = \sum_{r\in \mathbb N} \chi(\tilde Z_{\mathbf e;r}(M)).
\]

\textbf{Step II}. Meanwhile for $M'\in \replf s_k(H)$, a rank vector $\mathbf e$ and $s\in \mathbb N$, let $\tilde X_{\mathbf e;s}(M')$ be the constructible subset of $\grlf(\mathbf e, M')$ consisting of locally free submodules $N\subset M'$ such that 
\[
\operatorname{rank}{F((M_{k,\mathrm{out}}')^{-1}(N_{(-,k)}))} = s,
\]
where we denote
\[
N_{(-,k)} \coloneqq \bigoplus_{j\in s_k(\Omega)(-,k)} {_kH_j}\otimes N_j,
\]
and $F(\cdot)$ stands for the (isomorphism class of) maximal free submodule of an $H_k$-module. Decomposing $\grlf(\mathbf e, M')$ into subsets $\tilde X_{\mathbf e;s}(M')$ where $s$ runs over $\mathbb N$,
we have
\[
\chi(\grlf(\mathbf e, M')) = \sum_{s\in \mathbb N} \chi(\tilde X_{\mathbf e;s}(M')).
\]

\textbf{Step III.}
Let $\mathbf e'$ denote the rank vector (with $n-1$ entries) obtained from $\mathbf e$ by forgetting the $k$-th component. Define for $r\in \mathbb N$ the subset
\[
Z_{\mathbf e';r}(M) \subset \prod_{i\neq k} \grlf(e_i, M_i) \coloneqq \prod_{i\neq k} \{(N_i)_{i\neq k} \mid N_i\subset M_i,\ \text{$N_i$ is free of rank $e_i$}\}
\]
such that
\begin{enumerate}
    \item $(N_i)_{i\neq k}$ is closed under the actions of arrows in (the quiver of) $H$ that are not incident to $k$;
    \item The injective hull of $M_{k,\mathrm{in}}(N_{(k,-)})$ is of rank $r$.
\end{enumerate}
There is the natural forgetful map
\[
    \pi\colon \tilde Z_{\mathbf e; r}(M) \rightarrow Z_{\mathbf e'; r}(M),\quad N \mapsto (N_i)_{i\neq k}.
\]
The fiber over a point $(N_i)_{i\neq k}\in Z_{\mathbf e';r}(M)$ is
\[
\mathcal V = \mathcal V (e_k, M_{k,\mathrm{in}}(N_{(k,-)}), M_k) \coloneqq \{N_k \mid \text{$M_{k,\mathrm {in}}(N_{(k,-)}) \subset N_k \subset M_k$, $N_k$ is free of rank $e_k$}\}.
\]
The space $\mathcal V$ is computed in \Cref{prop: euler char of fiber}. According to its description in the proof, $\mathcal V$ is determined (up to isomorphism) by the isomorphism class of $M_{k, \mathrm{in}}(N_{(k,-)})$ as $H_k$-module. This means we can if necessary decompose $Z_{\mathbf e'; r}(M)$ further into (finitely many) locally closed subsets $\coprod_{j\in J} Z_j$ so that each $\pi^{-1} (Z_j) \xrightarrow{\pi} Z_j$ is a fiber bundle. It is shown in \Cref{prop: euler char of fiber} that the Euler characteristic of a fiber $\mathcal V$ is $\binom{m_k-r}{e_k-r}$. Therefore we have
\begin{equation}\label{eq: euler char of V}
        \chi(\tilde Z_{\mathbf e; r}(M)) = \sum_{j\in J} \chi(\pi^{-1}(Z_j)) = \sum_{j\in J} \binom{m_k-r}{e_k-r} \chi(Z_j) = \binom{m_k-r}{e_k-r}\chi(Z_{\mathbf e'; r}(M)).
\end{equation}

For $M'$ and $s\in \mathbb N$, we define the subset 
\[
    X_{\mathbf e';s}(M') \subset \prod_{i\neq k} \grlf(e_i, M_i') \coloneqq \prod_{i\neq k} \{(N_i)_{i\neq k} \mid N_i\subset M'_i,\ \text{$N_i$ is free of rank $e_i$}\}
\]
such that
\begin{enumerate}
    \item $(N_i)_{i\neq k}$ is closed under the actions of arrows in (the quiver of) $s_k(H)$ not incident to $k$;
    \item The rank of a maximal free $H_k$-submodule of $(M'_{k,\mathrm{out}})^{-1}(N_{(-,k)})$ is $s$.
\end{enumerate}
Then we have the forgetful map
\[
\rho\colon \tilde X_{\mathbf e;s}(M') \rightarrow X_{\mathbf e';s}(M'),\quad N\mapsto (N_i)_{i\neq k},
\]
whose fiber at $(N_i)_{i\neq k}\in X_{\mathbf e';s}(M')$ is
\[
    \mathcal W = \mathcal W(e_k, (M'_{k,\mathrm{out}})^{-1}(N_{(-,k)})) \coloneqq \{ N_k \mid  N_k\subset (M'_{k,\mathrm{out}})^{-1}(N_{(-,k)})\subset M_k',\ \text{ $N_k$ is free of rank $e_k$} \},
\]
having Euler characteristic, according to \Cref{cor: euler char of fiber source}, $\chi(\mathcal W) = \binom{s}{e_k}$. Analogous to (\ref{eq: euler char of V}), we have
\begin{equation}\label{eq: euler char of W}
    \chi(\tilde X_{\mathbf e; s}(M')) = \binom{s}{e_k} \chi(X_{\mathbf e'; s}(M')).
\end{equation}

\textbf{Step IV}. Now recall that $M$ and $M'$ are reflections of each other, i.e. $M' = F_k^+(M)$, with rank vectors $(m_i)_i$ and $(m_i')_i$ respectively such that
\[
m_i = m_i'\text{ for $i\neq k$} \quad \text{and}\quad m_k + m_k' = \sum_{j\in \Omega(k,-)} b_{kj} m_j.
\]
We claim that for any $\mathbf e' = (e_i)_{i\neq k}$
\begin{equation}\label{eq: equality of two varieties}
X_{\mathbf e';s}(M') = Z_{\mathbf e'; r}(M)\quad \text{for}\quad r + s = \sum_{j\in \Omega(k,-)} b_{kj}e_j.    
\end{equation}
In fact, let $N_i$ be a free submodule (of rank $e_i$) of $M_i$ for any $i\neq k$ and then we have from (\ref{eq: an short exact sequence}) the following short exact sequence of $H_k$-modules
\[
0 \longrightarrow (M'_{k,\mathrm{out}})^{-1}(N_{(-,k)})\longrightarrow N_{(k,-)} \longrightarrow M_{k,\mathrm{in}}(N_{(k,-)}) \longrightarrow 0,
\]
where by our abuse of notation, the $H_k$-module $N_{(-,k)}$ (which is for the orientation $s_k(\Omega)$) is actually the same as $N_{(k,-)}$ for $\Omega$. 
Denote $A = (M'_{k,\mathrm{out}})^{-1}(N_{(-,k)})$, $B = N_{(k, -)}$, and $C = M_{k,\mathrm{in}}(N_{(k,-)})$. The injection $A\rightarrow B$ then factors through the injective hull $E(A)$ of $A$. Thus $C\cong E(A)/A \oplus B/E(A)$ where $B/E(A)$ is free. The number of indecomposable summands of $E(A)/A$ is easily seen to be $\operatorname{rank} E(A) - \operatorname{rank} F(A)$. The number of indecomposable summands of $C$, which equals $\operatorname{rank} E(C)$, is just $\operatorname{rank} B - \operatorname{rank} F(A)$. We now have the equality
\[
\operatorname{rank} E(M_{k,\mathrm{in}}(N_{(k,-)})) + \operatorname{rank} {F((M_{k, \mathrm{out}}')^{-1}(N_{(-,k)}))} = \operatorname{rank} N_{(k,-)} = \sum_{j\in \Omega(k,-)} b_{kj}e_j.
\]
Then one sees from their definitions that $X_{\mathbf e', s}(M')$ and $Z_{\mathbf e';r}(M)$ for any $r$ and $s$ such that $r + s  = \operatorname{ rank } N_{(k,-)}$ define the exact same tuples $(N_i)_{i\neq k}$ in $\prod_{i\neq k} \grlf(e_i, M_i)$, hence (\ref{eq: equality of two varieties}).

Now we rewrite the $F$-polynomials as
\begin{align*}
    F_M(y_1, \dots, y_n) & = \sum_{\mathbf e\in \mathbb N^n} \sum_{r\in \mathbb N} \chi(\tilde Z_{\mathbf e;r}(M)) y^\mathbf e \\
    & \overset{(\ref{eq: euler char of V})}{=} \sum_{\mathbf e\in \mathbb N^n} \sum_{r\in \mathbb N} \binom{m_k-r}{e_k -r}\chi(Z_{\mathbf e';r}(M)) y^\mathbf e\\
    & = \sum_{\mathbf e'\in \mathbb N^{n-1}}\sum_{r\in \mathbb N}\chi(Z_{\mathbf e';r}(M))y_k^r(1+y_k)^{m_k-r}\prod_{i\neq k}y_i^{e_i},\\
    \\
    F_{M'}(z_1, \dots, z_n) & = \sum_{\mathbf e\in \mathbb N^n} \sum_{s\in \mathbb N} \chi(\tilde X_{\mathbf e;s}(M')) z^\mathbf e \\
    & \overset{(\ref{eq: euler char of W})}{=} \sum_{\mathbf e\in \mathbb N^n} \sum_{s\in \mathbb N} \binom{s}{e_k} \chi(X_{\mathbf e';s}(M')) z^\mathbf e\\
    & = \sum_{\mathbf e'\in \mathbb N^{n-1}}\sum_{s\in \mathbb N}\chi(X_{\mathbf e';s}(M'))(1+z_k)^{s}\prod_{i\neq k} z_i^{e_i}.
\end{align*}
Let $z_i = y_i' \coloneqq y_iy_k^{b_{ki}}/(y_k+1)^{b_{ki}}$ for $i\neq k$ and $z_k = y_k' \coloneqq y_k^{-1}$. Finally we have
\begin{align*}
F_{M'}(y_1', \dots, y_n') &= \sum_{\mathbf e'\in \mathbb N^{n-1}}\sum_{s\in \mathbb N} \chi(X_{\mathbf e';s}(M')) \left(\frac{y_k}{1+y_k}\right)^{-s+\sum_{j\neq k}b_{kj}e_j}\prod_{i\neq k}y_i^{e_i}\\
&\overset{(\ref{eq: equality of two varieties})}{=} \sum_{\mathbf e'\in \mathbb N^{n-1}}\sum_{r\in \mathbb N} \chi(Z_{\mathbf e';r}(M')) \left(\frac{y_k}{1+y_k}\right)^{r}\prod_{i\neq k}y_i^{e_i}\\
&=(1+y_k)^{-m_k}F_M(y_1,\dots, y_n).
\end{align*}
\end{proof}

\begin{corollary}\label{cor: recursion of cc function general}
In the setting of the above \Cref{lemma: recursion of f polynomial general},  we have
\[
X_M(x_1,\dots, x_n) = X_{M'}(x_1', \dots, x_n'),
\]
where $x_i'=x_i$ for $i\neq k$ and
\[
    x_k' = x_k^{-1}\left( \prod_{i\in I} x_i^{[b_{ik}]_+} + \prod_{i\in I} x_i^{[-b_{ik}]_+}\right) = x_k^{-1}\left(1 + \prod_{i\in \Omega(k,-)} x_i^{-b_{ik}} \right).
\]
\end{corollary}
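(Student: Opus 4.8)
The plan is to read this corollary off from \Cref{lemma: recursion of f polynomial general} by passing through the $F$-polynomial presentation of the Caldero--Chapoton function recorded in \Cref{rmk: express cc function by f polynomial}. Set $B' = \mu_k(B)$, which equals the exchange matrix $B(C, s_k(\Omega))$ of $s_k(H)$ because $k$ is a sink, and write $\hat y_j = \prod_i x_i^{b_{ij}}$ and $\hat y'_j = \prod_i (x'_i)^{b'_{ij}}$ for the hat-variables before and after the mutation $\mu_k$. By \Cref{rmk: express cc function by f polynomial} one has $X_M = \bigl(\prod_i x_i^{-m_i + \sum_j [-b_{ij}]_+ m_j}\bigr)\, F_M(\hat y_1,\dots,\hat y_n)$ and, likewise, $X_{M'}(x'_1,\dots,x'_n) = \bigl(\prod_i (x'_i)^{-m'_i + \sum_j [-b'_{ij}]_+ m'_j}\bigr)\, F_{M'}(\hat y'_1,\dots,\hat y'_n)$. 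So it suffices to check that the cluster substitution $x_i\mapsto x'_i$ turns the hat-variables into the substituted variables $y'_i$ appearing in \Cref{lemma: recursion of f polynomial general}, and that the two monomial prefactors differ precisely by the factor $(1+\hat y_k)^{-m_k}$ produced by that proposition.

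For the first point I would begin from the observation that $k$ being a \emph{sink} of $\Omega$ forces $b_{ik}\le 0$ and $b_{kj}\ge 0$ for all $i,j$; hence $[b_{kj}]_+ = b_{kj}$, $[-b_{kj}]_+ = 0$, and every correction term $[b_{ik}b_{kj}]_+$ in Fomin--Zelevinsky's matrix mutation vanishes, so that $b'_{ij} = b_{ij}$ for $i,j\neq k$ while $b'_{kj} = -b_{kj}$. Rewriting $x'_k = x_k^{-1}\hat y_k^{-1}(1+\hat y_k)$ --- which is exactly the second expression for $x'_k$ in the statement, since $\prod_{i\in\Omega(k,-)} x_i^{-b_{ik}} = \hat y_k^{-1}$ --- and substituting, a one-line computation gives $\hat y'_k = \hat y_k^{-1}$ and $\hat y'_i = \hat y_i\,\hat y_k^{b_{ki}}(1+\hat y_k)^{-b_{ki}}$ for $i\neq k$. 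This is precisely the substitution $y_i\mapsto y'_i$ of \Cref{lemma: recursion of f polynomial general} evaluated at $y_i = \hat y_i$ (the exponents $b_{ki}$ there being nonnegative because $k$ is a sink), so specializing that proposition at $y_i = \hat y_i$ yields $F_{M'}(\hat y'_1,\dots,\hat y'_n) = (1+\hat y_k)^{-m_k}\, F_M(\hat y_1,\dots,\hat y_n)$.

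For the prefactors I would just carry out the exponent bookkeeping. Using $m'_i = m_i$ for $i\neq k$ and $m'_k = -m_k + \sum_{j\in\Omega(k,-)} b_{kj}m_j$ from \Cref{lemma: recursion of f polynomial general}, together with $[-b'_{kj}]_+ = [b_{kj}]_+ = b_{kj}$, one finds that the $k$-th exponent in the prefactor of $X_{M'}(x'_1,\dots,x'_n)$ equals $-m'_k + \sum_{j}[-b'_{kj}]_+ m'_j = m_k$; combined with $x'_k = x_k^{-1}\hat y_k^{-1}(1+\hat y_k)$ this contributes a factor $(1+\hat y_k)^{m_k}$ cancelling the $(1+\hat y_k)^{-m_k}$ coming from $F_{M'}(\hat y')$, and the residual powers of $x_k$ and of the $x_i$ with $i\neq k$ reassemble --- using $b'_{ij}=b_{ij}$ for $i,j\neq k$ and $[-b_{kj}]_+ = 0$ once more --- into $\prod_i x_i^{-m_i + \sum_j [-b_{ij}]_+ m_j}$. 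Substituting the two displayed identities back then gives $X_M(x_1,\dots,x_n) = X_{M'}(x'_1,\dots,x'_n)$.

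I do not anticipate a real obstacle: the corollary is purely a translation of the already-proved $F$-polynomial recursion into Caldero--Chapoton language. The one point that needs care is to keep exploiting that $k$ is a sink rather than an arbitrary vertex --- that is exactly what makes the truncated-plus operations and the matrix-mutation correction terms degenerate so that the $F$-polynomial substitution rule of \Cref{lemma: recursion of f polynomial general} and the ordinary cluster mutation of the $\hat y$-variables become literally the same substitution.
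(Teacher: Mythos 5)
Your proof is correct and follows essentially the same route as the paper: both pass through the $F$-polynomial form of the Caldero--Chapoton function from \Cref{rmk: express cc function by f polynomial}, specialize the substitution of \Cref{lemma: recursion of f polynomial general} at the $\hat y$-variables to recover the mutated $\hat y'$-variables, and then match the monomial prefactors up to the $(1+\hat y_k)^{\pm m_k}$ factor. The only difference is that you spell out the exponent bookkeeping that the paper dismisses as ``straightforward to check.''
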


\begin{proof}
We first derive from \Cref{lemma: recursion of f polynomial general} that
\[
(1 + \hat y_k)^{-m_k} F_M(\hat y_1, \dots, \hat y_n) = F_{M'}(\hat y_1', \dots, \hat y_n'),
\]
where $\hat y_j \coloneqq \prod_{i=1}^n x_i^{b_{ij}}$ and $\hat y_j' \coloneqq \prod_{i=1}^n (x_i')^{b'_{ij}}$ where $(b'_{ij}) \coloneqq  \mu_k(B)$ or explicitly $b'_{ij} = -b_{ij}$ if $i=k$ or $j=k$ and $b'_{ij} = b_{ij}$ otherwise. In fact, this equality directly follows from the algebraic equations
\[
y_j'\vert_{y_i \leftarrow \hat y_i, i=1,\dots, n} = \hat y_j',\quad j\in I.
\]

Then we spell out the two sides of the desired equation in the form of \Cref{rmk: express cc function by f polynomial}. Now it amounts to show
\[
    \prod_{i=1}^n x_i^{-m_i + \sum_{j=1}^n [-b_{ij}]_+m_j} = (1 + \hat y_k)^{-m_k} \prod_{i=1}^n (x_i')^{-m'_i + \sum_{j=1}^n [-b'_{ij}]_+m'_j},
\]
which is straightforward to check.
\end{proof}

We finish this section by proving the following proposition (and \Cref{cor: euler char of fiber source}) which has been used in the proof of \Cref{lemma: recursion of f polynomial general}.

\begin{proposition}\label{prop: euler char of fiber}
Let $M$ be a (finitely generated) $\mathbb C[\varepsilon]/(\varepsilon^n)$-module whose (any) maximal free submodule $F(M)$ is of rank $m$. Let $L\subset M$ be a submodule such that $E(L)$ the injective hull of $L$ is of rank $\ell$. Assume further that $L$ is contained in a free submodule of $M$. Then for any integer $e$ between $\ell$ and $m$, the variety
\[
\mathcal V = \mathcal V(e, L, M) \coloneqq \{N\mid L\subset N \subset M,\ \text{$N$ is free of rank $e$}\}
\]
has Euler characteristic $\binom{m-\ell}{e-\ell}$.
\end{proposition}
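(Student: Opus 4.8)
The plan is to equip the variety $\mathcal V = \mathcal V(e, L, M)$ with an action of an algebraic torus whose fixed locus is a finite set of cardinality exactly $\binom{m-\ell}{e-\ell}$, and then to invoke the standard fact that $\chi(X) = \chi(X^T)$ whenever a torus $T$ acts algebraically on a complex algebraic variety $X$ (this applies to $\mathcal V$, which is a locally closed subvariety of a product of ordinary Grassmannians). Write $R = \mathbb{C}[\varepsilon]/(\varepsilon^n)$. We use two elementary facts: $R$ is self-injective, so over $R$ ``injective'' and ``free'' coincide, and in particular every free submodule of an $R$-module is a direct summand; and $R$ is a principal ideal ring, so submodules of free $R$-modules admit a Smith normal form.

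First I would fix convenient coordinates. By hypothesis $L$ lies in some free submodule of $M$; that submodule is a direct summand, hence is contained in a maximal free submodule, so we may fix a maximal free submodule $P \subseteq M$ with $L \subseteq P$, necessarily of rank $m$, and write $M = P \oplus M'$. Maximality of $P$ forces $M'$ to contain no nonzero free submodule. Applying Smith normal form to $L \subseteq P$, I would choose a basis $u_1, \dots, u_m$ of $P$ and integers $0 \le d_i \le n$ (with $\varepsilon^n u_i = 0$) such that $L = \bigoplus_{i=1}^m R\,\varepsilon^{d_i} u_i$. One valid choice of injective hull of $L$ inside $M$ is then $\bigoplus_{i \,:\, d_i < n} R u_i \cong R^{\#\{i \,:\, d_i < n\}}$, so the assumption $\operatorname{rank} E(L) = \ell$ means exactly $\ell$ of the $d_i$ are $< n$; relabel so that $d_1, \dots, d_\ell < n$ and $d_{\ell+1} = \dots = d_m = n$.

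Next I would introduce the torus $T = (\mathbb{C}^*)^m$ acting on $M$ by $t \cdot u_i = t_i u_i$ on $P$ and trivially on $M'$. This is an action by $R$-module automorphisms, and it fixes $L$ because each summand $R\,\varepsilon^{d_i} u_i$ is $T$-stable; hence $T$ acts on $\mathcal V$. The $T$-weight spaces of $M$ are the free rank-one submodules $R u_i$, carrying $m$ pairwise distinct coordinate characters, together with $M'$ in weight zero; consequently every $T$-stable submodule $N \subseteq M$ decomposes as $N = \bigoplus_{i=1}^m (N \cap R u_i) \oplus (N \cap M')$, with $N \cap R u_i = R\,\varepsilon^{c_i} u_i$ for some $c_i \in \{0, \dots, n\}$ and $N \cap M' \subseteq M'$. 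If moreover $N$ is free of rank $e$, then each summand above, being a direct summand of a free module over the local ring $R$, is itself free; this forces $c_i \in \{0, n\}$ for every $i$ and $N \cap M' = 0$ (since $M'$ has no nonzero free submodule), so $N = \bigoplus_{i \in A} R u_i$ with $A \subseteq \{1, \dots, m\}$, $|A| = e$. Finally $L \subseteq N$ translates into $\{1, \dots, \ell\} \subseteq A$, because the $u_i$-component of $L$ is nonzero precisely for $i \le \ell$. Thus $\mathcal V^T$ is the finite set of submodules $\bigoplus_{i \in A} R u_i$ with $\{1,\dots,\ell\} \subseteq A$ and $|A| = e$, of which there are $\binom{m-\ell}{e-\ell}$, whence $\chi(\mathcal V) = \chi(\mathcal V^T) = \binom{m-\ell}{e-\ell}$ (the formula being the usual binomial coefficient, which also correctly vanishes outside $\ell \le e \le m$).

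I expect the only delicate points to be the two reductions in the second step — that $L$ may be placed inside a maximal free summand $P$ with a complement having no nonzero free submodule, and that $L$ may be diagonalized against a basis of $P$ — both of which are elementary module theory over the self-injective principal ideal ring $R$; once these are in hand, the torus argument and the fixed-point count are immediate. Finally, the companion fact used in the proof of \Cref{lemma: recursion of f polynomial general}, namely $\chi(\mathcal W) = \binom{s}{e_k}$ for the fiber $\mathcal W = \mathcal W(e_k, (M'_{k,\mathrm{out}})^{-1}(N_{(-,k)}))$, is exactly the special case $L = 0$ (so $\ell = 0$) of the present proposition, applied to the $\mathbb{C}[\varepsilon]/(\varepsilon^{c_k})$-module $U = (M'_{k,\mathrm{out}})^{-1}(N_{(-,k)})$, whose maximal free submodule has rank $s$; I would record this observation as \Cref{cor: euler char of fiber source} immediately afterwards.
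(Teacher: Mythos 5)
Your proof is correct, and it takes a genuinely different route from the paper's. The paper argues by an iterated fibration: it identifies $\mathcal V$ with a variety $\mathcal F_n$ of flags $0=N^{(0)}\subset N^{(1)}\subset\cdots\subset N^{(n)}$ in $M$ (with $N^{(i)}$ of dimension $ie$, $\varepsilon(N^{(i)})=N^{(i-1)}$, and $L_{(i)}\subset N^{(i)}\subset M^{(i)}$), shows via an explicit linear-algebra lemma that each forgetful map $\mathcal F_{k+1}\to\mathcal F_k$ is an affine-space bundle, and reduces to $\mathcal F_1\cong\mathrm{Gr}(e-\ell,m-\ell)$. That argument is longer but yields the stronger conclusion that $\mathcal V$ is homotopy equivalent to a Grassmannian. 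Your argument instead equips $\mathcal V$ with an algebraic torus action and counts the fixed locus, invoking $\chi(X)=\chi(X^T)$; this is shorter and more conceptual but proves only the Euler characteristic. All the reductions you flag as the delicate points do go through: since $R=\mathbb C[\varepsilon]/(\varepsilon^n)$ is self-injective local, a free submodule is an injective direct summand, so the free submodule containing $L$ extends to a maximal free summand $P$ with complement $M'$ that (by Krull--Schmidt) has no nonzero free submodule; Smith normal form for $L\subset P\cong R^m$ follows by lifting $L$ to a rank-$m$ free submodule of $\mathbb C[\varepsilon]^m$ containing $(\varepsilon^n)^m$ and reducing the elementary divisors $\varepsilon^{d_i}$ modulo $\varepsilon^n$; and a $T$-stable $R$-submodule splits along the $m+1$ pairwise distinct weight spaces $Ru_1,\dots,Ru_m,M'$, after which freeness of $N$ (finitely generated projective over the local ring $R$) forces each piece $N\cap Ru_i\in\{0,Ru_i\}$ and $N\cap M'=0$. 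Your reading of \Cref{cor: euler char of fiber source} as the $L=0$ specialization also matches the paper.
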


\begin{proof}

For any $\mathbb C[\varepsilon]/(\varepsilon^n)$-module $M$, let for $0\leq k \leq n$,
\[
    M^{(k)} \coloneqq \varepsilon^{n-k}(M)\quad \text{and} \quad M_{(k)} \coloneqq \ker (\varepsilon^k\colon M\rightarrow M).
\]
It is clear that $M^{(k)}$ is contained in $M_{(k)}$. These vector spaces fit into short exact sequences
\[
0\longrightarrow M_{(k)} \longrightarrow M \xlongrightarrow{\varepsilon^k} M^{(n-k)} \longrightarrow 0
\]
and filtrations
\[
    0 = M^{(0)}\subset M^{(1)} \subset \cdots \subset M^{(n)} = M,\quad 0 = M_{(0)}\subset M_{(1)} \subset \cdots \subset M_{(n)} = M.
\]
For example, if $\operatorname{rank} F(M) = m$, then $M^{(1)} \cong \mathbb C^m$; if $\operatorname{rank} E(M) = \ell$, then $M_{(1)} \cong \mathbb C^\ell$.

Consider for $1\leq k\leq n$, the variety $\mathcal F_k$ of flags in $M$
\[
0 = N^{(0)}\subset N^{(1)} \subset \cdots \subset N^{(k)} \subset M
\]
such that for $0\leq i\leq k$, each $N^{(i)}$ is a $\mathbb C$-vector subspace of dimension $ie$ satisfying the following conditions:
\begin{enumerate}
    \item $L_{(i)} \subset N^{(i)} \subset M^{(i)}$;
    \item $\varepsilon (N^{(i)}) = N^{(i-1)}$.
\end{enumerate}
A point in $\mathcal F_n$ clearly determines a submodule $N^{(n)}$ of $M$ containing $L$, which is free of rank $e$ simply for dimension reasons. A free basis can be obtained by choosing a (vector space) basis of $N^{(1)}$ and taking a lift in $N^{(n)}$ through $\varepsilon^{n-1}$. Sending $N\in \mathcal V$ to the filtration given by $(N^{(i)})_{i=1}^n$ induces an isomorphism from $\mathcal V$ to $\mathcal F_n$. There are maps 
$\pi_{k+1, k}\colon \mathcal F_{k+1} \rightarrow \mathcal F_{k}$ forgetting the largest subspace $N^{(k+1)}$ in a flag. We next show that
\begin{enumerate}
    \item $\mathcal F_1$ is isomorphic to the Grassmannian $\mathrm{Gr}(e-\ell, m-\ell)$;
    \item each $\pi_{k+1,k}$ is a fiber bundle with fiber being an affine space.
\end{enumerate}

The assumption that $E(L)$ is of rank $\ell$ implies $L_{(1)} \cong \mathbb C^{\ell}$, and that $F(M)$ is of rank $m$ implies $M^{(1)} \cong \mathbb C^m$. Then $\mathcal F_1$ is the space $\{N^{(1)}\mid L_{(1)}\subset N^{(1)} \subset M^{(1)},\ \dim N^{(1)} = e\}$, clearly isomorphic to $\mathrm{Gr}(e-\ell, m-\ell)$.

Let $(N^{(i)})_{i=1}^k$ be a point in $\mathcal F_k$. We collect several auxiliary vector subspaces of $M^{(k+1)}$) to be used later.
\begin{itemize}
    \item 
    Let $P^{(k)}$ be the preimage of $N^{(k)}$ in $M^{(k+1)}$ of the (surjective) map $\varepsilon \colon M^{(k+1)} \rightarrow M^{(k)}$. It fits in the short exact sequence
    \[
        0 \longrightarrow K^{(k+1)} \longrightarrow P^{(k)} \xlongrightarrow{\varepsilon} N^{(k)} \longrightarrow 0
    \]
    where $K^{(k+1)} \coloneqq M^{(k+1)} \cap M_{(1)}$ which is the kernel of $\varepsilon \colon M^{(k+1)} \rightarrow M^{(k)}$. Let $m_{k+1} \coloneqq \dim K^{(k+1)}$. We have $\dim P^{(k)} = ek + m_{k+1}.$ The subspace $P^{(k)}$ contains $N^{(k)}$ because $\varepsilon(N^{(k)}) = N^{(k-1)} \subset N^{(k)}$. Since $N^{(1)}\subset M^{(1)} = M^{(1)}\cap M_{(1)} \subset K^{(k+1)}$, we have $e\leq m \leq m_{k+1}$. In fact, the sequence $(m_{k+1})_{k=0}^{n-1}$ is increasing because of the filtration $(M^{(k+1)})_{k=0}^{n-1}$.
    
    \item 
    We have that $L_{(k+1)}$ is contained in $P^{(k)}$ since $\varepsilon(L_{(k+1)})$ is in $L_{(k)}\subset N^{(k)}$. Denote $\ell_{k+1} = \dim L_{(k+1)}/L_{(k)}$. As $\dim L_{(k)}$ is complementary to $\dim L^{(n-k)}$ in $L$, the sequence $(\ell_{k+1})_{k=0}^{n-1}$ is decreasing. Thus $\ell_{k+1}\leq \ell_1 = \ell\leq e$.
    
    \item 
    The two subspaces $L_{(k+1)}$ and $N^{(k)}$ intersect to give exactly $L_{(k)}$. Their span $W^{(k+1)}\coloneqq L_{(k+1)} + N^{(k)}$ thus has dimension $ke+\ell_{k+1}$. Notice $W^{(k+1)}\cap K^{(k+1)} = N^{(1)}$. In fact, for any $a\in N^{(k)}$ and $b\in L_{(k+1)}$ such that $\varepsilon(a+b) = 0$, we have $\varepsilon(a) = \varepsilon(-b)$ in $N^{(k-1)}\cap L_{(k)} = L_{(k-1)}$. Then $b$ belongs to $L_{(k)} \subset N^{(k)}$ and $a+b\in N^{(k)}$. Thus $a+b \in \ker(\varepsilon\colon N^{(k)} \rightarrow N^{(k-1)}) = N^{(1)}$.
\end{itemize}
Now the fiber of $\pi_{k+1, k}$ at $(N^{(i)})_{i=1}^k$ is
\[
    X_{k+1}\coloneqq \left\{N^{(k+1)}\mid W^{(k+1)}\subset N^{(k+1)} \subset P^{(k)},\ \dim N^{(k+1)} = (k+1)e,\ N^{(k+1)}\cap K^{(k+1)} = N^{(1)} \right\}
\]
We claim that $X_{k+1}$ is isomorphic to the affine space $\mathbb A^{(e-\ell_{k+1})\cdot (m_{k+1}-e)}$, which follows from
\begin{lemma}
Let $0\leq e\leq a\leq b\leq c$ and $e\leq d$ be non-negative integers such that $b + d - e = c$. Let $E\subset A\subset C$ be vector spaces of dimensions $e$, $a$, and $c$ respectively. Let $D\subset C$ be a subspace of dimension $d$ such that $D \cap A = E$. Then the space
\[
X \coloneqq \{B\mid A\subset B\subset C,\ \dim B = b,\ B\cap D = E\}
\]
is isomorphic to the affine space $\mathbb A^{(b-a)\cdot (d-e)}$.
\end{lemma}
\begin{proof}
Quotient by $E$ the common subspace for all, the space $X$ is identified with
\[
X'\coloneqq \{B' \mid A/E \subset B' \subset C/E,\ \dim B' = b-e,\ B'\cap D/E = 0 \}.
\]
Now $D/E \cap A/E = 0$ in $C/E$. Consider the quotient map $\pi \colon C/E \rightarrow (C/E)/(A/E)$, which induces an isomorphism from $X'$ to
\[
X''\coloneqq \{B'' \mid B''\subset (C/E)/(A/E),\ \dim B'' = b-a,\ B''\cap (D/E) = 0\},
\]
where $D/E$ denotes the image of $D/E$ under $\pi$. Now we have $\dim {(C/E)/(A/E)} = c-a$, $\dim {(D/E)} = d-e$, and thus
\[
\dim {(C/E)/(A/E)} = \dim {(D/E)} + \dim B''.
\]
It is then standard that the space $X''$ (of vector subspaces of a given dimension transversal to a fixed vector subspace of the complimentary dimension) is isomorphic to $\mathbb A^{(b-a)\cdot (d-e)}$.
\end{proof}

Now in the context of the above lemma, let 
\[
\text{$A = W^{(k+1)}$, $B = N^{(k+1)}$, $C = P^{(k)}$, $D = K^{(k+1)}$, $E = N^{(1)}$, and}
\]
\[
a = ke + \ell_{k+1},\quad b = (k+1)e,\quad c = ke + m_{k+1},\quad d = m_{k+1},\quad e=e.
\]
We see that $X_{k+1} \cong \mathbb A^{(e-\ell_{k+1})\cdot (m_{k+1}-e)}$. Now that the fiber of each $\pi_{k+1,k}$ is an affine space, the variety $\mathcal V \cong \mathcal F_n$ is then homotopic to the base $\mathcal F_1 \cong \mathrm{Gr}(e-\ell, m-\ell)$.
The desired result on $\chi(\mathcal V)$ follows because $\chi(\mathrm{Gr}(e-\ell, m-\ell)) = \binom{m-\ell}{e-\ell}$.

\end{proof}

\begin{corollary}\label{cor: euler char of fiber source}
Let $M$ be a $\mathbb C[\varepsilon]/\varepsilon^n$-module whose (any) maximal free submodule is of rank $m$. Then for any $0\leq e\leq m$, the variety
\[
\mathcal W(e, M) \coloneqq \{N\mid N\subset M,\ \text{$N$ is free of rank $e$} \}
\]
has Euler characteristic $\binom{m}{e}$.
\end{corollary}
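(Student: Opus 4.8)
The plan is to obtain this statement as the special case $L = 0$ of \Cref{prop: euler char of fiber}. First I would observe that for the zero submodule $L = 0$ the injective hull is $E(0) = 0$, so in the notation of that proposition the rank $\ell$ of $E(L)$ equals $0$; moreover $0$ is trivially contained in a free submodule of $M$, so the running hypotheses are satisfied. The inequality $\ell \le e \le m$ appearing there then becomes exactly $0 \le e \le m$, matching the range in the corollary, and the variety $\mathcal{V}(e, 0, M) = \{N \mid 0 \subset N \subset M,\ N \text{ free of rank } e\}$ is literally equal to $\mathcal{W}(e, M)$.

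Applying \Cref{prop: euler char of fiber} then gives
\[
\chi(\mathcal{W}(e, M)) = \chi(\mathcal{V}(e, 0, M)) = \binom{m - \ell}{e - \ell} = \binom{m}{e},
\]
which is the assertion. I do not anticipate any genuine obstacle; the only point deserving a sentence of care is to confirm that the degenerate choice $L = 0$ really does meet the assumptions of the proposition (it does, vacuously), so that no edge case is being swept under the rug.

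As an alternative that avoids invoking the proposition, one could simply rerun its flag argument with no constraint coming from $L$: the base $\mathcal{F}_1$ becomes the full Grassmannian $\mathrm{Gr}(e, m)$ of $e$-dimensional subspaces of $M^{(1)} \cong \mathbb{C}^m$, and each forgetful map $\pi_{k+1,k}$ is again a fiber bundle with affine-space fibers, so $\mathcal{W}(e, M)$ is homotopy equivalent to $\mathrm{Gr}(e, m)$ and hence has Euler characteristic $\binom{m}{e}$. I would nonetheless present the one-line deduction from \Cref{prop: euler char of fiber}, since it is cleaner and reuses work already done.
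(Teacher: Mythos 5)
Your proposal is exactly the paper's own proof: the paper deduces the corollary by setting $L = 0$ in \Cref{prop: euler char of fiber}, and your elaborations ($E(0)=0$, so $\ell=0$; the containment hypothesis is vacuous; $\mathcal V(e,0,M)=\mathcal W(e,M)$) are all correct and just make the one-line reduction explicit. The alternative flag argument you sketch is also fine but unnecessary given the proposition.
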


\begin{proof}
In the setting of \Cref{prop: euler char of fiber}, letting $L = 0$, the result follows.
\end{proof}

\section{The rank 2 case}\label{section: rank 2}

The purpose of this section is to prove \Cref{thm: intro main theorem rank 2} (\Cref{thm: main theorem rank 2}). In fact, the construction of the algebra $H$ in \Cref{subsection: rank 2} can be seen as within the general framework introduced in \Cref{section: GLS algebras}, which we explain in below.

Let $I = \{1, 2\}$. Let $C$ be the Cartan matrix
\[
     \begin{pmatrix}
    2 & -b\\
    -c & 2
\end{pmatrix}\quad \text{for $b,c\in \mathbb Z_{\geq 0}$}.
\]
There are only two possible orientations, i.e. $\Omega = \{(1,2)\}$ or $\{(2,1)\}$. We take $\Omega = \{(2,1)\}$. The associated matrix $B$ is $\begin{psmallmatrix}
 0 & -b \\
 c & 0
\end{psmallmatrix}.$ Assume in the rest of this section that $bc\geq 4$. The rest cases are of Dynkin types which will be covered in the next \Cref{section: dynkin case}.

Let $D = \mathrm{diag}(c_1, c_2)$ be a symmetrizer of $C$. One easily sees that $H\coloneqq H(C, D,\Omega)$ is the same as $H(b,c,c_1,c_2)$ defined in \Cref{subsection: rank 2}, where we denote the arrow $\alpha_k$ there by $\alpha^{(k)}_{21}$. It is clear that
\[
s_1(\Omega) = s_2(\Omega) = \Omega^* = \{(1,2)\}.
\]
Denote $H^* \coloneqq H(C, D, \Omega^*)$. Then there are reflection functors
\[
F^\pm\colon \rep H \rightarrow \rep H^*,\quad F^\pm \colon \rep H^* \rightarrow \rep H.
\]
We omit the subscripts in the reflection functors since the sign $\pm$ already specifies which vertex the reflection is performed at. Next we define a class of modules obtained from iterative reflections.

\begin{definition}
We define for $n\geq 0$ the following $H$-modules
\[
M(n+3) \coloneqq \begin{cases} (F^+)^n E_1 \quad &\text{if $n$ is even,}\\
(F^+)^n E_2 \quad &\text{if $n$ is odd}
\end{cases}\quad\text{and}\quad
M(-n) \coloneqq \begin{cases} (F^-)^n E_2 \quad &\text{if $n$ is even,}\\
(F^-)^n E_1 \quad &\text{if $n$ is odd.}
\end{cases}
\]
\end{definition}

\begin{remark}\label{rmk: construction of M(n)}
Let us clarify the above construction of $M(n+3)$. For any $n\geq 0$ and $0\leq k\leq n$, let 
\[
H^{(k)} \coloneqq \begin{cases}
H &\text{if $k$ is even,}\\
H^* &\text{if $k$ is odd.}
\end{cases}
\]
Now we have a sequence of functors $F^{(k)}\coloneqq F^+\colon \rep H^{(k+1)}\rightarrow \rep H^{(k)}$ for $0\leq k\leq n-1$. Then $M(n+3)$ is obtained by iteratively applying $F^{(k)}$, i.e.
\[
M(n+3) \coloneqq \begin{cases} F^{(0)}\circ F^{(1)} \circ \cdots \circ F^{(n-1)} (E_1) \quad &\text{if $n$ is even,}\\
F^{(0)}\circ F^{(1)} \circ \cdots \circ F^{(n-1)} (E_2) \quad &\text{if $n$ is odd}.
\end{cases}
\]
The modules $M(-n)$ are defined using $F^-$ in a similar way.
\end{remark}

\begin{lemma}\label{lemma: incoming map is surjective rank 2}
For any $n\geq 0$, the $H$-module $M = M(n+3)$ (resp. $M(-n)$) is locally free, indecomposable and rigid. The map $M_{2,\mathrm{in}}$ (resp. $M_{1,\mathrm{out}}$) is surjective (resp. injective).
\end{lemma}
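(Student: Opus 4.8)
The plan is to run an induction along the chain of sink reflections that, by \Cref{rmk: construction of M(n)}, expresses $M(n+3)$ as an iterated application of $F^+$ to $E_1$ or $E_2$, carrying the four assertions (local freeness, indecomposability, rigidity, and surjectivity of the incoming map at the sink) forward one reflection at a time. The statements for $M(-n)$ are obtained by dualizing verbatim --- replacing $F^+$ by $F^-$ and invoking the ``source'' halves of \Cref{lemma: locally free and rigid implies surjective}, \Cref{prop: reflection preserve locally free and rigid} and \Cref{rmk: reflection preserve indecomposable} --- so I treat only $M(n+3)$.

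For the base case, $M(3)=E_1$ is locally free by definition, indecomposable because its endomorphism algebra is the local ring $H_1$, and rigid (a standard fact about the modules $E_i$, see \cite{geiss2017quivers}); moreover $(E_1)_2=0$, so $(E_1)_{2,\mathrm{in}}$ is vacuously surjective. For the inductive step, let $N$ be a module occurring in the reflection chain that is already known to be locally free, indecomposable and rigid, and let $k$ be the sink at which the next reflection $F_k^+$ is applied. The key point is that $N\not\cong E_k$ --- its verification is postponed --- so that \Cref{lemma: locally free and rigid implies surjective} applies and $N_{k,\mathrm{in}}$ is surjective; then \Cref{prop: reflection preserve locally free and rigid} gives that $F_k^+(N)$ is again locally free and rigid with $\rk F_k^+(N)=s_k(\rk N)$, and \Cref{rmk: reflection preserve indecomposable} gives that $F_k^+(N)$ is indecomposable. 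Iterating down to the terminal module in $\rep H$, which is $M(n+3)$, and applying \Cref{lemma: locally free and rigid implies surjective} once more at the sink $k=2$ of $\Omega$ (using $\rk M(n+3)\neq\alpha_2$) yields the remaining assertion that $M(n+3)_{2,\mathrm{in}}$ is surjective.

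It remains to check $N\not\cong E_k$ at each stage, for which it is enough that $\rk N\notin\{\alpha_1,\alpha_2\}$. The modules $E_1,E_2$ that start the chain do have ranks $\alpha_1,\alpha_2$, but the first reflection is performed at a vertex $k$ outside the support, so $\rk F_k^+(E_i)=s_k(\alpha_i)=\alpha_i-c_{ki}\alpha_k$ has both coordinates at least $1$ (here $c_{ki}\le -1$, since $bc\ge 4$ forces $b,c\ge 1$), hence is neither $\alpha_1$ nor $\alpha_2$; the subsequent rank vectors are obtained by alternately applying $s_1$ and $s_2$, and a routine computation with these two reflections --- this is precisely where the hypothesis $bc\ge 4$ enters --- shows that their coordinates stay $\ge 1$ (indeed grow without bound), so they can never return to $\alpha_1$ or $\alpha_2$. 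In particular this supplies the inequality $\rk M(n+3)\neq\alpha_2$ used above.

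The only genuine subtlety I anticipate is the bookkeeping of this joint induction: surjectivity of the relevant structure map must be in hand exactly when one invokes \Cref{prop: reflection preserve locally free and rigid} (for the rank formula) and \Cref{rmk: reflection preserve indecomposable}, and this is precisely what the check $\rk N\neq\alpha_k$ provides --- a check that itself relies on the rank formula $\rk F_k^+(N)=s_k(\rk N)$ established inductively. Apart from this interlock, every step is a direct appeal to the results recalled in \Cref{section: reflection functors}.
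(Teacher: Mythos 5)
Your proposal is correct and follows essentially the same route as the paper: both proofs proceed by induction, using \Cref{lemma: locally free and rigid implies surjective} to get surjectivity of the incoming map at each sink, then \Cref{prop: reflection preserve locally free and rigid} and \Cref{rmk: reflection preserve indecomposable} to propagate local freeness, rigidity, and indecomposability, with the crux being that the successive rank vectors never return to $\alpha_1$ or $\alpha_2$. The only cosmetic difference is that you phrase this last point as a direct coordinate-growth computation under the hypothesis $bc\geq 4$, whereas the paper invokes the classical fact (citing Sherman--Zelevinsky) that $s_1s_2s_1\cdots(\alpha_i)$ and $s_2s_1s_2\cdots(\alpha_i)$ are real positive roots distinct from the simple roots when $bc\geq 4$; these are the same observation.
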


\begin{proof}
It follows from \Cref{prop: reflection preserve locally free and rigid} that any $M(n+3)$ or $M(-n)$ is locally free and rigid because so is $E_1$ or $E_2$.

Now assume that for any $0\leq k \leq n$ the modules $M(k+3)$ and $M(-k)$ are all indecomposable and that the map $M_{1,\mathrm{out}}$ is surjective for $M(k+3)$ and $M_{2,\mathrm{in}}$ is injective for any $M(-k)$. Denote the rank vectors by $\alpha(n) \coloneqq \rk M(n)$. Now by the construction of $M(n+4)$ and $M(-(n+1))$ and \Cref{prop: reflection preserve locally free and rigid}, we have that these two modules are locally free and rigid, and 
\[
\alpha(n+4) = s_1s_2s_1\cdots (\alpha_{\langle n + 4 \rangle}),\quad \alpha(-(n+1)) = s_2s_1s_2\cdots (\alpha_{\langle -(n+1)\rangle}),
\]
where $\langle n \rangle \in \{1, 2\}$ is congruent to $n$ modulo $2$. It is then known that (in the case $bc\geq 4$) both $\alpha(n+4)$ and $\alpha(-(n+1))$ are real positive roots of $C$ (other than the simple roots $\alpha_1$ and $\alpha_2$) and in particular are strictly positive linear combinations of $\alpha_1$ and $\alpha_2$; see for example \cite[Section 3.1]{sherman2004positivity}. By \Cref{rmk: reflection preserve indecomposable}, $M(n+4)$ and $M(-(n+1))$ are also indecomposable. So they cannot have any summand isomorphic to $E_1$ or $E_2$. Now by \Cref{lemma: locally free and rigid implies surjective}, the induction is completed.
\end{proof}

\begin{remark}\label{rmk: lf ind rigid modules rank 2}
In fact, by \cite{geiss2018rigid}, locally free indecomposable rigid $H$-modules are parametrized by their rank vectors as real Schur roots of $C$ (depending on $\Omega$). Since the rank vectors $\alpha(n) = \rk M(n)$ for $n\leq 0$ and $n\geq 3$ are exactly the real Schur roots (see for example \cite{sherman2004positivity}), we know that $\{M(n)\mid \text{$n\leq 0$ or $n\geq 3$}\}$ fully lists locally free indecomposable rigid $H$-modules.
\end{remark}

\begin{example}\label{ex: reflection b=2 c=3}
Recall the algebra $H$ considered in \Cref{ex: b=2 c=3} where $b=2$ and $c=3$. We calculate $N \coloneqq F_2^+(E_1)\in \rep s_2(H)$ as follows. First, we have
\[
N_1 = (E_1)_1 = H_1\quad \text{and} \quad N_2 = {_2H_1}\otimes H_1 = (H_2 \cdot \alpha_{21})\oplus (H_2 \cdot \alpha_{21} \varepsilon_1) \oplus (H_2\cdot \alpha_{21} \varepsilon_1^2).
\]
The structure map $N_{12}\colon {_1H_2} \otimes N_2\rightarrow N_1$ is given by
\[
\alpha_{12}\otimes \alpha_{21}h \mapsto 0\quad \text{and} \quad \alpha_{12}\otimes \varepsilon_2 \alpha_{21}h \mapsto h
\]
for any $h\in H_1$. Then one sees $N \cong I_1 \in \rep s_2(H)$, which is locally free, indecomposable and rigid, and $\rk N = (1,3) = s_2(\rk E_1)$.

We next calculate $M = M(5) \coloneqq F_1^+F_2^+(E_1) = F_1^+(N)$. By definition, $M_2 = N_2$ and
\[
M_1  =  \ker ({_1H_2\otimes N_2} \xrightarrow{N_{12}} N_1),
\]
which is a free $H_1$-module of rank 5 having the basis
\begin{gather*}
e_1\coloneqq \alpha_{12}\otimes \alpha_{21},\quad e_2\coloneqq \alpha_{12}\otimes \alpha_{21}\varepsilon_1,\quad e_3\coloneqq \alpha_{12} \otimes \alpha_{21}\varepsilon_1^2,\\
e_4\coloneqq \alpha_{12} \otimes \varepsilon_2\alpha_{21}\varepsilon_1 - \varepsilon_1\alpha_{12} \otimes \varepsilon_2\alpha_{21},\quad e_5 \coloneqq \alpha_{12} \otimes \varepsilon_2\alpha_{21}\varepsilon_1^2 - \varepsilon_1\alpha_{12}\otimes \varepsilon_2 \alpha_{21}\varepsilon_1.
\end{gather*}
Thus $M_1 \cong \mathbb C^{15}$ as a vector space with the basis $\{\varepsilon_1^ke_j\mid 1\leq j \leq 5,\ 0\leq k\leq 2\}$. The action of $\alpha_{21}$ on this basis is calculated in table below (only non-zero terms shown). 

\begin{center}
\begin{tabular}{ c|c|c|c|c|c|c|c } 
     $ \varepsilon_1^ke_j$ & $ \varepsilon_1^2e_1 $ & $ \varepsilon_1^2 e_2$ & $ \varepsilon_1^2 e_3$ & $ \varepsilon_1e_4$ & $ \varepsilon_1^2e_4$ & $\varepsilon_1e_5$ & $\varepsilon_1^2 e_5$\\ 
     \hline
    $M_{\alpha_{21}}(\cdot)$ & $\alpha_{21}$ & $\alpha_{21}\varepsilon_1$ & $\alpha_{21}\varepsilon_1^2$ & $-\varepsilon_2\alpha_{21}$ & $\varepsilon_2\alpha_{21}\varepsilon_1$ & $-\varepsilon_2\alpha_{21}\varepsilon_1$ & $\varepsilon_2\alpha_{21}\varepsilon_1^2$\\ 
\end{tabular}
\end{center}
For example, we have by (\ref{eq: define sink reflection}) that
\[
M_{\alpha_{21}}(\varepsilon_1e_5) = \langle \alpha_{21}, \rho(\varepsilon_1\alpha_{12})\rangle\otimes \varepsilon_2\alpha_{21}\varepsilon_1^2-\langle \alpha_{21}, \rho(\varepsilon_1^2\alpha_{12})\rangle\otimes \varepsilon_2\alpha_{21}\varepsilon_1 = -\varepsilon_2\alpha_{21}\varepsilon_1.
\]

\end{example}

Recall the definition of the cluster variables $x_n$ of rank 2 cluster algebras given in \Cref{subsection: rank 2}. We rewrite \Cref{cor: recursion of cc function general} in the current rank 2 situation.

\begin{corollary}\label{cor: recursion of cc function}
Let $M\in \replf H$ such that $M_{2,\mathrm{in}}$ is surjective. Then $M' \coloneqq F_2^+(M)\in \rep H^*$ is locally free and $M'_{2, \mathrm{out}}$ is injective such that $\rk M' = s_2(\rk M)$. We further have
\[
X_M(x_1, x_2) = X_{M'}(x_1', x_2')
\]
where $x_1' = x_1$ and $x_2' = x_2^{-1}(1 + x_1^b)$.
\end{corollary}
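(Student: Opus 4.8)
The plan is to obtain this statement as the rank-$2$ transcription of \Cref{cor: recursion of cc function general}, applied with $k = 2$. First I would record that $2$ is a sink of $\Omega = \{(2,1)\}$, equivalently $\Omega(-,2) = \varnothing$, since the arrows of $Q^\circ$ all point from $1$ to $2$. Because $s_2(\Omega) = \Omega^* = \{(1,2)\}$ we have $s_2(H) = H^*$, which matches the codomain in the statement; thus the hypothesis that $M_{2,\mathrm{in}}$ is surjective is precisely the hypothesis of \Cref{lemma: recursion of f polynomial general} and \Cref{cor: recursion of cc function general} in direction $k = 2$.

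\Cref{lemma: recursion of f polynomial general} then gives at once that $M' \coloneqq F_2^+(M)$ is locally free over $H^*$ with rank vector $s_2(\rk M)$: writing $\rk M = (m_1, m_2)$ and using $s_2(\alpha_1) = \alpha_1 - c_{21}\alpha_2 = \alpha_1 + c\alpha_2$ together with $s_2(\alpha_2) = -\alpha_2$, this reads $\rk M' = (m_1,\, c m_1 - m_2)$, in agreement with the formula $m_2' = -m_2 + b_{21}m_1$ of \Cref{lemma: recursion of f polynomial general}. For the injectivity of $M'_{2,\mathrm{out}}$ I would invoke the short exact sequence (\ref{eq: an short exact sequence}) appearing in the proof of \Cref{lemma: recursion of f polynomial general}, in which $M'_{2,\mathrm{out}}$ is identified with the inclusion $\ker M_{2,\mathrm{in}} \hookrightarrow \bigoplus_{j\in\Omega(2,-)} {}_2H_j \otimes M_j$ and is therefore a monomorphism; equivalently, $F_2^+(M)$ lies in $\mathcal S^{s_2(H)}_2$, so \Cref{prop: reflections are adjoint} and \Cref{lemma: equivalent conditions} apply.

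It remains to spell out the change of variables. \Cref{cor: recursion of cc function general} with $k = 2$ yields $X_M(x_1,x_2) = X_{M'}(x_1', x_2')$ with $x_1' = x_1$ and $x_2' = x_2^{-1}\bigl(\prod_{i\in I} x_i^{[b_{i2}]_+} + \prod_{i\in I} x_i^{[-b_{i2}]_+}\bigr)$. Since $B = \begin{psmallmatrix} 0 & -b\\ c & 0 \end{psmallmatrix}$, we have $b_{12} = -b \le 0$ and $b_{22} = 0$, hence $[b_{12}]_+ = [b_{22}]_+ = 0$ while $[-b_{12}]_+ = b$ and $[-b_{22}]_+ = 0$; therefore $\prod_{i\in I} x_i^{[b_{i2}]_+} = 1$ and $\prod_{i\in I} x_i^{[-b_{i2}]_+} = x_1^b$, so that $x_2' = x_2^{-1}(1 + x_1^b)$, which is also the second form $x_2^{-1}\bigl(1 + \prod_{i\in\Omega(2,-)} x_i^{-b_{i2}}\bigr)$ of \Cref{cor: recursion of cc function general}. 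I do not anticipate any real obstacle: the entire content of the corollary is carried by \Cref{lemma: recursion of f polynomial general} and \Cref{cor: recursion of cc function general}, and the only point needing care is that the sink/source conventions and the signs in $s_2$, $b_{i2}$ and $[\,\cdot\,]_+$ are consistently matched.
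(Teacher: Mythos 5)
Your proof is correct and takes the same route the paper intends: the corollary is exactly the rank-$2$ specialization of \Cref{cor: recursion of cc function general} (together with the rank-vector and locally-free claims from \Cref{lemma: recursion of f polynomial general}), and you have carried out the translation of indices, signs, and conventions accurately. The injectivity of $M'_{2,\mathrm{out}}$ via the short exact sequence (\ref{eq: an short exact sequence}) (or equivalently via $\mathcal S_2^{s_2(H)}$) is precisely the right observation, and the computation $x_2' = x_2^{-1}(1+x_1^b)$ from $b_{12} = -b$, $b_{22} = 0$ checks out.
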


The following is the main result of this section.

\begin{theorem}\label{thm: main theorem rank 2}
The map
\[
M\mapsto X_M\in \mathbb Z[x_1^\pm, x_2^\pm]
\]
induces a bijection
\[
\{ M(n) \mid n\leq 0,\ n\geq 3 \} \longleftrightarrow \{x_n \mid n\leq 0,\ n\geq 3\}
\]
such that $X_{M(n)} = x_n$. In particular, each $x_n$ is distinct for $n\in \mathbb Z$.
\end{theorem}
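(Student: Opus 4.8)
The plan is to prove the theorem by induction on $|n|$, using the key recursion \Cref{cor: recursion of cc function} together with the explicit base cases. First I would establish the base of the induction: by \Cref{ex: cc function of E_k} we have $X_{M(3)} = X_{E_1} = x_1^{-1}(1 + x_2^c) = x_3$ and, reflecting, one computes $X_{M(0)} = X_{E_2}$, which matches $x_0$ by the rank 2 exchange relation read backwards (indeed $x_0 x_2 = 1 + x_1^b$, so $x_0 = x_2^{-1}(1+x_1^b)$). One must be slightly careful here about which of the two vertices carries the reflection at each step and about the identification $s_1(\Omega) = s_2(\Omega) = \Omega^*$, but this is purely bookkeeping.

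Next I would set up the inductive step for $n \geq 3$ (the argument for $n \leq 0$ is symmetric, replacing $F^+$ by $F^-$, sources by sinks, and $x_1' = x_1$, $x_2' = x_2^{-1}(1+x_1^b)$ by the mirror substitution). Suppose $X_{M(m)} = x_m$ for all relevant $m$ with $3 \le m < n$. By construction $M(n) = F^+ M(n-1)$, where the reflection is at vertex $2$ if $n$ is odd and at vertex $1$ if $n$ is even; in either case \Cref{lemma: incoming map is surjective rank 2} guarantees the relevant incoming map of $M(n-1)$ is surjective, so \Cref{cor: recursion of cc function} (or its vertex-$1$ mirror) applies and gives $X_{M(n-1)}(x_1,x_2) = X_{M(n)}(x_1', x_2')$ with the explicit substitution. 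Inverting this substitution — which is exactly the substitution realizing a cluster mutation at a sink/source, as noted after \Cref{prop: introduction cc function under reflection general} — turns $X_{M(n-1)} = x_{n-1}$ into $X_{M(n)} = x_n$, because the cluster-variable recursion $x_{n-1}x_{n+1} = 1 + x_n^{b \text{ or } c}$ is precisely this mutation. Concretely, one substitutes $x_{n-1} \mapsto x_n$ and the unchanged variable stays, and matches the resulting Laurent polynomial identity against the defining relation for $x_n$.

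The remaining point is that the map $M(n) \mapsto X_{M(n)}$ is a \emph{bijection} onto $\{x_n \mid n \le 0 \text{ or } n \ge 3\}$, and that all the $x_n$ are pairwise distinct. Injectivity on the module side follows from \Cref{rmk: lf ind rigid modules rank 2}: the modules $M(n)$ are pairwise non-isomorphic since their rank vectors $\alpha(n)$ are pairwise distinct real Schur roots (this uses $bc \ge 4$, so the roots do not become periodic; cf. \cite{sherman2004positivity}). To deduce that the $x_n$ are distinct, I would invoke \Cref{rmk: express cc function by f polynomial}: the $\mathbf d$-vector (minimal denominator) of $X_{M(n)}$ equals $\rk M(n) = \alpha(n)$, and since these rank vectors are pairwise distinct, the Laurent polynomials $x_n = X_{M(n)}$ must be pairwise distinct as well. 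Surjectivity is immediate since every $x_n$ with $n \le 0$ or $n \ge 3$ is hit by $M(n)$. That \Cref{rmk: lf ind rigid modules rank 2} also tells us $\{M(n)\}$ exhausts the locally free indecomposable rigid $H$-modules gives the final sentence of \Cref{thm: intro main theorem rank 2}.

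The main obstacle I anticipate is not conceptual but a careful verification that the substitution in \Cref{cor: recursion of cc function} inverts correctly at each parity and that it genuinely coincides with the cluster mutation in direction $2$ (resp. $1$) for the matrix $B = \begin{psmallmatrix} 0 & -b \\ c & 0 \end{psmallmatrix}$ — that is, checking that $\mu_k(B) = B(C, s_k(\Omega))$ in rank $2$ and that the exchange monomials match the $[b_{ik}]_+$ exponents. Once this is pinned down the induction runs cleanly. A secondary subtlety is the indexing convention: $M(n)$ is defined only for $n \le 0$ and $n \ge 3$, with $M(1), M(2)$ corresponding to the initial variables $x_1, x_2$ which are \emph{not} Caldero--Chapoton functions of genuine modules, so one should make sure the induction is phrased so it never tries to reflect "through" the initial seed.
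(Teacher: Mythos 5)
Your overall strategy — induction on $n$ via the recursion in \Cref{cor: recursion of cc function}, base cases $M(3)=E_1$, $M(0)=E_2$, surjectivity from \Cref{lemma: incoming map is surjective rank 2}, and distinctness of the $x_n$ from the distinctness of the $\mathbf d$-vectors $\alpha(n)$ — is precisely the paper's, and the base case and the distinctness argument are handled correctly.

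The inductive step, however, contains a concrete error that is more than bookkeeping. You assert that ``$M(n) = F^+ M(n-1)$, where the reflection is at vertex $2$ if $n$ is odd and at vertex $1$ if $n$ is even.'' This cannot hold with $M(n-1)$ meaning the $H$-module $M(n-1)\in\rep H$ defined in the paper: the sink of $\Omega$ is vertex $2$, so $F_2^+$ has codomain $\rep H^*$, not $\rep H$, whereas all $M(n)$ live in $\rep H$. Unwinding \Cref{rmk: construction of M(n)}, one sees that the final functor in the defining chain is always $F^{(0)}\colon \rep H^{(1)} = \rep H^* \to \rep H$, i.e.\ always $F_1^+$ (vertex $1$ is the sink of $\Omega^*$), regardless of the parity of $n$; and the module it is applied to is $M(n-1,H^*)$, the analogously constructed module over $H^*$, not $M(n-1)\in\rep H$. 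Consequently the inductive hypothesis ``$X_{M(m)} = x_m$ for $3\le m < n$'', which is a statement about $H$-modules, is not strong enough: one needs it for $H^*$-modules as well, and then one needs to relate $X_{M(n-1, H^*)}$ to a cluster variable of $\mathcal A(B)$ with the indices shifted by one. The paper does exactly this by invoking the symmetry between $H$ and $H^*$ (the line ``$x_{n+4} = X_{M(n+3,H^*)}(x_3, x_2)$'') before applying \Cref{cor: recursion of cc function} once with $F_1^+$. So the missing piece is not a routine sign check: you need to carry the $H^*$-statement through the induction, identify the module to which the corollary is applied as one over $H^*$, and justify the shift in cluster-variable indexing when the seed is moved from $(x_1,x_2)$ to $(x_3,x_2)$. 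Once that is inserted, the remainder of your argument (including the mirror treatment for $n\le 0$ with $F^-$, and the final bijectivity via \Cref{rmk: lf ind rigid modules rank 2} and \Cref{rmk: express cc function by f polynomial}) goes through as you describe.
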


\begin{proof}
We prove $X_{M(n+3)} = x_{n+3}$ and $X_{M(-n)} = x_{-n}$ for $n\geq 0$ by induction on $n$ using the recursion \Cref{cor: recursion of cc function}. For $n = 0$, we have $M(0) = E_2$ and $M(3) = E_1$, thus
\[
X_{M(0)} = x_2^{-1}(1 + x_1^b) = x_0 \quad \text{and} \quad X_{M(3)} = x_1^{-1}(1 + x_2^c) = x_3.
\]
Assume the statement is true for some $n\geq 0$. By the obvious symmetry between $H$ and $H^*$ by switching the orientation, we have that
\[
x_{n+4} = X_{M(n+3, H^*)}(x_3, x_2),
\]
where the notation $M(n+3, H^*)$ stresses that the module $M(n+3, H^*)$ is constructed for $H^*$ instead of $H$. We would like to apply \Cref{cor: recursion of cc function} to $M \coloneqq M(n+3, H^*)$ and the sink reflection functor
\[
F_1^+ \colon \rep H^* \rightarrow \rep H.
\]
The condition that $M_{1,\mathrm{in}}$ is surjective is guaranteed by \Cref{lemma: incoming map is surjective rank 2} (applied to the algebra $H^*$). Then by \Cref{cor: recursion of cc function}, we have
\[
X_{M(n+3, H^*)}(x_3, x_2) = X_{M(n+4, H)}(x_1, x_2)
\]
where $M(n+4, H) = F^+_1(M(n+3, H^*))$ and $x_1 = x_3^{-1}(1 + x_2^c)$. Immediately we obtain
\[
x_{n+4} = X_{M(n+4, H)}(x_1, x_2).
\]
The proof for $M(-n)$ for $n\geq 0$ uses a similar induction.

Now that $x_n = X_{M(n)}$ is a Laurent polynomial in $x_1$ and $x_2$, the unique minimal common denominator (up to a scalar) is easily seen to be $x^{\alpha(n)}$. Since the positive roots $\alpha(n)$ are distinct, so are the cluster variables $x_n$.
\end{proof}

\begin{remark}
To a pair $(b,c)\in \mathbb Z_{>0}^2$, one can also associate an algebra $H(b,c)$ defined as the path algebra $\mathbb CQ$ of the quiver $Q = \begin{tikzcd}
1\ar[loop left, "\varepsilon_1"] \ar[r, "\alpha"] &2 \ar[loop right, "\varepsilon_2"]
\end{tikzcd}$ modulo the relations $\varepsilon_1^c = 0$ and $\varepsilon_2^b = 0$.
When $b$ and $c$ are coprime, the algebra $H(b,c)$ is the same as $H(C,D,\Omega)$ for $D = \begin{bsmallmatrix}
c & 0\\
0 & b
\end{bsmallmatrix}$. However, when $b$ and $c$ are not coprime, the algebra $H$ is not included in the construction of \cite{geiss2017quivers}. We note that \Cref{thm: intro main theorem rank 2} can be easily adapted to using the algebras $H(b,c)$. In the case $b = c =2$, the algebra $H(b,c)$ coincides with a construction in \cite{labardini2022gentle} where the ordinary Caldero--Chapoton functions are shown to give cluster variables of a generalized cluster algebra.
\end{remark}

\section{Dynkin cases}\label{section: dynkin case}

The purpose of this section is to give a new proof of \Cref{thm: intro main theorem dynkin} (\Cref{thm: main theorem GLS Dynkin}). Let $C$ be of Dynkin type and $B = B(C, \Omega)$ the associated skew-symmetrizable matrix. Denote by $\Delta^+_C$ the set of positive roots associated to $C$.

\begin{definition}\label{def: admissible sequence}
A sequence $\mathbf i = (i_1, \dots, i_{k+1})\in I^{k+1}$ is called \emph{adapted to} an orientation $\Omega$ of $C$ (or $\Omega$-adapted) if
\begin{itemize}
    \item[] $i_1$ is a sink for $\Omega$,
    \item[] $i_2$ is a sink for $s_{i_1}(\Omega)$,
    \item[] $\quad \vdots$
    \item[] $i_{k}$ is a sink for $s_{i_{k-1}}\cdots s_{i_2}s_{i_1}(\Omega)$.
\end{itemize}
\end{definition}

The following lemma is well-known; see for example \cite[Chapter 3]{kirillov2016quiver}.

\begin{lemma}\label{lemma: sequence for a positive root}
Let $\beta$ be a positive root for $C$ and $\Omega$ be an orientation. Then there always exists a sequence $\mathbf i = (i_1, \dots, i_{k+1})$ adapted to $\Omega$ for $\beta$ such that
\begin{itemize}
    \item[] $s_{i_k}(\alpha_{i_{k+1}})\in \Delta^+_C$,
    \item[] $s_{i_{k-1}}s_{i_k}(\alpha_{i_{k+1}})\in \Delta^+_C$,
    \item[] $\quad \vdots$
    \item[] $\beta = s_{i_{1}}\cdots s_{i_{k-1}}s_{i_{k}}(\alpha_{i_{k+1}}) \in \Delta^+_C$.
\end{itemize}
\end{lemma}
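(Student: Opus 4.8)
The plan is to induct on the height of the positive root $\beta$, where the height is the sum of the coefficients of $\beta$ when expressed in the basis of simple roots $\alpha_1, \dots, \alpha_n$. For the base case, suppose $\beta = \alpha_i$ is simple. Then I would take $\mathbf i$ to be any $\Omega$-adapted sequence that starts by exhausting the sinks so that $i$ eventually becomes a sink: concretely, if $\beta = \alpha_{i_1}$ is already a sink of $\Omega$ we may take the one-term sequence $(i_1)$ (with $k=0$), and in general we prepend the sinks that must be removed before $i$ itself becomes a sink, each step only permuting simple roots among themselves so the $\Delta_C^+$-conditions hold trivially. For the inductive step, let $i_1$ be a sink of $\Omega$. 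The key dichotomy is whether $\beta = \alpha_{i_1}$ (handled as a base case) or $\beta \neq \alpha_{i_1}$. In the latter case, since $i_1$ is a sink, the simple reflection $s_{i_1}$ sends $\beta$ to another \emph{positive} root $\beta' := s_{i_1}(\beta) \in \Delta_C^+$ — this is the standard fact that a simple reflection at a sink (equivalently, a reflection associated to a simple root) permutes the positive roots other than that simple root. Moreover $s_{i_1}(\Omega)$ has $i_1$ as a source, and the height of $\beta'$ is strictly smaller than that of $\beta$ only when $\langle \beta, \alpha_{i_1}^\vee\rangle > 0$; to make the induction work I would instead choose $i_1$ to be a sink such that $\langle \beta, \alpha_{i_1}^\vee \rangle > 0$, which exists because $\beta$ is a positive combination of simples and the Coxeter/quiver-orientation combinatorics guarantees some sink pairs nontrivially with $\beta$ (if no sink did, iterating $s_{i_1}$ over all sinks would fix $\beta$, forcing $\beta$ to be $0$ or outside $\Delta_C^+$).

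With such an $i_1$ chosen, $\beta' = s_{i_1}(\beta)$ has strictly smaller height and lies in $\Delta_C^+$. The orientation $s_{i_1}(\Omega)$ now has $i_1$ as a source, so I would apply the induction hypothesis not directly to $s_{i_1}(\Omega)$ but after one further observation: a sink $j$ of $s_{i_1}(\Omega)$ with $j \neq i_1$ is the next vertex, and continuing to reflect we eventually express $\beta' = s_{j_1} \cdots s_{j_{\ell-1}} s_{j_\ell}(\alpha_{j_{\ell+1}})$ for a sequence $(j_1, \dots, j_{\ell+1})$ adapted to $s_{i_1}(\Omega)$ satisfying all the intermediate positivity conditions. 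Then prepending $i_1$ gives the sequence $\mathbf i = (i_1, j_1, \dots, j_{\ell+1})$, which is adapted to $\Omega$ by construction (since $i_1$ is a sink of $\Omega$ and $j_1$ is then a sink of $s_{i_1}(\Omega)$, etc.), and the required chain of positivity conditions for $\mathbf i$ is exactly the chain for $(j_1, \dots, j_{\ell+1})$ together with the top relation $\beta = s_{i_1}(\beta') = s_{i_1} s_{j_1} \cdots s_{j_\ell}(\alpha_{j_{\ell+1}}) \in \Delta_C^+$, which holds since $\beta \in \Delta_C^+$ by hypothesis.

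The main obstacle I anticipate is the choice of the first reflection index: one must produce a sink $i_1$ of $\Omega$ for which $s_{i_1}$ strictly decreases the height of $\beta$, i.e. with $\langle \beta, \alpha_{i_1}^\vee\rangle > 0$, and simultaneously ensure that the full Coxeter word $s_{i_1} s_{j_1} \cdots s_{j_\ell}$ built this way is \emph{reduced} (equivalently $\Omega$-adapted in the precise sense of Definition \ref{def: admissible sequence}), so that the constructed sequence genuinely lands in a single $\Omega$-adapted sequence rather than looping. This is where one invokes the standard theory of Coxeter elements and adapted reduced words for acyclic orientations — the relevant statements are collected in, e.g., \cite[Chapter 3]{kirillov2016quiver}, and the cleanest route is to cite that this induction on height with the sink-decreasing choice is exactly the well-known procedure expressing every positive root via an adapted sequence. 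I would therefore keep the proof short: set up the height induction, isolate the sink-decreasing choice as the key point with a one-line justification, and defer the reducedness/adaptedness bookkeeping to the cited reference.
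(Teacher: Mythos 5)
The key step in your induction is the claim that, whenever $\beta$ is a non-simple positive root, there exists a sink $i_1$ of $\Omega$ with $\langle \beta, \alpha_{i_1}^\vee\rangle > 0$ (so that $s_{i_1}(\beta)$ has strictly smaller height). This claim is false, and your informal justification for it does not hold. Take $C$ of type $D_4$ with center vertex $2$ and arm vertices $1,3,4$, and let $\Omega$ be the orientation with all three arrows pointing into vertex $2$, so that $2$ is the unique sink. Take $\beta = \alpha_1 + \alpha_2 + \alpha_3$, a positive root. Then $s_2(\beta) = \beta - (c_{21}+c_{22}+c_{23})\alpha_2 = \beta - (-1+2-1)\alpha_2 = \beta$, so the unique sink has pairing $0$ with $\beta$ and $s_2$ fixes $\beta$. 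In particular no sink decreases the height, so your height induction stalls immediately; and, contrary to your heuristic, a positive non-simple root can be fixed by every sink reflection without being $0$ or leaving $\Delta_C^+$. The lemma is nonetheless true here: one must begin with the adapted step $i_1 = 2$ even though it leaves the height unchanged, then reflect at one of the new sinks $1,3,4$ of $s_2(\Omega)$ to decrease height, e.g.\ $\mathbf{i} = (2,1,3,2)$ gives $\beta = s_2 s_1 s_3(\alpha_2)$ with all intermediate steps positive. This shows the inductive measure cannot be the height of $\beta$ alone. The standard argument (and essentially what the cited reference \cite[Chapter 3]{kirillov2016quiver} does, which the paper simply quotes) runs instead as follows: fix a complete sink ordering $(j_1,\dots,j_n)$ of $\Omega$, so that $c = s_{j_n}\cdots s_{j_1}$ is a Coxeter element, and repeat it periodically; since $C$ is of finite (Dynkin) type, every positive root is eventually sent to a negative root along the resulting infinite word, and the first index at which the sign flips produces the required adapted sequence, with positivity of all intermediate terms built in. If you want to keep an induction, it should be on the length of this Coxeter trajectory rather than on height, allowing sink reflections that fix $\beta$. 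As written, the proof has a genuine gap at the selection of $i_1$.
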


It is clear that for such a sequence $\mathbf i$ in the above lemma, $i_{k+1}$ must not be equal to $i_{k}$. To any sequence $\mathbf i = (i_1, \dots, i_{k+1})$, consider the following path in $\mathbb T_n$
\[
t_0 \frac{i_1}{\quad\quad} t_1 \frac{i_2}{\quad\quad} \cdots \frac{i_k}{\quad\quad} t_k \frac{i_{k+1}}{\quad\quad} t_{k+1}.
\]
Recall the cluster mutations as introduced in \Cref{subsection: lf cc function} which generate cluster variables. Recursively performing cluster mutations from $t_0$ to $t_{k+1}$ we obtain an $n$-tuple $(x_{1;t_{k+1}},\dots, x_{n;t_{k+1}})$ of cluster variables associated to $t_{k+1}$. we denote $x_\mathbf i \coloneqq x_{i_{k+1};t_{k+1}}$.

Suppose that $\mathbf i$ is adapted to $\Omega$ for a positive root $\beta$ as in \Cref{lemma: sequence for a positive root}. Note that $i_\ell$ is a source for the orientation $\Omega_\ell \coloneqq s_{i_\ell}\cdots s_{i_1}(\Omega)$ for $\ell = 1, \dots, k$. Let $H_\ell \coloneqq H(C, D, \Omega_\ell)$ for $\ell = 1, \dots, k$ and $H_0 \coloneqq H$. We have source reflection functors
\[
F_{i_\ell}^- \colon \rep H_\ell \rightarrow \rep H_{\ell-1}.
\]
Define the module
\[
M_{\mathbf i} \coloneqq F_{i_1}^-\cdots F_{i_k}^- (E_{i_{k+1}})\in \rep H.
\]

\begin{theorem}[{\cite[Theorem 1.2]{geiss2018quivers}}]\label{thm: main theorem GLS Dynkin}
The map $M\mapsto X_M$ induces a bijection between isomorphism classes of locally free indecomposable rigid $H(C, D, \Omega)$-modules and the non-initial cluster variables of the cluster algebra $\mathcal A(B)$.
\end{theorem}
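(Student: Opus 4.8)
The plan is to combine the key recursion on Caldero--Chapoton functions (\Cref{cor: recursion of cc function general}) with the combinatorics of adapted sequences for positive roots (\Cref{lemma: sequence for a positive root}), proceeding in three stages: first showing every non-initial cluster variable equals some $X_{M_{\mathbf i}}$, then showing each such $M_{\mathbf i}$ is a locally free indecomposable rigid module depending only on the root $\beta$ and not on the chosen sequence $\mathbf i$, and finally upgrading this surjection onto cluster variables to a bijection by matching $\mathbf d$-vectors with positive roots.

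For the first stage I would argue by induction on the length $k$ of an adapted sequence $\mathbf i = (i_1, \dots, i_{k+1})$ witnessing a positive root $\beta$ (\Cref{lemma: sequence for a positive root}), running the induction simultaneously over the family $H_0, H_1, \dots, H_k$ of reflected algebras. The base case is $X_{E_j} = x_j$-type content from \Cref{ex: cc function of E_k}, giving the cluster variable obtained by a single mutation. For the inductive step, since $i_1$ is a sink for $\Omega$ (equivalently $i_1$ is a source for $\Omega_1 = s_{i_1}(\Omega)$) and the truncated sequence $(i_2, \dots, i_{k+1})$ is adapted to $\Omega_1$, the module $N \coloneqq F_{i_2}^- \cdots F_{i_k}^-(E_{i_{k+1}}) \in \rep H_1$ satisfies $X_N = x_{(i_2,\dots,i_{k+1})}$ by induction, and $M_{\mathbf i} = F_{i_1}^-(N)$. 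The dual (source) version of \Cref{cor: recursion of cc function general} then gives $X_N(x_1,\dots,x_n) = X_{M_{\mathbf i}}(x_1',\dots,x_n')$ with $x_k'$ the exchange monomial, while on the cluster side the hypothesis $s_{i_1}\cdots s_{i_k}(\alpha_{i_{k+1}}) \in \Delta^+_C$ together with the fact that $\mu_{i_1}(B(C,\Omega_1)) = B(C,\Omega)$ at a source ensures the variable $x_{\mathbf i}$ is obtained from $x_{(i_2,\dots,i_{k+1})}$ by exactly the same monomial substitution; hence $X_{M_{\mathbf i}} = x_{\mathbf i}$. I would need to check that the surjectivity/injectivity hypotheses of the recursion are met, which follows from \Cref{lemma: locally free and rigid implies surjective} once we know $M_{\mathbf i}$ is rigid, locally free, and has no summand $E_{i_1}$ — and the latter holds because, by \Cref{prop: reflection preserve locally free and rigid} and \Cref{rmk: reflection preserve indecomposable}, $M_{\mathbf i}$ is locally free indecomposable rigid with $\rk M_{\mathbf i} = \beta \neq \alpha_{i_1}$ (here one uses that the intermediate roots stay positive, so all reflections are "genuine" and rank vectors track the root reflections).

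The main obstacle is the bijectivity, specifically two independence/injectivity points. First, one must show $M_{\mathbf i}$ depends only on $\beta = \rk M_{\mathbf i}$ and not on the adapted sequence $\mathbf i$; in the Dynkin case this follows because $C$ of Dynkin type forces every locally free indecomposable rigid $H$-module to be determined up to isomorphism by its rank vector (a positive root), a structural input analogous to \Cref{rmk: lf ind rigid modules rank 2} — indeed in Dynkin type the category $\replf H$ has finitely many indecomposables, one per positive root, so $M_{\mathbf i} \cong M_\beta$. Second, one must know that \emph{every} positive root arises as some adapted $\beta$ (this is \Cref{lemma: sequence for a positive root}) and that \emph{every} non-initial cluster variable is reached by almost sink and source mutations in Dynkin type — a classical fact about finite-type cluster algebras that I would cite. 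Granting these, the map $M \mapsto X_M$ sends the finite set of locally free indecomposable rigid modules $\{M_\beta : \beta \in \Delta^+_C\}$ onto the non-initial cluster variables; it is injective because, by \Cref{rmk: express cc function by f polynomial}, the $\mathbf d$-vector of $X_{M_\beta}$ is $\rk M_\beta = \beta$, and distinct roots give distinct $\mathbf d$-vectors, hence distinct Laurent polynomials. Combining surjectivity (stage one), the fact that both sides are indexed by $\Delta^+_C$, and this injectivity yields the desired bijection. The one genuinely delicate point to get right is the bookkeeping in stage one ensuring the cluster-side substitution and the representation-side reflection substitution are literally the same map on the $x_i$'s; this is where the identity $\mu_k(B(C,\Omega)) = B(C, s_k(\Omega))$ at a sink/source and the explicit form of $x_k'$ in \Cref{cor: recursion of cc function general} must be lined up carefully with \eqref{eq: cluster mutation}.
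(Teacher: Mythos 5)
Your proposal follows essentially the same route as the paper: an induction on the length of an $\Omega$-adapted sequence, driven by the sink/source recursion of \Cref{cor: recursion of cc function general}, with the inductive hypothesis verified via \Cref{lemma: locally free and rigid implies surjective}, \Cref{prop: reflection preserve locally free and rigid} and \Cref{rmk: reflection preserve indecomposable} (and in particular the observation that the intermediate $\beta'$ is a positive root distinct from $\alpha_{i_1}$, so $M_{\mathbf i'}\not\cong E_{i_1}$), followed by the two classifications (GLS for modules versus positive roots, Fomin--Zelevinsky for cluster variables versus $\mathbf d$-vectors) to close the bijection. One small imprecision worth flagging: the claim that ``in Dynkin type the category $\replf H$ has finitely many indecomposables'' is stronger than what is needed or asserted in the literature --- the correct structural input, used by the paper via \cite[Theorem 1.3]{geiss2017quivers} (and echoed in \Cref{rmk: lf ind rigid modules rank 2}), is that locally free \emph{indecomposable rigid} $H$-modules are classified up to isomorphism by their rank vectors, which are the positive roots; $\replf H$ itself need not be of finite representation type.
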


\begin{proof}
For $\mathbf i$ adapted to $\Omega$ for a positive root $\beta$, we show by induction on the length of $\mathbf i$ that
\begin{equation}\label{eq: star}
    \tag{$\star$} \text{$M_\mathbf i$ is locally free, indecomposable and rigid with rank vector $\beta$, and that $X_{M_\mathbf i} = x_\mathbf i$.}
\end{equation}
If $\mathbf i = (i)$ is of length one, then $M_\mathbf i = E_i$. Notice that $i$ is not necessarily a sink or source. As in \Cref{ex: cc function of E_k}, we have
\[
X_{E_i} = x_i^{-1}\left( \prod_{j = 1}^n x_j^{[b_{ji}]_+} + \prod_{j = 1}^n x_j^{[-b_{ji}]_+} \right) = x_{(i)}.
\]

Assume that (\ref{eq: star}) is true for any $\mathbf i$ of length no greater than $k\in \mathbb N$. Let $\mathbf i = (i_\ell)_{\ell=1}^{k+1}$ and $\mathbf i'$ be the sequence
\[
(i_2, i_3, \dots, i_{k+1})\in I^{k},
\]
which is adapted to the orientation $s_{i_1}(\Omega)$. By assumption, the module
\[
M_{\mathbf i'} \coloneqq F_{i_2}^-\cdots F_{i_k}^-(E_{i_{k+1}})\in \rep s_{i_1}(H)
\]
is locally free, rigid and indecomposable with rank vector $\beta' \coloneqq s_{i_2}\dots s_{i_k}(\alpha_{i_{k+1}})$ and that $X_{M_{\mathbf i'}} = x_{\mathbf i'}\in \mathcal A(\mu_{i_1}(B))$. Since $s_{i_1}(\beta') = \beta$ and $\beta' \in \Delta^+(C)$, the positive root $\beta'$ cannot be a positive multiple of $\alpha_{i_1}$. Thus the indecomposable module $M_{\mathbf i'}$ does not have any direct summand isomorphic to $E_{i_1}$. By \Cref{lemma: locally free and rigid implies surjective}, the map $(M_{\mathbf i'})_{i_1,\mathrm{out}}$ is injective. By \Cref{prop: reflection preserve locally free and rigid}, we have that $M_{\mathbf i} = F_{i_1}^-(M_{\mathbf i'})$ is locally free, rigid and indecomposable with rank vector $\beta = s_{i_1}(\beta')$.

Applying \Cref{cor: recursion of cc function general} to $M_\mathbf i\in \rep H$ and $M_{\mathbf i'} \cong F_{i_1}^+(M_\mathbf i) \in \rep s_{i_1}(H)$, we have
\[
X_{M_\mathbf i}(x_1,\dots,x_{i_1},\dots,x_n) = X_{M_{\mathbf i'}}(x_1, \dots, x_{i_1}', \dots, x_n),
\]
where 
\[
x_{i_1}' = x_{i_1}^{-1}\left(1 + \prod_{j\in \Omega(i_1,-)} x_j^{-b_{ji_1}} \right).
\]
Notice that
\[
X_{M_{\mathbf i'}}(x_1, \dots,x_{i_1}',\dots, x_n) = x_{\mathbf i'}(x_1, \dots,x_{i_1}',\dots x_n) = x_\mathbf i(x_1, \dots, x_{i_1}, \dots, x_n)\in \mathcal A(B).
\]
Hence $X_{M_\mathbf i} = x_{\mathbf i}\in \mathbb Z[x_1^\pm, \dots, x_n^\pm]$, which completes the induction and proves (\ref{eq: star}).

By \cite[Theorem 1.3]{geiss2017quivers}, the module ${M_\mathbf i}$ constructed from $\mathbf i$ only depends on the positive root $\beta$ and the (thus well-defined) map $\beta\mapsto {M_\mathbf i}$ induces a bijection from $\Delta^+_C$ to locally free indecomposable rigid $H$-modules (up to isomorphism). Thus the formula $x_\mathbf i =  X_{M_\mathbf i}$ implies that the cluster variable $x(\beta)\coloneqq x_\mathbf i$ also only depends on $\beta$. In view of \Cref{rmk: express cc function by f polynomial}, each $x(\beta)$ has $\mathbf d$-vector $\beta$. By \cite{fomin2003cluster}, these $x(\beta)$ are exactly the non-initial cluster variables of $\mathcal A(B)$, hence the desired bijection.
\end{proof}

\section{Beyond Dynkin and rank 2 cases}\label{section: beyond}
For $(C, D, \Omega)$ which is neither of Dynkin type nor in rank 2, in general we will not be able to reach all locally free indecomposable rigid modules by reflections. In this section, we prove locally free Caldero--Chapoton formulas for cluster variables that can be obtained by \emph{almost} sink and source mutations. In particular, any cluster variable on the bipartite belt \cite{fomin2007cluster} can be obtained this way.

\begin{definition}[cf. {\cite{rupel2011quantum}}]
A sequence $\mathbf i = (i_1, \dots, i_{k+1})\in I^{k+1}$ is called \emph{admissible} to an orientation $\Omega$ of $C$ if
\begin{itemize}
    \item[] $i_1$ is a sink or source for $\Omega$,
    \item[] $i_2$ is a sink or source for $s_{i_1}(\Omega)$,
    \item[] $\quad \vdots$
    \item[] $i_{k}$ is a sink or source for $s_{i_{k-1}}\cdots s_{i_2}s_{i_1}(\Omega)$.
\end{itemize}
\end{definition}

Let $B = B(C, \Omega)$ and $\mathcal A(B)$ be the (coefficient-free) cluster algebra associated to $B$. As defined in \Cref{section: dynkin case}, for an arbitrary sequence $\mathbf i$, there is the cluster variable $x_\mathbf i\in \mathcal A(B)$ by successive cluster mutations.

\begin{definition}
We say that the cluster variable $x_\mathbf i\in \mathcal A(B)$ corresponding to a sequence $\mathbf i = (i_\ell)_{\ell}$ is obtained by \emph{almost sink or source mutations} if $\mathbf i$ is admissible to $\Omega$.
\end{definition}

\begin{remark}
We note that by definition the last index $i_{k+1}$ can be arbitrary in $I$. It is the only step in the mutation sequence $(\mu_{i_1}, \dots, \mu_{i_{k+1}})$ that may not be at a sink or source, thus the term \emph{almost}.
\end{remark}

The following is our main result in this section.

\begin{theorem}\label{thm: main theorem general}
For any admissible sequence $\mathbf i$, either the cluster variable $x_\mathbf i$ is an initial one or there is a locally free indecomposable rigid $H(C,D,\Omega)$-module $M_\mathbf i$ such that
\[
X_{M_\mathbf i}(x_1, \dots, x_n) = x_\mathbf i.
\]
Moreover, the module $M_\mathbf i$ is uniquely determined (up to isomorphism) by $x_\mathbf i$.
\end{theorem}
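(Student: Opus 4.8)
The plan is to mimic the induction carried out in the proof of \Cref{thm: main theorem GLS Dynkin}, but with two modifications: allow the intermediate reflections to be at a source \emph{or} a sink, and deal with the fact that there is no longer an ambient finite root system to guarantee that the reflected rank vectors stay positive. First I would set up the data attached to an admissible sequence $\mathbf i = (i_1, \dots, i_{k+1})$: the orientations $\Omega_\ell := s_{i_\ell}\cdots s_{i_1}(\Omega)$, the algebras $H_\ell := H(C, D, \Omega_\ell)$ with $H_0 = H$, and the path $t_0 \frac{i_1}{} t_1 \frac{i_2}{} \cdots \frac{i_{k+1}}{} t_{k+1}$ in $\mathbb T_n$ producing the cluster variable $x_\mathbf i = x_{i_{k+1}; t_{k+1}}$. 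Since $i_1$ is a sink or source of $\Omega$, one has $\mu_{i_1}(B) = B(C, s_{i_1}(\Omega))$, so the cluster algebra is genuinely being mutated along the reflection sequence until the (possibly arbitrary) last step.

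The induction is on the length of $\mathbf i$. The base case $\mathbf i = (i)$ gives $M_\mathbf i = E_i$ and $X_{E_i} = x_{(i)}$ by \Cref{ex: cc function of E_k}, exactly as in the Dynkin proof; here $x_{(i)}$ may or may not be initial. For the inductive step, write $\mathbf i' = (i_2, \dots, i_{k+1})$, which is admissible to $s_{i_1}(\Omega)$, so by induction either $x_{\mathbf i'}$ is initial — in which case $M_{\mathbf i'} = E_j$ for the appropriate $j$, and the argument continues formally with this choice — or there is a locally free indecomposable rigid $s_{i_1}(H)$-module $M_{\mathbf i'}$ with $X_{M_{\mathbf i'}} = x_{\mathbf i'} \in \mathcal A(\mu_{i_1}(B))$ and rank vector $\beta'$. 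The crucial point is whether $M_{\mathbf i'}$ has a direct summand isomorphic to $E_{i_1}$: if it does not, then \Cref{lemma: locally free and rigid implies surjective} makes $(M_{\mathbf i'})_{i_1,\mathrm{in}}$ surjective (when $i_1$ is a sink of $s_{i_1}(\Omega)$, i.e.\ a source of $\Omega$) or $(M_{\mathbf i'})_{i_1,\mathrm{out}}$ injective (in the other case), and \Cref{prop: reflection preserve locally free and rigid} together with \Cref{rmk: reflection preserve indecomposable} yield that $M_\mathbf i := F_{i_1}^{\mp}(M_{\mathbf i'})$ is locally free, indecomposable, rigid with rank vector $s_{i_1}(\beta')$; then \Cref{cor: recursion of cc function general} (applied to $M_\mathbf i \in \rep H$ and $M_{\mathbf i'} \cong F_{i_1}^+ M_\mathbf i$, or its source-analogue) gives $X_{M_\mathbf i} = x_\mathbf i$ exactly as in the Dynkin case. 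If instead $M_{\mathbf i'} \cong E_{i_1}$ (it cannot properly contain $E_{i_1}$ as a summand, being indecomposable), then the cluster variable $x_{\mathbf i'} = X_{E_{i_1}}$ is literally the cluster variable produced by mutating at $i_1$ one more time, and one checks that $x_\mathbf i = x_{i_{k+1};t_{k+1}}$ is then an \emph{initial} cluster variable $x_{i_{k+1}}$ of $\mathcal A(B)$ (the reflection $F_{i_1}^{\mp}$ sends $E_{i_1}$ across, and substituting $x_{i_1}' = X_{E_{i_1}}$-expression back undoes the mutation). This last bookkeeping — recognising precisely when the process bottoms out at an initial variable rather than producing a module — is, I expect, the main obstacle, because unlike the Dynkin situation there is no global positivity of roots to lean on and one must argue directly that the only way to fail to get a locally free indecomposable rigid module is exactly this degenerate case.

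Finally, for the uniqueness statement I would invoke \Cref{rmk: express cc function by f polynomial}: the Laurent polynomial $X_{M_\mathbf i}$ has a well-defined $\mathbf d$-vector equal to $\rk M_\mathbf i$, so $x_\mathbf i$ determines $\rk M_\mathbf i$; and by \cite{geiss2018rigid} (cf.\ \Cref{rmk: lf ind rigid modules rank 2}) a locally free indecomposable rigid $H$-module is determined up to isomorphism by its rank vector. Hence $M_\mathbf i$ is unique given $x_\mathbf i$, independent of which admissible sequence $\mathbf i$ realising $x_\mathbf i$ was chosen. It remains to remark that cluster variables on the bipartite belt arise from admissible sequences alternating between the two parts of a bipartition of $I$ (such a bipartition exists precisely when $Q^\circ$ is acyclic, which it is), so the theorem applies to all of them.
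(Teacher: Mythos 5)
Your overall architecture matches the paper's exactly: induct on the length of $\mathbf i$, peel off the first reflection, apply \Cref{cor: recursion of cc function general} to pass the CC recursion through, and use the $\mathbf d$-vector plus \cite{geiss2018rigid} for uniqueness. You also correctly identify the genuine difficulty, namely the degenerate cases where the process ``bottoms out'' at an initial variable. The issue is in how you propose to handle that bookkeeping, and your proposed fix does not actually work.

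Concretely: when the inductive hypothesis returns ``$x_{\mathbf i'}$ is initial, say $x_{\mathbf i'} = x_j$,'' you say to set $M_{\mathbf i'} = E_j$ and ``continue formally.'' But $X_{E_j} \neq x_j$ (rather $X_{E_j}$ is the once-mutated variable, as in \Cref{ex: cc function of E_k}), so this choice breaks the inductive equality $X_{M_{\mathbf i'}} = x_{\mathbf i'}$; and if you then reflect the genuine module $E_j$ at $i_1$ you do not stay with $E_j$ (if $j$ is adjacent to $i_1$ the rank at $i_1$ changes), whereas what you need is the identity $x_j \mapsto x_j$ since $x_j$ should remain the initial variable after the $\mu_{i_1}$-mutation. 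Similarly, in the other degenerate case you write that ``the reflection $F_{i_1}^{\mp}$ sends $E_{i_1}$ across,'' but in fact $F_k^{\pm}(E_k) = 0$: the sink reflection at $k$ has $M_{k,\mathrm{in}}$ with domain $0$, so the kernel is $0$ (and dually for the source). You also state that $x_\mathbf i$ is then the initial variable $x_{i_{k+1}}$, but the correct conclusion is that $x_\mathbf i = x_{i_1}$ (substituting $x_{i_1}' = X_{E_{i_1}}$ into $x_{\mathbf i'} = X_{E_{i_1}}$ cancels back to $x_{i_1}$).

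The paper's fix is cleaner than yours and avoids all of this: it extends the state space from $\rep H$ to $\rep H \cup \{x_1, \dots, x_n\}$ and redefines the operation $F_i^\pm$ on this set by $F_i^\pm(E_i) = x_i$, $F_i^\pm(x_i) = E_i$, $F_i^\pm(x_j) = x_j$ for $j \neq i$, and the honest reflection functor on modules not isomorphic to $E_i$. With this, the inductive statement is uniform (``if $M_\mathbf i$ is a module it is locally free indecomposable rigid, and in all cases $X_{M_\mathbf i} = x_\mathbf i$,'' where $X_{x_i} := x_i$), and each inductive step splits cleanly into the five sub-cases (1.1), (1.2), (1.3), (2.1), (2.2) handled by \Cref{cor: recursion of cc function general}, \Cref{lemma: locally free and rigid implies surjective}, and \Cref{prop: reflection preserve locally free and rigid}. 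So: right skeleton, right difficulties spotted, but the mechanism you propose for the degenerate cases is wrong, and the missing idea is to track formal variable symbols rather than substitute generalized simples $E_j$.
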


\begin{proof}
We slightly modify the functors $F_i^\pm$ to define the operations
\[
    F_i^\pm \colon \rep H\cup \{x_1, \dots, x_n\} \rightarrow \rep s_i(H)\cup \{x_1, \dots, x_n\}
\]
such that
\[
F_{i}^\pm(M) \coloneqq \begin{cases}
F_i^\pm(M) & \text{if $M\in \rep H$ not isomorphic to $E_i$},\\
x_i & \text{if $M \cong E_i$},\\
E_i & \text{if $M = x_i$},\\
x_j & \text{if $M = x_j$ for $j\neq i$}.
\end{cases}
\]
For an admissible sequence $\mathbf i$, let
\[
M_\mathbf i \coloneqq F_{i_1}^\pm F_{i_2}^\pm \cdots F_{i_{k}}^\pm (E_{i_{k+1}}) \in \rep H\cup \{x_1, \dots, x_n\}
\]
where each sign is chosen on whether $i_\ell$ is a sink or source of $s_{i_{\ell-1}}\cdots s_{i_1}(\Omega)$.
We define $X_{M_\mathbf i}$ as in \Cref{def: cc function general} if $M_\mathbf i$ is a locally free $H$-module or $X_{M_\mathbf i} \coloneqq x_i$ if $M_\mathbf i = x_i$ for some $i\in \{1,\dots, n\}$. We next show by induction on the length of $\mathbf i$ that
\begin{enumerate}
    \item[$\bullet$] if $M_\mathbf i$ is indeed a module, then it must be locally free, indecomposable and rigid, and
    \item[$\bullet$] $X_{M_\mathbf i} = x_\mathbf i$.
\end{enumerate}

The induction is a slight modification of the proof of \Cref{thm: main theorem GLS Dynkin}. For $\mathbf i = (i)$ of length one, $M_{(i)} = E_i$ and the statement is clearly true. Assume that the statement is true for $\mathbf i$ of length no greater than $k$. Let $\mathbf i = (i_\ell)_\ell$ be of length $k+1$ admissible to $\Omega$ and $\mathbf i' = (i_2, \dots, i_{k+1})$, which is admissible to $s_{i_1}(\Omega)$. By assumption, either
\begin{enumerate}
    \item $M_{\mathbf i'}\in \rep s_{i_1}(H)$ is locally free indecomposable rigid or
    \item $M_{\mathbf i'} = x_i$ for some $i\in I$.
\end{enumerate}
In both cases, we have by assumption that $X_{M_{\mathbf i'}} = x_{\mathbf i'} \in \mathcal A(\mu_{i_1}(B))$.

In case (1), there are three sub-cases
\begin{enumerate}
    \item [(1.1)] $M_{\mathbf i'} \cong E_{i_1}$;
    \item [(1.2)] $i_1$ is a sink of $s_{i_1}(\Omega)$ and $(M_{\mathbf i'})_{i_1, \mathrm{in}}$ is surjective by \Cref{lemma: locally free and rigid implies surjective};
    \item [(1.3)] $i_1$ is a source of $s_{i_1}(\Omega)$ and $(M_{\mathbf i'})_{i_1, \mathrm{out}}$ is injective by \Cref{lemma: locally free and rigid implies surjective}.
\end{enumerate}
In case (2), there are two sub-cases
\begin{enumerate}
    \item [(2.1)] $i = i_1$ and thus $F_{i_1}^\pm(M_\mathbf i) = E_i$;
    \item [(2.2)] $i \neq i_1$ and thus $F_{i_1}^\pm (M_\mathbf i) = x_i$.
\end{enumerate}
By \Cref{cor: recursion of cc function general}, it is easy to check that in each of the sub-cases we have
\[
X_{M_\mathbf i}(x_1, \dots, x_{i_1},\dots, x_n) = X_{M_{\mathbf i'}} (x_1', \dots, x_{i_1}', \dots, x_n')
\]
where $x_i' = x_i$ if $i\neq i_1$ and $x_{i_1}' = x_{i_1}^{-1}\left(\prod_{j} x_j^{[b_{ji_1}]_+} + \prod_{j} x_j^{[-b_{ji_1}]_+}\right)$ and by \Cref{prop: reflection preserve locally free and rigid} if $M_\mathbf i = F_{i_1}^\pm (M_{\mathbf i'})$ is a module, it is locally free, indecomposable and rigid. In all cases, we have
\[
X_{M_\mathbf i}(x_1, \dots, x_{i_1}, \dots, x_n) = x_{\mathbf i'}(x_1', \dots, x_{i_1}', \dots, x_n') = x_\mathbf i\in \mathcal A(B),
\]
which finishes the induction.

By the Caldero--Chapoton formula, the rank vector $\rk M$ is just the $\mathbf d$-vector of $x_\mathbf i$, thus only depending on $x_\mathbf i$; see \Cref{rmk: express cc function by f polynomial}. It is shown in \cite{geiss2018rigid} that any locally free indecomposable rigid module is determined by its rank vector (which in particular is a real Schur root of $(C, \Omega)$). Thus $M_\mathbf i$ is uniquely determined by $x_\mathbf i$.
\end{proof}

\bibliographystyle{amsalpha-fi-arxlast}
\bibliography{reference.bib}

\end{document}